\documentclass[12pt, reqno]{amsart}
\usepackage{amsmath, amsthm, amscd, amsfonts, amssymb, graphicx, xcolor}
\usepackage[bookmarksnumbered, colorlinks, plainpages]{hyperref}
\usepackage{mathrsfs}
\usepackage{tikz}
\usepackage{booktabs}
\usepackage{appendix}

\usetikzlibrary{arrows.meta, calc, decorations.pathreplacing, positioning}

\newtheorem{theorem}{Theorem}[section]
\newtheorem{lemma}[theorem]{Lemma}

\newtheorem{proposition}[theorem]{Proposition}
\newtheorem{corollary}[theorem]{Corollary}
\theoremstyle{definition}
\newtheorem{definition}[theorem]{Definition}
\newtheorem{example}[theorem]{Example}

\theoremstyle{remark}
\newtheorem{remark}[theorem]{Remark}
\numberwithin{equation}{section}

\newcommand{\unlinked}{\mathbin{\triangleright\!\triangleleft}}
\newcommand{\duto}{\stackrel{d_u}\rightarrow}

\begin{document}
\setcounter{page}{1}

\centerline{}

\centerline{}

\title[Geometrically Infinite Convergence Group Actions]{Topological Characterizations of Geometrically Infinite Convergence Group Actions and Orbit Uniform Metrics}

\author{Chaodong Yang}	
 \address{Beijing International Center for Mathematical Research\\
Peking University\\
 Beijing 100871, China
P.R.}
 \email{be2sio4@stu.pku.edu.cn}

\date{August 5th 2026}

\keywords{Geometric finiteness, convergence group action, relatively hyperbolic group}

\maketitle

\begin{abstract}
Two characterizations of geometric finiteness in terms of weaker conditions are known for actions on Gromov hyperbolic spaces. In this paper, we establish analogous characterizations for general convergence group actions. To this end, we introduce the notion of an orbit uniform metric. We prove that a point is either conical or bounded parabolic if and only if its orbit is discrete with respect to an orbit uniform metric. As a consequence, we characterize geometric infiniteness in terms of the uncountability of the set of non-conical limit points. We further characterize geometric infiniteness by the existence of escaping sequences of hyperbolic elements, thereby extending the corresponding result for actions on Gromov hyperbolic spaces.
\end{abstract}

\section{Introduction}
Suppose that a group $G$ admits a convergence action on a compact metrizable space $M$. The space $M$ naturally decomposes into the limit set $\Lambda G$ and the domain of discontinuity $\Omega G=M\setminus\Lambda G$. Such an action is said to be geometrically finite if every point of $\Lambda G$ is either conical or bounded parabolic. Otherwise, the action is said to be geometrically infinite.

The notion of geometric finiteness originated in the work of Ahlfors \cite{Ahl66} on Kleinian groups. Beardon and Maskit \cite{BM74} later proved that, for Kleinian groups, geometric finiteness is equivalent to the condition that every limit point is either conical or bounded parabolic. This dynamical characterization was subsequently extended by Bowditch \cite{Bow99} to proper isometric actions on Gromov hyperbolic spaces. In each of these settings, geometric finiteness has a natural geometric interpretation: there exists a $G$-invariant family of pairwise disjoint horoballs such that $G$ acts cocompactly on the complement of their union in the convex hull of the limit set. This geometric picture also underlies Bowditch's definition of relatively hyperbolic groups.

Although geometric finiteness was originally studied for actions on Gromov hyperbolic spaces, the notions of conical and bounded parabolic points depend only on boundary dynamics. Yaman \cite{Yam04} showed that every geometrically finite convergence action is induced by a geometrically finite action on a proper hyperbolic space. In contrast, for geometrically infinite actions it remains unknown whether every convergence action admits such a realization.

Geometric finiteness admits several characterizations in terms of conditions that appear substantially weaker than the defining condition. Beginning with the work of Bonahon \cite{Bon86} and continuing with subsequent work of Bishop \cite{Bis96}, Kapovich--Liu \cite{KL19}, and Yang--Yang \cite{YY26}, it has been shown that, for proper actions on Gromov hyperbolic spaces, geometric finiteness is equivalent to each of the following conditions: 
\begin{itemize}
    \item[(1)] there is no escaping sequence of hyperbolic elements;
    \item[(2)] the set of non-conical limit points is countable.
\end{itemize}

In the hyperbolic setting, where $G$ acts properly on a Gromov hyperbolic space $X$, a sequence $\{g_n\}$ of hyperbolic elements in $G$ is \textit{escaping} if
$$
\lim_{n\to\infty} d_X(\mathrm{Axis}(g_n), Go)=\infty,
$$
where $\mathrm{Axis}(g_n)$ is an axis of $g_n$.

Yang and Yang \cite{YY26} asked whether the above two characterizations extend to arbitrary convergence group actions. Theorems~\ref{geomInfiniteEscapingGeodesic} and~\ref{geomInfiniteUncountable} below answer this question affirmatively. 

We first introduce a boundary formulation of escaping sequences of hyperbolic elements, which is applicable without assuming the existence of an ambient Gromov hyperbolic space.
Given a convergence group action of $G$ on $M$, we call a sequence $\{g_n\}$ of hyperbolic elements in $G$ \textit{escaping} if 
$$
\lim_{n\to\infty} \sup_{h\in G}d(hg_n^+, hg_n^-)=0,
$$
where $d$ is a compatible metric on $M$, and $g_n^+, g_n^-$ are the attracting and repelling fixed points of $g_n$, respectively.

We prove the following two theorems.

\begin{theorem}\label{geomInfiniteEscapingGeodesic}
    Suppose that a group $G$ admits a non-elementary convergence action on a compact metrizable space $M$.
    Then the action is geometrically infinite if and only if there exists an escaping sequence of hyperbolic elements in $G$.
\end{theorem}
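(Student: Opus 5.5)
We prove the two directions separately, working with the intrinsic notion of escaping sequences from the introduction.

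\textbf{Escaping sequence implies geometric infiniteness.} We argue by contraposition. Assume the action is geometrically finite. By Yaman's theorem \cite{Yam04}, the action is induced by a geometrically finite proper action on a proper hyperbolic space $X$, with $M \cong \partial X$ equivariantly. We may assume $\Lambda G = M$ after restricting; escaping is only easier to test on $\Lambda G$, since all fixed points lie there. Fix a $G$-invariant family of disjoint horoballs such that $G$ acts cocompactly on the thick part $X_0$ of the hull.

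Let $g$ be hyperbolic. Its axis cannot lie in a single horoball, because loxodromics do not stabilize horoballs. Hence the axis meets $X_0$ up to a uniform constant, and so it passes within a bounded distance $R$ of $Go$. Translating by some $h \in G$, the axis of $hgh^{-1}$ passes within $R$ of $o$. Its endpoints $hg^+$ and $hg^-$ are then separated by a visual metric (based at $o$) by at least some $\epsilon(R) > 0$. Since all compatible metrics on the compact space $M$ are uniformly equivalent, $\sup_h d(hg^+, hg^-) \ge \epsilon' > 0$ uniformly in $g$. So no escaping sequence exists.

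\textbf{Geometric infiniteness implies an escaping sequence.} Suppose some $\xi \in \Lambda G$ is neither conical nor bounded parabolic. I would use the paper's orbit uniform metric result (a point is conical or bounded parabolic iff its orbit is discrete for an orbit uniform metric), stated later in the paper.

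1. Non-discreteness gives $h_n \in G$ with $h_n\xi \to \xi$ and $h_n\xi \neq \xi$, with uniform control over all translates. This is the role of the orbit uniform metric: closeness of $h_n\xi$ and $\xi$ is robust under applying any $h \in G$.
2. Use non-conicality of $\xi$: there is no sequence $g_k$ and distinct points $a \ne b$ with $g_k\xi \to a$ and $g_k\eta \to b$ for all $\eta \ne \xi$.
3. Produce hyperbolic elements whose fixed points are both near $\xi$ in the uniform sense. Take $f_n = h_n k$ or a product $h_n k h_n^{-1} k^{-1}$-type element, with $k$ a fixed hyperbolic element chosen in general position. Run ping-pong in $M$ to get that $f_n$ is hyperbolic with $f_n^\pm$ near $h_n\xi$ and $\xi$ respectively. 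Then $\sup_h d(hf_n^+, hf_n^-)$ is controlled by the orbit uniform metric distance between $h_n\xi$ and $\xi$, which tends to $0$.

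\textbf{Main obstacle.} The key step is turning ``$d(h f_n^+, h f_n^-)$ small for every $h$'' into a property of $\xi$. Ordinary metric closeness is destroyed by the action: some $h$ blows up a neighbourhood of $\xi$. Non-conicality at $\xi$ must prevent this, since a sequence of group elements separating nearby pairs arbitrarily close to $\xi$ would, after passing to a limit, exhibit $\xi$ as conical. I would first prove the following lemma by the convergence property and a diagonal argument:

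\emph{If $\xi$ is not conical, then for every $\epsilon > 0$ there is a neighbourhood $U$ of $\xi$ such that every pair $x, y \in U$ lying in the closure of $G\xi$ satisfies $d(hx, hy) < \epsilon$ for all $h$.}

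Ensuring that the hyperbolic fixed points lie in such a controlled set, for instance by choosing $f_n$ in a conjugate of a stabilizer-related construction, is where I expect the real work.
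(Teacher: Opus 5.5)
Your argument that geometric finiteness rules out escaping sequences is sound. It takes a longer route than the paper, which uses $2$-cocompactness (Lemma~\ref{2cocompact}): if every pair of distinct limit points lies in $GK$ for a compact $K\subset\Theta^2(M)$, then $d_u(g^+,g^-)\ge\min_{\{x,y\}\in K}d(x,y)>0$ for every hyperbolic $g$. (Your reason why an axis leaves each horoball should be that a horoball accumulates only at its centre, not that loxodromics do not stabilize horoballs.)

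The converse direction has a genuine gap, and the tools you propose for it fail.

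\emph{The candidate elements are not escaping.} Since $d_u(h_n\xi,\xi)\to0$ and $\xi$ is non-conical, the argument of Proposition~\ref{generalDuConvergence}(2) shows that $\{h_n\}$ converges with attracting and repelling point both $\xi$. Hence $f_n=h_nk$ has attracting point $\xi$ and repelling point $k^{-1}\xi\neq\xi$. For large $n$ it is hyperbolic with $f_n^+\to\xi$ and $f_n^-\to k^{-1}\xi$, not near $h_n\xi$ and $\xi$ as you claim. Likewise, by Lemma~\ref{hyperbolicElementFixPointConvergence}, $h_nkh_n^{-1}k^{-1}$ has fixed points tending to $\xi$ and $k\xi$. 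In both cases $d_u(f_n^+,f_n^-)\ge d(f_n^+,f_n^-)$ stays bounded away from $0$: a fixed auxiliary $k$ builds in a fixed amount of separation.

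\emph{Your bridging lemma is false.} Fix a hyperbolic $k$, and by minimality choose $f$ with $fk^+\in U$. Put $x=fk^+$ and $y=\xi$; both lie in $\overline{G\xi}=\Lambda G$, and they are distinct because $\xi$ is not conical. Then $k^{-n}f^{-1}x=k^+$ while $k^{-n}f^{-1}y\to k^-$, so $d_u(x,y)\ge d(k^+,k^-)$ however small $U$ is. Restricting to pairs in $G\xi$ does not help: take $x=\xi$ and $y=fk^mf^{-1}\xi$ with $m$ large. The heuristic fails because the separating elements expand near the conical point $x$, not near $\xi$. More fundamentally, $d$-closeness never yields $d_u$-closeness, so locating fixed points $d$-near anything cannot bound $\sup_h d(hf_n^+,hf_n^-)$.

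\emph{The missing idea is to bound displacement rather than locate fixed points.} The paper sets $\tau_u(g)=\inf_{\zeta\neq g^\pm}d_u(\zeta,g\zeta)$ and notes $\tau_u(g)\ge\sup_h\tau(hgh^{-1})$, where $\tau$ is the topological translation length. It then shows that if $d_u(g^+,g^-)>\varepsilon$ for all hyperbolic $g$, then $\tau_u\ge r'>0$ uniformly. The reason: a conjugate $g_1$ with $d(g_1^+,g_1^-)\ge\varepsilon$ and $\tau(g_1)\le\varepsilon/20$ satisfies $g_1X\cap X\neq\varnothing$ for a fixed compact $X\subset\Theta^3(M)$. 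Properness therefore allows only finitely many such conjugacy classes, and each has $\tau>0$. Then, with no auxiliary element, Lemma~\ref{producingShortHyperbolicElement} makes $g=h_{n_j}h_{n_i}$ hyperbolic, and $\tau_u(g)\le d_u(\xi,g\xi)\le d_u(\xi,h_{n_j}\xi)+d_u(\xi,h_{n_i}\xi)<r'$, a contradiction. So geometric infiniteness forces $\inf_g d_u(g^+,g^-)=0$, which is exactly an escaping sequence.
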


\begin{theorem}\label{geomInfiniteUncountable}
    Suppose that a group $G$ admits a non-elementary convergence action on a compact metrizable space $M$.
    Then the action is geometrically infinite if and only if the set of non-conical limit points is uncountable.
\end{theorem}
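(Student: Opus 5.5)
The easy direction is: if the set of non-conical limit points is uncountable, then the action is geometrically infinite. I would prove it by contrapositive. Suppose the action is geometrically finite. Then every non-conical limit point is bounded parabolic. By the main structural result announced in the abstract, a point is conical or bounded parabolic if and only if its orbit is discrete with respect to an orbit uniform metric. I would combine this with the standard fact, due to Tukia/Bowditch, that a geometrically finite convergence action has only finitely many orbits of bounded parabolic points. Each orbit is a $G$-orbit and $G$ is countable, since convergence groups acting on compact metrizable spaces are countable. Hence the non-conical limit set is a finite union of countable orbits, so it is countable.

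For the converse, assume the action is geometrically infinite and fix a limit point $\xi\in\Lambda G$ that is neither conical nor bounded parabolic. By the orbit uniform metric characterization, the orbit $G\xi$ is not discrete with respect to an orbit uniform metric $\rho$. So there are elements $g_n$ with $g_n\xi\neq\xi$ and $g_n\xi\to\xi$ in a uniform sense: for every $h\in G$, $\rho(hg_n\xi,h\xi)\to 0$. I would use this non-discreteness to run a Cantor-set construction. Fix a sequence $\epsilon_k\to0$ decaying very fast. At each stage I would build finite words $w=g_{n_1}^{\pm}\cdots g_{n_k}^{\pm}$, choosing the indices so that the points $w\xi$ stay separated at scale $\epsilon_k$, yet every infinite branch converges. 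This gives an injection $\{0,1\}^{\mathbb N}\to\Lambda G$ sending a branch $\omega$ to $\lim_k w_k(\omega)\xi$. Uniformity in $h$ is exactly what lets the separation estimates survive translation by the previously chosen prefix $w$. That is the reason for working with an orbit uniform metric rather than merely a compatible one.

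The key remaining step, and the one I expect to be the main obstacle, is ensuring that all but countably many of the limit points $\lim_k w_k(\omega)\xi$ are non-conical. My plan is to choose the $n_k$ so large that each limit point stays uniformly $\rho$-close to the orbit $G\xi$ at every scale. Conicality of a point $\eta$ gives a sequence $\gamma_j$ with $\gamma_j\eta\to a$ and $\gamma_j x\to b\neq a$ for the other points $x$. Closeness to $\xi$ uniformly after translation would then force $\gamma_j\xi$ to behave like $\gamma_j\eta$. That would make $\xi$ conical too, or at least contradict the "uniformly close under all $h$" estimate, since conical points are exactly those where translations blow up small neighborhoods. If every branch point turns out non-conical, the non-conical set contains a Cantor set and is uncountable. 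Otherwise I would fall back on the cardinality argument: conical limit points of the constructed set that lie in one orbit $G\eta$ are at most countable. So I would aim to show that conical points among the branch limits lie in only countably many orbits, for instance as the fixed points of the $w_k$ when these are loxodromic. That would leave uncountably many non-conical points.
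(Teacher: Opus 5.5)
Your first direction is fine and matches the paper. The paper simply asserts that bounded parabolic points are countable, and finitely many parabolic orbits plus countability of $G$ is a valid justification; the appeal to the orbit uniform characterization there is superfluous.

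The converse has a gap exactly at the step you flag: you never show that the branch limits are non-conical. Of your sketches, only the vague ``contradict the uniformly close estimate'' points the right way. Conicality of a limit $\eta$ does not make $\xi$ conical; it only yields $\gamma_j\xi\to b$. The fallback of placing conical branch limits in countably many orbits, say as fixed points of the $w_k$, has no support: nothing makes the $w_k$ hyperbolic, and a branch limit need not be fixed by any of them. You also never say in which metric the branches converge. If they converge only in $d$, conical limits cannot be excluded, since conical points are $d$-dense in $\Lambda G$.

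Two short facts close the gap.

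(i) The branches converge in $d_u$. Choose $g_{n_k}$ with $r_k:=d_u(\xi,g_{n_k}\xi)>0$ and $r_{k+1}<r_k/4$; this is possible because $\xi$ is a $d_u$-accumulation point of $G\xi$. Put $w_k(\omega)=g_{n_1}^{\omega_1}\cdots g_{n_k}^{\omega_k}$ with $\omega_i\in\{0,1\}$. By $G$-invariance of $d_u$, $d_u(w_k(\omega)\xi,w_{k+1}(\omega)\xi)\le r_{k+1}$, so the sequence is $d_u$-Cauchy. Its $d$-limit $\eta_\omega\in\Lambda G$ satisfies $d(hw_k(\omega)\xi,h\eta_\omega)=\lim_m d(hw_k(\omega)\xi,hw_m(\omega)\xi)\le\sum_{i>k}r_i$ for every $h\in G$, which is the argument of Theorem~\ref{dGComplete}. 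Branches first differing at index $k$ then give $d_u(\eta_\omega,\eta_{\omega'})\ge r_k-2\sum_{i>k}r_i>r_k/3$.

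(ii) A conical point is $d_u$-isolated in $M$. If $\{\gamma_j\}$ has repelling point $\eta$, attracting point $b$, and $\gamma_j\eta\to a\ne b$, then $d_u(\zeta,\eta)\ge\lim_j d(\gamma_j\zeta,\gamma_j\eta)=d(b,a)$ for all $\zeta\ne\eta$.

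Hence a conical $\eta_\omega$ would equal $w_k(\omega)\xi$ for large $k$, and so would lie in $G\xi$, which contains no conical points. Thus every branch limit is non-conical, not just all but countably many, and $\omega\mapsto\eta_\omega$ injects $\{0,1\}^{\mathbb N}$ into $\Lambda^{nc}G$.

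With these facts supplied, your Cantor scheme is the standard proof that a nonempty perfect complete metric space has the cardinality of the continuum. That is exactly how the paper argues: $\Lambda^{ne}G$ is $d_u$-perfect by Corollary~\ref{conicalOrBoundedParabolic}, it is $d_u$-closed in $\Lambda G$ because expansive points are $d_u$-isolated, and $d_u$ is complete by Theorem~\ref{dGComplete}.
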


A fundamental result of Gerasimov \cite{Ger09} states that, for convergence group actions on arbitrary compacta, $2$-cocompactness is equivalent to geometric finiteness. To establish this result, Gerasimov developed the topological visibility theory and characterized conical and bounded parabolic points under the assumption of $2$-cocompactness. We observe that the proof of this characterization only requires the $2$-cocompactness of an orbit, rather than of the entire action. Consequently, the characterization extends to arbitrary convergence group actions, without assuming geometric finiteness.

This observation yields the following theorem. Given a set $X$, let $\Theta^2(X)$ be the collection of subsets of $X$ with cardinality $2$.

\begin{theorem}\label{topologicalVersion}
Let $G$ be a group admitting a topologically minimal non-elementary convergence action on a compactum $M$. Given $\xi\in M$, the following are equivalent:
\begin{itemize}
    \item[(1)] $\xi$ is either conical or bounded parabolic.
    \item[(2)] There exists a compact subset $K\subset\Theta^2(M)$ such that $\{\xi,\eta\}\in GK$ for every $\eta\in M\setminus\{\xi\}$.
    \item[(3)] There exists a compact subset $K\subset\Theta^2(M)$ such that $\Theta^2(G\xi)\subset GK$.
\end{itemize}
\end{theorem}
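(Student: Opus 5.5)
We will prove $(1)\Rightarrow(2)\Rightarrow(3)\Rightarrow(1)$. Since the action is topologically minimal and non-elementary, $M=\Lambda G$ is perfect, and $G$ acts properly discontinuously on the space of distinct triples $\Theta^3(M)$.

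\emph{$(1)\Rightarrow(2)$.} First suppose $\xi$ is conical. Then there are a sequence $g_n\in G$ and distinct points $a\neq b$ with $g_n\xi\to a$ and $g_n\eta\to b$ for all $\eta\neq\xi$. We argue by contradiction and suppose no compact $K$ works. Exhausting $\Theta^2(M)$ by compact sets $K_m=\{\{x,y\}: d(x,y)\ge 1/m\}$, we obtain $\eta_m\neq\xi$ such that $d(h\xi,h\eta_m)<1/m$ for all $h\in G$. Fix a third point $z$. Apply the conical sequence to the triple $(\xi,\eta_m,z)$ and use properness on triples to produce group elements that keep $\xi$ and $\eta_m$ uniformly apart. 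This is the standard argument that a conical point sees every other point through a uniformly bounded ``annulus''. If this is delicate, we will instead use Gerasimov's visibility statement: a triple orbit $G\{\xi,\eta,z\}$ meets a fixed compact set of triples, by conicality and properness, and its projection to pairs yields the required $K$.

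Now suppose $\xi$ is bounded parabolic with stabilizer $P$. Then $P$ acts cocompactly on $M\setminus\{\xi\}$; choose a compact $L\subset M\setminus\{\xi\}$ with $PL=M\setminus\{\xi\}$, and set $K=\{\{\xi,y\}:y\in L\}$. This set is compact in $\Theta^2(M)$, and for every $\eta\ne\xi$ we have $\{\xi,\eta\}\in PK\subset GK$.

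\emph{$(2)\Rightarrow(3)$.} Let $\{g\xi,h\xi\}\in\Theta^2(G\xi)$. Then $\{\xi,g^{-1}h\xi\}\in GK$, so $\{g\xi,h\xi\}\in GK$.

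\emph{$(3)\Rightarrow(1)$.} This is the main step, and here we follow Gerasimov's proof that $2$-cocompactness implies geometric finiteness, checking that only pairs from $G\xi$ are used. Suppose $\xi$ is not conical. We will show that $\xi$ is a parabolic fixed point with $P=\mathrm{Stab}(\xi)$ acting cocompactly on $M\setminus\{\xi\}$.

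The first step is to use $K$ to define an ``orbit distance''. For $x,y\in G\xi$, declare them $K$-close when $\{x,y\}\in gK$. Fix $\eta\in M\setminus\{\xi\}$; by minimality, $\eta$ is a limit of points $g_n\xi$. Each pair $\{\xi,g_n\xi\}$ lies in $h_nK$. If the elements $h_n$ can be taken from a finite set of cosets of $P$, then $\eta\in PL$ for a compact $L$ obtained from $K$ by projection away from $\xi$, provided pairs in $K$ stay uniformly apart. Otherwise $h_n\to\infty$, and passing to a convergence subsequence gives $h_n^{-1}\to$ collapsing behaviour. Since $h_n^{-1}\{\xi,g_n\xi\}\in K$ consists of uniformly separated pairs, the point $\xi$ does not go to the attracting point under $h_n^{-1}$. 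Combined with a third point from the orbit, this forces the conical configuration for $\xi$, contradicting the assumption.

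The main obstacle is this dichotomy: making precise that an unbounded family $h_n$ yields conicality, via properness on $\Theta^3$ applied to triples $(\xi,g_n\xi,x)$ with $x\in G\xi$. We also need to show that $P$ is infinite, which follows since non-conical $\xi$ in a perfect limit set must have infinitely many $h_n$ fixing it.
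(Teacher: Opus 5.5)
\textbf{Gap in $(3)\Rightarrow(1)$: the dichotomy you flag fails.} Your implications $(1)\Rightarrow(2)$ and $(2)\Rightarrow(3)$ are fine in substance. For the conical case you need no contradiction argument, metric, or properness on triples. If $g_n\xi\to a$ and $g_n\eta\to b\neq a$ for all $\eta\ne\xi$, then any compact neighbourhood $K$ of $\{a,b\}$ in $\Theta^2(M)$ eventually contains $\{g_n\xi,g_n\eta\}$, so $\{\xi,\eta\}\in GK$. This is the paper's argument, and it avoids choosing a metric on a general compactum.

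The elements $h_n$ with $\{\xi,g_n\xi\}\in h_nK$ are far from unique, and your second branch argues with one arbitrary choice of them. Uniform separation of $h_n^{-1}\xi$ and $h_n^{-1}g_n\xi$ only says that \emph{one} of them avoids the attracting point $b$ of $\{h_n^{-1}\}$. If it is $h_n^{-1}\xi$, then $\xi$ is the repelling point and is conical, as you want. But it may instead be $h_n^{-1}g_n\xi$. In that case the repelling point is $\eta=\lim g_n\xi$ and $h_n^{-1}\xi\to b$, with no contradiction.

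This really happens. Take the cusp $\xi=\infty$ of $\mathrm{PSL}_2(\mathbb{Z})$ acting on $\partial\mathbb{H}^2$. Let $K$ be the endpoint pairs of geodesics meeting a suitable compact $C\subset\mathbb{H}^2$, and let $g_n\xi=p_n/q_n\to\eta$. You may choose $h_n$ so that $h_nC$ meets the vertical geodesic over $p_n/q_n$ just above the horoball at $p_n/q_n$, at height comparable to $q_n^{-2}$. These $h_n$ leave every finite union of cosets $\mathrm{Stab}_G(\xi)f$, yet $\xi$ is bounded parabolic. So your branch-two reasoning would ``prove'' that a cusp is conical. Separately, even your first branch needs the finite set of cosets to be independent of $\eta$ to produce a single compact fundamental set.

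\textbf{What is missing.} You need a canonical choice of $h$ ``closest to $\xi$'', together with a proof that these choices fall into finitely many $\mathrm{Stab}_G(\xi)$-cosets. That is the whole content of the paper's proof, and it needs the betweenness structure of Gerasimov's visibility theory, not just the convergence property. The paper's steps are:
\begin{itemize}
\item The horosphere $H_k(\xi)\subset A=G\textbf{e}$ plays the role of the closest entourages.
\item Hypothesis (3) enters through the separation Lemma~\ref{separatingOrbit}.
\item Combined with Lemma~\ref{H2} and discreteness of $A$ (Lemma~\ref{orbitDiscrete}), this gives Lemma~\ref{finitenessHoroball}, hence $H_k(\xi)/\mathrm{Stab}_G(\xi)$ is finite.
\item Lemmas~\ref{H1} and~\ref{T2} replace any entourage separating $\xi$ from $g\xi$ by a horospherical one. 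This yields $G\xi\setminus\{\xi\}\subset \mathrm{Stab}_G(\xi)K'$ with $\overline{K'}\subset M\setminus\{\xi\}$.
\end{itemize}

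\textbf{Infiniteness of the stabilizer.} Your justification that $\mathrm{Stab}_G(\xi)$ is infinite is false as stated. For geometrically infinite Kleinian groups, uncountably many non-conical limit points have trivial stabilizer. In the paper, infiniteness comes from the inclusion above plus density of $G\xi$, which puts $\xi$ in $\overline{\mathrm{Stab}_G(\xi)K'}$. Non-conicality then gives $\Lambda\,\mathrm{Stab}_G(\xi)=\{\xi\}$, and proper discontinuity gives boundedness.
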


Although Theorem~\ref{topologicalVersion} holds for arbitrary compacta, all subsequent results are stated in the metrizable setting, where we define the orbit uniform metric. This metric serves as the main tool for deriving further geometric consequences, including the characterizations of geometrically infinite convergence actions in Theorems~\ref{geomInfiniteEscapingGeodesic} and \ref{geomInfiniteUncountable}. 

Given any compatible metric $d$ on $M$, the corresponding orbit uniform metric is defined by
$$
d_u(\xi,\eta)=\sup_{g\in G} d(g\xi,g\eta).
$$

The orbit uniform metric enables us to reformulate Theorem~\ref{topologicalVersion} in metric terms. More precisely, when $M$ is metrizable, the topological characterization above takes the following form.

\begin{corollary}\label{conicalOrBoundedParabolic}
    Suppose that a group $G$ admits a non-elementary convergence action on a compact space $M$.
    Given $\xi\in \Lambda G$, the following statements are equivalent:
    \begin{itemize}
        \item[(1)] $\xi$ is either conical or bounded parabolic.
        \item[(2)] $\xi$ is $d_u$-isolated in $\Lambda G$.
        \item[(3)] The orbit $G\xi$ is $d_u$-discrete.
    \end{itemize}
\end{corollary}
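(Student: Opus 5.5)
Our plan is to reduce Corollary~\ref{conicalOrBoundedParabolic} to Theorem~\ref{topologicalVersion}, applied to the induced action of $G$ on $\Lambda G$. That action is minimal and non-elementary. Being conical or bounded parabolic is intrinsic to the action on $\Lambda G$; for parabolic points one checks that cocompactness of the stabilizer on $M\setminus\{\xi\}$ is equivalent to cocompactness on $\Lambda G\setminus\{\xi\}$, using properness on $\Omega G$ or the standard argument. We may therefore assume $M=\Lambda G$.

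The key dictionary is the following. For $\epsilon>0$, let $K_\epsilon=\{\{x,y\}\in\Theta^2(M): d(x,y)\ge\epsilon\}$; this set is compact. Then $\{\xi,\eta\}\in GK_\epsilon$ if and only if $d(g\xi,g\eta)\ge\epsilon$ for some $g\in G$, which by definition of $d_u$ holds for some $\epsilon'$ slightly below $d_u(\xi,\eta)$. Conversely, any compact $K\subset\Theta^2(M)$ is contained in some $K_\epsilon$, since $d$ is continuous and positive on $\Theta^2(M)$. Hence:
\begin{itemize}
\item condition (2) of Theorem~\ref{topologicalVersion} holds if and only if there is $\epsilon>0$ with $d_u(\xi,\eta)\ge\epsilon$ for all $\eta\ne\xi$, that is, $\xi$ is $d_u$-isolated in $M=\Lambda G$;
\item condition (3) holds if and only if there is $\epsilon>0$ with $d_u(g\xi,h\xi)\ge\epsilon$ for all distinct points $g\xi\ne h\xi$ of the orbit, that is, $G\xi$ is uniformly $d_u$-discrete.
\end{itemize}

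This gives (1)$\Leftrightarrow$(2) directly. The main obstacle is matching "uniformly discrete" with "discrete" in (3). Since $d_u$ is $G$-invariant, each $g$ acts as a $d_u$-isometry. Thus if $\xi$ is $d_u$-isolated within $G\xi$, say $d_u(\xi,h\xi)\ge\epsilon$ whenever $h\xi\ne\xi$, then every orbit point is isolated with the same $\epsilon$: $d_u(g\xi,h\xi)=d_u(\xi,g^{-1}h\xi)\ge\epsilon$. Hence $d_u$-discreteness of $G\xi$ (each point isolated in the orbit) already implies uniform discreteness, giving (3)$\Leftrightarrow$ condition (3) of the theorem.

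Finally, (2)$\Rightarrow$(3) is trivial, since $G\xi\subset\Lambda G$. In any case all three statements are equivalent to (1) via Theorem~\ref{topologicalVersion}. The only remaining care is confirming that $d_u$ is a genuine metric, with values at most $\operatorname{diam}M$, and that the passage from $M$ to $\Lambda G$ preserves the notions involved.
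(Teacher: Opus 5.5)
Your core argument is the paper's own: the paper obtains this corollary simply by reading conditions (2) and (3) of Theorem~\ref{topologicalVersion} in terms of $d_u$. Your dictionary via $K_\epsilon=\{\{x,y\}\in\Theta^2(M): d(x,y)\ge\epsilon\}$ is correct, and so is your remark that $G$ acts by $d_u$-isometries, which makes $d_u$-discreteness of $G\xi$ automatically uniform. Together these supply the details the paper leaves implicit. The step that fails is your reduction to $M=\Lambda G$. It is not true that cocompactness of $G_\xi$ on $\Lambda G\setminus\{\xi\}$ implies cocompactness on $M\setminus\{\xi\}$, and properness on $\Omega G$ gives nothing of the sort. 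For $G=\mathrm{PSL}_2(\mathbb{Z})$ acting on $M=\hat{\mathbb{C}}$ one has $\Lambda G=\hat{\mathbb{R}}$, and $G_\infty=\{z\mapsto z+n\}$ acts cocompactly on $\mathbb{R}$ but not on $\mathbb{C}$, since the quotient is an open cylinder.

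Only the restriction direction holds. So your argument does give (1)$\Rightarrow$(2)$\Rightarrow$(3) in general, but (3)$\Rightarrow$(1) only yields ``bounded parabolic relative to $\Lambda G$''. No argument can close this gap. In the example, the action on $\hat{\mathbb{R}}$ is geometrically finite and hence $2$-cocompact by Lemma~\ref{2cocompact}. Therefore $\infty$ is $d_u$-isolated in $\Lambda G$ and $G\infty$ is $d_u$-discrete, while $\infty$ is neither conical nor bounded parabolic in the sense of Definition~\ref{defn:limitpoints} for $M=\hat{\mathbb{C}}$. The statement therefore has to be read in one of two ways: with $M=\Lambda G$, which is the minimal setting of Theorem~\ref{topologicalVersion} and what the paper tacitly assumes, or with bounded parabolicity defined via $\Lambda G\setminus\{\xi\}$ (Bowditch's convention). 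You should impose one of these restrictions explicitly instead of asserting the equivalence. With that restriction your proof is complete.
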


Since the non-discrete points of the orbit uniform metric are precisely the limit points which are neither conical nor bounded parabolic, the characterization of geometric finiteness by countability of non-conical limit points suggests studying the completeness of the orbit uniform metrics. Our next theorem shows that it is always complete.

\begin{theorem}\label{dGComplete}
    Suppose that a group $G$ admits a convergence action on a compact metrizable space $M$. Then every orbit uniform metric is complete.
\end{theorem}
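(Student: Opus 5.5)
The plan is to work directly with the definition $d_u(\xi,\eta)=\sup_{g\in G} d(g\xi,g\eta)$ on $M$, where $d$ is a fixed compatible metric. We use compactness of $(M,d)$ to produce a candidate limit, and then upgrade $d$-convergence to $d_u$-convergence by a limiting argument applied separately to each $g$.

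First I would record the basic properties of $d_u$. Since $g=\mathrm{id}$ occurs in the supremum, $d\le d_u$, so $d_u$ separates points. The symmetry and the triangle inequality pass to the supremum from the corresponding properties of each pseudometric $(\xi,\eta)\mapsto d(g\xi,g\eta)$. Also $d_u\le \operatorname{diam}_d M<\infty$, so $d_u$ is a genuine finite metric on $M$. It is typically not compatible with the topology of $M$: it can be strictly finer, which is exactly why the $d_u$-isolated points in Corollary~\ref{conicalOrBoundedParabolic} are meaningful. So completeness has real content and is not a consequence of compactness alone.

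Next, let $\{\xi_n\}$ be $d_u$-Cauchy. Because $d\le d_u$, it is also $d$-Cauchy. By compactness of $M$ it therefore $d$-converges to some $\xi\in M$. Fix $\varepsilon>0$ and choose $N$ with $d_u(\xi_n,\xi_m)<\varepsilon$ for all $n,m\ge N$. For any $g\in G$ and $n,m\ge N$ this gives $d(g\xi_n,g\xi_m)<\varepsilon$. Each $g$ acts by a homeomorphism, so $g\xi_m\to g\xi$ in $d$ as $m\to\infty$. Letting $m\to\infty$ yields $d(g\xi_n,g\xi)\le\varepsilon$ for every $n\ge N$.

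The key point is that $N$ was chosen before $g$, so this bound is uniform in $g$. Taking the supremum over $g$ gives $d_u(\xi_n,\xi)\le\varepsilon$ for all $n\ge N$, hence $\xi_n\to\xi$ in $d_u$.

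The main obstacle I anticipate is precisely this interchange of the limit in $m$ with the supremum over $G$. It is resolved by fixing $g$ first, passing to the limit in $m$, and only afterwards taking the supremum. This works because the Cauchy estimate holds uniformly over all of $G$.

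On the hypotheses: the argument above needs only that $G$ acts by homeomorphisms on a compact metrizable space, and it does not use the convergence property. I would check whether the paper's intended statement concerns $d_u$ restricted to a subspace such as $\Lambda G$ or a single orbit $G\xi$. If it is $\Lambda G$, completeness still follows, because $\Lambda G$ is $d$-closed and hence also $d_u$-closed, and the limit $\xi$ found above then lies in $\Lambda G$. If the statement were about an orbit $G\xi$, it would be false in general. So the plan is to state and prove completeness on $M$ (and on $\Lambda G$), noting explicitly that the convergence hypothesis is not needed for this step.
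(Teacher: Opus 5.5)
Your proposal is correct and follows essentially the same route as the paper's proof. Like the paper, you pass from $d_u$-Cauchy to $d$-Cauchy, take the $d$-limit $\xi$ by compactness, let $m\to\infty$ for each fixed $g$ to get $d(g\xi_n,g\xi)\le\varepsilon$ uniformly in $g$, and then take the supremum. Your observation that the convergence property is never used is also accurate for the paper's argument.
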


Combining Corollary~\ref{conicalOrBoundedParabolic} and Theorem~\ref{dGComplete} provides a direct proof of Theorem~\ref{geomInfiniteUncountable}.

Although the overall strategy follows \cite{YY26}, the proof requires substantially different arguments because a general convergence action does not admit an ambient hyperbolic space. In particular, the translation-length techniques used in \cite{YY26} are no longer available. To overcome this, we introduce two notions of translation length adapted to convergence actions and establish the corresponding finiteness properties.

While Corollary~\ref{conicalOrBoundedParabolic} characterizes the local structure of the orbit uniform topology, it also leads to several global consequences. Beyond the characterizations above, the orbit uniform metric also has interesting intrinsic geometric properties. One example is the following density theorem.

\begin{theorem}\label{expansiveDense}
    The set of conical points and bounded parabolic points is $d_u$-dense in $\Lambda G$.
\end{theorem}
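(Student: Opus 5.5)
We argue by contradiction, using the structure of $\Lambda G$ given by Corollary~\ref{conicalOrBoundedParabolic}. Fix $\xi\in\Lambda G$ and $\varepsilon>0$. If $\xi$ is conical or bounded parabolic there is nothing to prove, so assume it is neither. Then Corollary~\ref{conicalOrBoundedParabolic}(2) says $\xi$ is not $d_u$-isolated in $\Lambda G$. Hence there are points $\eta_k\in\Lambda G\setminus\{\xi\}$ with
$$\delta_k:=d_u(\xi,\eta_k)=\sup_{h\in G}d(h\xi,h\eta_k)\to 0.$$
The plan is to produce, for large $k$, a hyperbolic element $g_k$ whose attracting fixed point $g_k^+$ is conical and satisfies $d_u(\xi,g_k^+)<\varepsilon$. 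We want $g_k^+$ to lie ``between'' $\xi$ and $\eta_k$, so that no element of $G$ can separate $\xi$ from $g_k^+$ by more than a controlled amount once it fails to separate $\xi$ from $\eta_k$.

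\textbf{Construction.} Since the action is non-elementary, pairs of fixed points of hyperbolic elements are dense in $\Lambda G\times\Lambda G$. So we choose hyperbolic $g_k$ with $g_k^+$ very close to $\xi$ and $g_k^-$ very close to $\eta_k$ in the original metric $d$. The freedom to replace $g_k$ by a high power, or by a conjugate $f g_k f^{-1}$ with $f$ ranging over a fixed finite ping-pong family, lets us also place $g_k^\pm$ inside prescribed small $d$-neighbourhoods of $\xi$ and $\eta_k$. Choosing the $d$-closeness much finer than $\delta_k$ should make the fixed points robust under every $h$ from any fixed finite subset of $G$.

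\textbf{Uniform control via the convergence property.} This is the main obstacle, since $d_u$ involves all of $G$, not a finite set. Suppose, for contradiction, that there are $h_k\in G$ with $d(h_k\xi,h_kg_k^+)\ge\varepsilon$ for all $k$.
\begin{itemize}
\item If the $h_k$ lie in a finite set, this contradicts the construction, since $g_k^+\to\xi$ in $d$ faster than any fixed finite set of homeomorphisms can separate.
\item Otherwise, pass to a collapsing subsequence $h_k\to$ $(a,b)$, meaning $h_k$ converges uniformly to the constant $a$ on compact subsets of $M\setminus\{b\}$. Because $h_k\xi$ and $h_kg_k^+$ stay $\varepsilon$-apart, both $\xi$ and $g_k^+$ cannot stay away from $b$. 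As $g_k^+\to\xi$, this forces $\xi=b$, with $\xi$ and $g_k^+$ approaching $b$ at comparable rates along different directions.
\item We then use $d(h_k\xi,h_k\eta_k)\le\delta_k\to0$ together with the dynamics of $g_k$. The element $g_k$ pushes a neighbourhood of $\eta_k$, namely $M$ minus a small neighbourhood of $g_k^-$, towards $g_k^+$. Comparing $h_k$ with $h_kg_k^{n}$ for suitable $n$, the attracting basin of $g_k^+$ contains points whose $h_k$-images are forced close to $h_k\xi$.
\end{itemize}
The hope is that a careful choice of the power of $g_k$ (made before choosing $h_k$, depending only on $\delta_k$) makes $h_k g_k^+$ a limit of such points. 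That would give $d(h_k\xi,h_kg_k^+)\lesssim\delta_k+o(1)<\varepsilon$, which is the desired contradiction.

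\textbf{Conclusion.} Attracting fixed points of hyperbolic elements are conical, so the conical points $g_k^+$ lie in the $d_u$-ball of radius $\varepsilon$ about $\xi$, proving Theorem~\ref{expansiveDense}.

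If the direct uniform estimate above resists, the fallback is to use completeness (Theorem~\ref{dGComplete}). We would build a $d_u$-Cauchy sequence of fixed points of hyperbolic elements converging to $\xi$, with successive elements obtained by the construction above. Each step only needs control over finitely many new ``dangerous'' $h$, namely those whose attracting/repelling data meet the current $\varepsilon$-scale; the convergence property guarantees there are finitely many such $h$ up to the stabiliser-type ambiguity handled by Theorem~\ref{topologicalVersion}.
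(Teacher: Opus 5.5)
Your proposal has a genuine gap: the ``uniform control'' step is never carried out, and the construction feeding into it cannot support it. You choose $g_k$ only by $d$-approximating its fixed points to $\xi$ and $\eta_k$. But the $d_u$-topology is much finer than the $d$-topology: by Corollary~\ref{conicalOrBoundedParabolic}, every conical point, in particular every $g_k^+$, is $d_u$-isolated. So $d$-proximity of $g_k^+$ to $\xi$ gives no bound on $\sup_{h\in G}d(h\xi,hg_k^+)$.

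Concretely, in your third bullet you have reduced to $h_k$ with repelling point $\xi$, with $h_k\xi\to a$ and $h_k\eta_k\to a$. The only relation between $g_k$ and the pair $\xi,\eta_k$ is $d$-closeness, and a divergent sequence $h_k$ does not respect it. Nothing prevents $h_kg_k^{\pm}$ from staying away from $a$. Taking a power of $g_k$ cannot help either: $(g_k^n)^+=g_k^+$, and $d_u(\xi,g_k^+)$ depends only on that point. The completeness fallback restates the same difficulty. The claimed finiteness of ``dangerous'' $h$ is unjustified; since $\xi$ is not bounded parabolic, there is no such finiteness to appeal to.

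The missing idea is to use statement (3) of Corollary~\ref{conicalOrBoundedParabolic}, the non-discreteness of the orbit $G\xi$, instead of (2). This supplies group elements $h_n$ with $d_u(h_n\xi,\xi)\to0$, not merely nearby points $\eta_k$. The paper then proceeds as follows.
\begin{enumerate}
\item Lemma~\ref{producingShortHyperbolicElement} and the triangle inequality turn the $h_n$ into hyperbolic elements $g_n$ with $d_u(g_n\xi,\xi)\to0$ (Lemma~\ref{hyperbolicElementsGconverge}). The inequality $d(hg_n\xi,h\xi)\le d_u(g_n\xi,\xi)$, valid for all $h\in G$, is exactly the uniform control your construction lacks.
\item Since $\tau_u(g_n)\le d_u(g_n\xi,\xi)\to0$, Corollary~\ref{translationLengthLowerBound} forces $d_u(g_n^+,g_n^-)\to0$.
\item Suppose $d(h_ig_{n_i}^+,h_i\xi)>\varepsilon$ for some $h_i\in G$. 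After passing to a subsequence, $h_ig_{n_i}^{\pm}\to\eta$ and $h_i\xi\to\zeta\ne\eta$. Lemma~\ref{hyperbolicElementFixPointConvergence} makes $\{h_ig_{n_i}h_i^{-1}\}$ a convergence sequence with both points equal to $\eta$, so $h_ig_{n_i}\xi\to\eta$. This contradicts $d(h_ig_{n_i}\xi,h_i\xi)\to0$.
\end{enumerate}
Hence $g_n^+\to\xi$ in $d_u$, and each $g_n^+$ is conical.
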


\subsection*{Structure of the paper}
The paper is organized as follows. 
In Section~\ref{chapter:convergence}, we recall the necessary background on convergence group actions. 
In Section~\ref{chapter:visibility}, we review Gerasimov's topological visibility theory and prove Theorem~\ref{topologicalVersion}.
In Section~\ref{chapter:orbit}, we introduce the orbit uniform metric, establish its basic properties, and prove Theorems~\ref{dGComplete} and~\ref{geomInfiniteUncountable}. 
In Section~\ref{chapter:escaping}, we introduce two notions of translation length for convergence group actions and establish their finiteness properties, from which we derive Theorem~\ref{geomInfiniteEscapingGeodesic}. 
In Section~\ref{chapter:global}, we establish some further properties of orbit uniform metrics.

\subsection*{Acknowledgments}
We thank Leonid Potyagailo and Wenyuan Yang for helpful discussions.

\section{Convergence Group Actions}\label{chapter:convergence}
In this section, we collect some basic facts about convergence group actions that will be used throughout the paper. Our presentation follows primarily \cite{Tuk94, Bow99}. We recall the definitions of convergence actions, limit sets, conical and parabolic points, and geometric finiteness, together with several standard properties that will be used later.

\begin{definition}
    Let $M$ be a compactum, and let $G$ be a group acting on $M$ by homeomorphisms. The action is called a \textit{convergence group action} if, for every sequence $\{g_n\}$ of distinct elements of $G$, there exist a subsequence $\{g_{n_i}\}$ and points $a, b \in M$ such that $g_{n_i} x \to b$ locally uniformly on $M \setminus \{a\}$.
\end{definition}

Equivalently, the action of $G$ on $M$ is a convergence group action if the induced action on the space
$$
\Theta^3(M):=\{(x, y, z)\in M^3: x, y, z \text{ are pairwise distinct.}\}
$$
is proper.

\begin{example}
The class of convergence group actions includes many natural examples.

\begin{itemize}
    \item[(1)] Suppose that $X$ is a proper Gromov hyperbolic space and that $G$ acts properly on $X$ by isometries. Then the action of $G$ on $X$ induces an action on the Gromov boundary $\partial X$, which is a convergence group action.
    \item[(2)] Let $G$ be a finitely generated group. Then the natural actions of $G$ on its Floyd boundary \cite{Kar03} and end boundary \cite{Sta68} are convergence group actions.
\end{itemize}
\end{example}

We next recall several standard notions associated with convergence group actions, including limit sets and the classification of elements.

\begin{definition}
    Let $\{g_n\}$ be a sequence in $G$ and let $a, b\in M$.
    If $g_n x\to b$ locally uniformly on $M\setminus\{a\}$, we say that $\{g_n\}$ is a \textit{convergence sequence} with \textit{attracting point} $b$ and \textit{repelling point} $a$.
\end{definition}

\begin{remark}
    If $\{g_n\}$ is a convergence sequence with attracting point $b$ and repelling point $a$, then $\{g_n^{-1}\}$ is a convergence sequence with attracting point $a$ and repelling point $b$.
\end{remark}

\begin{definition}
    For a convergence action of $G$ on $M$, the \textit{limit set} $\Lambda H$ of a subgroup $H \le G$ is the set of all attracting points of convergence sequences in $H$.

    The action is said to be \textit{non-elementary} if $\#\Lambda G > 2$.
\end{definition}

If the action is non-elementary, then the limit set $\Lambda G$ is the unique minimal nonempty closed $G$-invariant subset of $M$. Moreover, $\Lambda G$ is perfect (i.e., it has no isolated points) and is therefore uncountable. 

An element of $G$ is called \textit{elliptic} if it has finite order.
By the definition of a convergence group action, every element of infinite order has at least one and at most two fixed points in $M$.
Such an element is called \textit{hyperbolic} if it has exactly two fixed points and \textit{parabolic} if it has exactly one.

For a hyperbolic element $g\in G$, we denote its two fixed points in $M$ by $g^+$ and $g^-$, where $g^+$ is the attracting fixed point and $g^-$ is the repelling fixed point.

We next recall the dynamical classification of points in the limit set. Conical and parabolic points will play an important role in the definition of geometric finiteness.

\begin{definition}\label{defn:limitpoints}
    A point $\xi \in M$ is called \textit{conical} if there exists a convergence sequence $\{g_n\}$ with attracting point $b\in M$ and repelling point $\xi$, such that $\{g_n\xi\}$ converges to a point different from $b$.

    A point $\xi \in M$ is called \textit{parabolic} if its stabilizer $G_\xi$ is infinite and its limit set satisfies $\Lambda G_\xi = \{\xi\}$.

    A parabolic point $\xi$ is called \textit{bounded parabolic} if the action of $G_\xi$ on $M \setminus \{\xi\}$ is cocompact.
\end{definition}

Note that both conical and parabolic points belong to the limit set $\Lambda G$.
We denote by $\Lambda^{c} G$ and $\Lambda^{nc} G$ the sets of conical and non-conical points in $\Lambda G$, respectively. In particular, the fixed points of every hyperbolic element are conical.

The following lemma is a minor variant of \cite[Lemma~2C]{Tuk94}. For a proof under the present hypotheses, see the proof of \cite[Lemma~2.3]{YY26}.

\begin{lemma}\label{producingHyperbolicElements}
    Suppose that the action of $G$ on $M$ is a non-elementary convergence group action.
    If there exist an element $f \in G$ and an open set $U \subset M$ such that $f\overline{U} \subsetneqq U$, then $f$ is hyperbolic.
\end{lemma}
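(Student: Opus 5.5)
Let $f$ and $U$ be as in the statement; we may assume $U\neq M$, since otherwise $f\overline{U}\subsetneqq U$ cannot hold. The plan has three steps: show $f$ has infinite order, show the powers of $f$ form a convergence sequence whose limit points lie in prescribed regions, and finally show these limit points are distinct fixed points.

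\emph{Step 1: $f$ has infinite order.} Iterating the hypothesis gives $f^{n}\overline{U}\subset f^{n-1}U$, hence a strictly decreasing chain
$$
\overline{U}\supsetneq f\overline{U}\supsetneq f^2\overline{U}\supsetneq\cdots .
$$
Strictness at each level follows by applying the homeomorphism $f^{n}$ to $f\overline U\subsetneqq U\subset\overline U$. If $f^k=1$ for some $k\ge1$, then $\overline U=f^k\overline U\subsetneqq\overline U$, a contradiction. So the elements $f^n$ are pairwise distinct.

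\emph{Step 2: locate the limit points.} By the convergence property, some subsequence $f^{n_i}$ converges locally uniformly to $b$ on $M\setminus\{a\}$. I first argue that $b\in K:=\bigcap_n f^n\overline U$.
\begin{itemize}
    \item Suppose $\overline U\setminus\{a\}$ is nonempty. Pick $x$ in it. Then $f^{n_i}x\in f^{n_i}\overline U$, and these sets are nested and closed, so $b=\lim f^{n_i}x\in K$.
    \item The set $\overline U$ is not $\{a\}$. Since $U$ is open and nonempty (it contains $f\overline U$), $U=\{a\}$ would make $a$ isolated, and then $f\{a\}\subsetneqq\{a\}$ is impossible.
\end{itemize}
Symmetrically, $f^{-1}(M\setminus U)\subsetneqq M\setminus\overline{U}$, and $M\setminus\overline U$ is open. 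Applying the same reasoning to $f^{-n_i}$, which converges with attracting point $a$ and repelling point $b$, gives $a\in\bigcap_n f^{-n}(M\setminus U)$. This last set is disjoint from $U$, so $a\notin U$. Since $b\in f\overline U\subset U$, we get $a\neq b$.

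\emph{Step 3: $a$ and $b$ are fixed points, and $f$ has two of them.} This is the main obstacle, since the argument so far only involves a subsequence. To see that $b$ is fixed, note that $f^{n_i}(fx)=f(f^{n_i}x)\to fb$ for $x\neq a$, $fx\neq a$. The left side also tends to $b$, because $f^{n_i}\to b$ locally uniformly off $a$. Hence $fb=b$. The same argument with $f^{-1}$ gives $fa=a$.

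So $f$ is an infinite-order element fixing the two distinct points $a\neq b$. Since a parabolic element has exactly one fixed point, $f$ is hyperbolic, and in fact $b=f^+$, $a=f^-$. Non-elementarity is needed only to guarantee that there are enough points, for instance to choose $x$ with both $x\neq a$ and $fx\neq a$. I would double-check this step in degenerate small cases, but in a non-elementary action $M$ is infinite, so such $x$ exist.
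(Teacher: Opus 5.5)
Your proof is correct. The paper gives no proof of its own; it cites \cite[Lemma~2C]{Tuk94} and the proof of \cite[Lemma~2.3]{YY26}.

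The usual argument for this fact is by elimination. The strict chain $\overline U\supsetneq f\overline U\supsetneq f^2\overline U\supsetneq\cdots$ excludes finite order. If $f$ were parabolic with fixed point $p$, then $f^{\pm n}\to p$ off $p$ would force $p\in\bigcap_{n\ge1}f^n\overline U\subset U$ and also $p\in\bigcap_{n\ge1}f^{-n}(M\setminus U)\subset M\setminus U$, a contradiction.

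You use the same nested-set trapping, but apply it to a single convergence subsequence $f^{n_i}$. This places $b\in U$ and $a\notin U$ directly, and only then do you show that $a$ and $b$ are fixed. The gain is that you avoid the case analysis. You also avoid the separate fact that a parabolic element's forward and backward iterates converge to its fixed point, which is itself proved by your Step~3.

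Two small remarks:
\begin{itemize}
\item Your setup also yields the ``at most two fixed points'' fact, which you quote. If $fx=x$ with $x\ne a$, then $x=f^{n_i}x\to b$, so $\mathrm{Fix}(f)=\{a,b\}$.
\item Non-elementarity is not needed in Step~3. Any $x\in\overline U\setminus\{a\}$ satisfies $fx\in f\overline U\subset U$, hence $fx\ne a$ because $a\notin U$. Symmetrically, any $y\in M\setminus U$ satisfies $y\ne b$ and $f^{-1}y\in M\setminus\overline U$, hence $f^{-1}y\ne b$.
\end{itemize}
So the hypothesis $\#M\ge 3$ you worried about in degenerate cases is never used.
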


We use this result to prove the following lemma, which is implicit in the proof of \cite[Proposition~3.2]{YY26}.

\begin{figure}[h]
\centering
\begin{tikzpicture}[
    scale=1,
    point/.style={circle, fill=black, inner sep=1.2pt},
    boundarypoint/.style={circle, fill=red!70, inner sep=1.5pt},
    neighborhood/.style={line width=1.2pt},
    font=\small
]

\def\radius{5}
\def\startangle{50}
\def\endangle{130}

\draw[thick] (\startangle:\radius) arc (\startangle:\endangle:\radius);
\node at (40:5) {$M$};

\coordinate (xi) at (110:\radius);
\coordinate (gNxi) at (80:\radius);
\coordinate (eta) at (65:\radius);

\node[boundarypoint, label={[shift={(0,-0.6)}]below:$\xi$}] at (xi) {};
\node[boundarypoint, label={[shift={(0,-0.4)}]below:$g_{n_i}\xi$}] at (gNxi) {};
\node[boundarypoint, label=above:$\eta$] at (eta) {};

\def\rotangle#1{%
    \pgfmathparse{#1+90}%
    \pgfmathresult%
}

\pgfmathsetmacro{\xitheta}{110}
\pgfmathsetmacro{\xirot}{\xitheta+90}
\draw[neighborhood, blue!80, rotate around={\xirot:(xi)}] 
    (xi) ellipse (1.6 and 0.6);
\node[blue!80] at ($(xi)+(0.8,1.2)$) {$U$};

\draw[neighborhood, red!80, rotate around={\xirot:(xi)}] 
    (xi) ellipse (1.0 and 0.4);
\node[red!80] at ($(xi)+(-0.5,0.7)$) {$V$};

\pgfmathsetmacro{\gNtheta}{80}
\pgfmathsetmacro{\gNrot}{\gNtheta+90}
\draw[neighborhood, green!70!black, rotate around={\gNrot:(gNxi)}] 
    (gNxi) ellipse (0.9 and 0.4);
\node[green!70!black] at ($(gNxi)+(0.4,0.7)$) {$g_{n_i}U$};

\draw[neighborhood, orange!80, rotate around={\xirot:(xi)}] 
    (xi) ellipse (0.6 and 0.3);
\node[orange!80] at ($(xi)+(1.2,-0.3)$) {$g_{n_j} g_{n_i}U$};

\end{tikzpicture}
\caption{Illustrating the hyperbolic elements $g_ng_N$ in the proof of Proposition~\ref{lineNearOrbit}.}
\label{fig:neighborhoods}
\end{figure}

\begin{lemma}\label{producingShortHyperbolicElement}
    Let $\xi$ be a non-conical point in $M$.
    Suppose that a sequence $\{g_n\}$ in $G$ satisfies $g_n\xi\ne \xi$ for every $n\in \mathbb{N}$ and $\lim\limits_{n\to\infty}g_n\xi=\xi$. Then there exists a subsequence $\{g_{n_i}\}$ such that, for each fixed $i\in \mathbb{N}$, the element $g_{n_j}g_{n_i}$ is hyperbolic for all sufficiently large $j\in \mathbb{N}$.
\end{lemma}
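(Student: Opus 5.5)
Since $g_n\xi\ne\xi$, the elements $g_n$ are eventually pairwise distinct after passing to a subsequence; otherwise some fixed $g$ would satisfy $g\xi=\xi$ in the limit. By the convergence property we pass to a subsequence that is a convergence sequence with attracting point $b$ and repelling point $a$. The plan is to show that $a=b=\xi$ and then to apply Lemma~\ref{producingHyperbolicElements} with $U$ a small neighbourhood of $\xi$, as in Figure~\ref{fig:neighborhoods}.

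\textbf{Step 1: $a=\xi$.} If $a\ne\xi$, then $\xi$ lies in the region where $g_n\to b$ locally uniformly, so $g_n\xi\to b$. Since also $g_n\xi\to\xi$, we get $b=\xi$. We can still extract a contradiction as follows. Pick a point $\zeta\in\Lambda G$ with $\zeta\ne a,\xi$, which exists by non-elementarity. Then $g_n\zeta\to\xi$ as well. I expect this case not to contradict non-conicality directly. However, Lemma~\ref{producingHyperbolicElements} only needs the repelling point to lie away from $\xi$ and the attracting point at $\xi$, so I will not insist on $a=\xi$. The real requirement is $b=\xi$, which I handle next.

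\textbf{Step 2: $b=\xi$.} Suppose $b\ne\xi$. By Step 1 this forces $a=\xi$, so $\{g_n\}$ is a convergence sequence with repelling point $\xi$. Moreover $g_n\xi\to\xi\ne b$, so by Definition~\ref{defn:limitpoints} the point $\xi$ is conical, contradicting the hypothesis. Hence $b=\xi$.

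\textbf{Step 3: producing hyperbolic elements.} Here the key input is $a\ne\xi$ or $a=\xi$. Fix $i$. I want an open $U\ni\xi$ such that $g_{n_j}g_{n_i}\overline U\subsetneqq U$ for large $j$. This works if $\overline U$ avoids the point $a$, or more precisely if $g_{n_i}\overline U$ is a compact set avoiding $a$. Then local uniform convergence gives $g_{n_j}g_{n_i}\overline U\subset V$ for any prescribed neighbourhood $V$ of $\xi$, once $j$ is large, and we can take $V\Subset U$.

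\textbf{Step 4: avoiding $a$.} If $a\ne\xi$, choose $U$ a small neighbourhood of $\xi$ with $a\notin\overline U$. Since $g_{n_i}\xi\to\xi$, we may pass to a subsequence so that $g_{n_i}\xi\in U$. We then want $g_{n_i}\overline U\subset U\setminus\{a\}$, which would require shrinking $U$ depending on $i$. That is permissible, since $U$ may depend on $i$ while $j$ runs. So for fixed $i$, choose $U_i\ni\xi$ small with $g_{n_i}\overline{U_i}$ contained in a small neighbourhood of $g_{n_i}\xi$ avoiding $a$. This uses $g_{n_i}\xi\ne a$, which holds for large $i$ because $g_{n_i}\xi\to\xi\ne a$. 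Then for large $j$ the set $g_{n_j}g_{n_i}\overline{U_i}$ lies inside a small neighbourhood of $\xi$ contained in $U_i$. Strictness is ensured by keeping $U_i\ne M$ and taking the image inside a strictly smaller closed ball. Lemma~\ref{producingHyperbolicElements} then gives hyperbolicity.

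If $a=\xi$, the same argument needs $g_{n_i}\xi\ne\xi=a$, which is exactly the hypothesis $g_n\xi\ne\xi$. So both cases go through uniformly: the only requirement is $g_{n_i}\xi\ne a$, which holds for all large $i$ in either case. We pass to the tail subsequence accordingly.

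\textbf{Main obstacle.} The crucial point is Step 2, identifying the attracting point as $\xi$ via non-conicality; the rest is bookkeeping with local uniform convergence. The only care needed is to choose $U_i$ depending on $i$, and to ensure that the inclusion $f\overline U\subsetneqq U$ is strict and that $\overline{U_i}\ne M$.
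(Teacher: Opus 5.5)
Your proof is correct and follows the paper's argument: non-conicality forces the attracting point $b$ of a convergence subsequence to be $\xi$, and for fixed $i$ you take a neighbourhood $U$ of $\xi$ with $g_{n_i}\overline U$ avoiding the repelling point $a$ (possible since $g_{n_i}\xi\neq a$). Then $g_{n_j}g_{n_i}\overline U\subsetneqq U$ for large $j$, and Lemma~\ref{producingHyperbolicElements} applies. Your Step~1 heading is misleading, since you never prove $a=\xi$ and do not need it; the fact you actually extract, $a\neq\xi\Rightarrow b=\xi$, is exactly what Step~2 uses.
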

\begin{proof}
    After passing to a subsequence, suppose that $\{g_{n_i}\}$ is a convergence sequence with attracting point $\zeta\in M$ and repelling point $\eta\in M$. We claim that $\zeta=\xi$. Indeed, if $\zeta\ne \xi$, then necessarily $\xi=\eta$ and $\lim\limits_{i\to\infty}g_{n_i}\xi=\xi\ne \zeta$. This would imply that $\xi$ is conical, a contradiction.

    By passing to a further subsequence, we may assume that $g_{n_i}\xi \ne \eta$ for every $i\in \mathbb{N}$.
    Fix $i\in \mathbb{N}$.
    Choose neighborhoods $U$ and $V$ of $\xi$ in $M$ such that $\xi \in V \subset \overline{V} \subsetneq U$ and $\eta \notin g_{n_i} \overline{U}$. Such neighborhoods exist because $M$ is a compact Hausdorff space with no isolated points and $g_{n_i} \xi \ne \eta$.
    Since $g_{n_j} x \to \xi$ locally uniformly on $M\setminus \{\eta\}$, $g_{n_j} (g_{n_i} \overline{U}) \subseteq V \subsetneq U$ for all sufficiently large $j\in \mathbb{N}$. See Figure~\ref{fig:neighborhoods}. By Lemma~\ref{producingHyperbolicElements}, the element $g_{n_j} g_{n_i}$ is hyperbolic.
\end{proof}

The preceding lemma provides a useful criterion for producing hyperbolic elements with prescribed dynamical properties. We next establish a convergence property of sequences of hyperbolic elements, whose proof relies on Lemma~\ref{producingShortHyperbolicElement}. This result will serve as a key ingredient in the study of the global structure of the $d_u$-metric in Section~\ref{chapter:global}.

\begin{lemma}\label{hyperbolicElementFixPointConvergence}
    Suppose that $\{g_n\}$ is a sequence of hyperbolic elements in $G$ such that $g_n^+\to \xi$ and $g_n^-\to \xi$ for some $\xi\in M$. Then $\{g_n\}$ is a convergence sequence whose attracting and repelling points are both $\xi$.
\end{lemma}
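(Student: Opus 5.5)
The plan is to reduce to subsequences and use the fixed-point equations $g_ng_n^\pm=g_n^\pm$ together with the north–south dynamics of each individual hyperbolic element. I will not need Lemma~\ref{producingShortHyperbolicElement}.

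First I check that no element occurs infinitely often. If $g_n=g$ for infinitely many $n$, then $g^+=\xi=g^-$, which is impossible since $g$ is hyperbolic. Hence every subsequence of $\{g_n\}$ contains a subsequence of distinct elements. It therefore suffices to show the following: whenever a subsequence (still written $\{g_n\}$) is a convergence sequence with attracting point $b$ and repelling point $a$, we have $a=b=\xi$. If this holds, then every subsequence has a further subsequence converging to $\xi$ locally uniformly on $M\setminus\{\xi\}$, and this gives the conclusion for the whole sequence.

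\emph{Step 1: $a\neq\xi$ forces $b=\xi$.} Suppose $a\ne\xi$. Fix a closed neighborhood $C$ of $\xi$ not containing $a$. For large $n$ we have $g_n^+\in C$, and $g_n\to b$ uniformly on $C$. Hence $g_n^+=g_ng_n^+\to b$. Since also $g_n^+\to\xi$, we get $b=\xi$.

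\emph{Step 2: the case $a=\xi\neq b$ is impossible.} This is the step I expect to be the main obstacle, since the fixed-point trick of Step 1 gives no information when the repelling point is $\xi$. I would use the dynamics of each $g_n$ itself. Choose an open neighborhood $W$ of $\xi$ and an open neighborhood $V$ of $b$ with $\overline V\cap\overline W=\emptyset$, and set $K=M\setminus W$. Then $K$ is compact, $a=\xi\notin K$, and $\overline V\subset K$. By local uniform convergence on $M\setminus\{a\}$, we have $g_nK\subset V\subset K$ for all large $n$. Fix such an $n$ large enough that also $g_n^\pm\in W$. Take any $x\in K$. Then $x\ne g_n^-$, and every iterate satisfies $g_n^kx\in K$.

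On the other hand, for a hyperbolic element $g$ the cyclic group $\langle g\rangle$ acts as a convergence group with $g^k\to g^+$ locally uniformly on $M\setminus\{g^-\}$. This is the standard fact that the only possible attracting/repelling pair for $\{g^k\}_{k>0}$ is $(g^+,g^-)$. Applied to $g=g_n$ and our $x$, it gives $g_n^kx\to g_n^+\in W$. Since $W$ is open and each iterate lies in the closed set $K=M\setminus W$, the limit $g_n^+$ would have to lie in $K$, a contradiction. Therefore $b=\xi$ whenever $a=\xi$.

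Combining the two steps, we get $b=\xi$ in every case: by Step 1 if $a\ne\xi$, and by Step 2 if $a=\xi$. It remains to show $a=\xi$. Apply the same reasoning to the inverses $\{g_n^{-1}\}$. These are hyperbolic with $(g_n^{-1})^\pm=g_n^\mp\to\xi$, and they form a convergence sequence with attracting point $a$ and repelling point $b$. The argument above then yields $a=\xi$, which completes the proof.
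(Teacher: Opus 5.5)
Your proof is correct and uses the same two ingredients as the paper. The first is the fixed-point trick $g_n^+=g_ng_n^+\to b$ whenever $\xi$ is not the repelling point. The second is the north--south dynamics of each individual $g_n$ on a closed set that $g_n$ maps into itself, followed by passing to inverses to identify the repelling point. The only difference is bookkeeping: you split on whether the repelling point equals $\xi$ rather than on whether $a=b$. You also make explicit two details the paper leaves tacit, namely that no element repeats infinitely often (so convergence subsequences exist) and the passage from subsequences to the whole sequence.
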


\begin{proof}
    Let $\{g_{n_i}\}$ be a convergence subsequence of $\{g_n\}$ with attracting point $a\in M$ and repelling point $b\in M$.
    It suffices to show that $a=b=\xi$.

    \textit{Case 1}: $a\ne b$.

    Let $U$ and $V$ be neighborhoods of $a$ and $b$, respectively, such that $\overline{U}\cap\overline{V}=\varnothing$.
    Then $g_n(M\setminus V)\subset U$ for sufficiently large $n$.
    Since $\overline{U}\subset M\setminus V$, this implies that $g_n\overline{U}\subset U$.
    Hence
    $$
    \{g_n^+\}=\bigcap_{k=1}^{\infty} g_n^k\overline{U}\subset U.
    $$
    Since $U$ is an arbitrary neighborhood of $a$, it follows that $g_n^+\to a$, i.e. $a=\xi$.
    By considering the sequence $\{g_n^{-1}\}$, we similarly obtain $b=\xi$, which is a contradiction.

    \textit{Case 2}: $a=b$.

    Suppose for contradiction that $a=b\ne \xi$.
    Choose a neighborhood $U$ of $\xi$ such that $a\notin \overline{U}$.
    Since $g_n^+\in U$ for sufficiently large $n\in \mathbb{N}$, the locally uniform convergence implies that $g_ng_n^+\to a$.
    However,
    $$
    g_ng_n^+=g_n^+\to \xi,
    $$
    which is a contradiction.
\end{proof}

We are now ready to recall the notion of geometric finiteness for convergence group actions.
This notion originates in the work of Beardon and Maskit \cite{BM74} on Kleinian groups.

\begin{definition}\label{defn:geomfiniteconvergence}
    A convergence group action of $G$ on $M$ is \textit{geometrically finite} if every point of $\Lambda G$ is either conical or bounded parabolic.
    The action is called \textit{geometrically infinite} if it is not geometrically finite.
\end{definition}

At this stage, the definition is purely dynamical: it refers only to the convergence group action and the types of points in its limit set. The connection between this formulation and the classical geometric setting is provided by the following theorem of Yaman \cite{Yam04}.

\begin{lemma}\label{YamanGeomFinite}
    Suppose that a group $G$ admits a geometrically finite convergence action on a compact metrizable space $M$.
    Then there exist a Gromov hyperbolic space $X$ and a geometrically finite proper action of $G$ on $X$ such that the Gromov boundary $\partial X$ is $G$-equivariantly homeomorphic to the limit set $\Lambda G\subset M$.
\end{lemma}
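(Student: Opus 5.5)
Since this is Yaman's theorem \cite{Yam04}, the natural plan follows his strategy. I would construct a proper hyperbolic graph directly from the boundary dynamics and then identify its boundary with $\Lambda G$.

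\textbf{Reduction to the limit set.} First I restrict the action to $\Lambda G$. This restriction is again a convergence action and is minimal. It is still geometrically finite, because conicality and bounded parabolicity of a point of $\Lambda G$ do not change when $M$ is replaced by $\Lambda G$. For bounded parabolic points this holds because $(\Lambda G\setminus\{\xi\})/G_\xi$ is a closed subset of the compact space $(M\setminus\{\xi\})/G_\xi$.

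\textbf{Finiteness properties.} Next I record the standard consequences of geometric finiteness, in the spirit of Tukia \cite{Tuk94} and Bowditch \cite{Bow99}. These are:
\begin{itemize}
\item there are only finitely many $G$-orbits of parabolic points, say with representatives $p_1,\dots,p_k$;
\item each stabilizer $P_i=G_{p_i}$ is finitely generated relative to the data;
\item the action of $G$ on the space of distinct triples $\Theta^3(\Lambda G)$ is cocompact once we remove neighbourhoods of the "cusps", i.e.\ of triples collapsing towards parabolic points.
\end{itemize}
Concretely, I would use the conical/bounded-parabolic dichotomy to show the following. Every triple $(x,y,z)\in\Theta^3(\Lambda G)$ either lies in $GK$ for a fixed compact $K\subset\Theta^3(\Lambda G)$, or it lies in a $P_i$-translate of a "horospherical" region attached to some $gp_i$. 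The argument is by contradiction. Take a sequence of triples escaping every compact set modulo $G$. After translating and passing to a convergence subsequence, the triples either converge to a limit triple, or they collapse to a point $\xi$. Such a $\xi$ is either conical, which is excluded by the escape, or bounded parabolic, which puts the triples in a horospherical region.

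\textbf{The graph.} I then build the graph $X$. Its vertices are a $G$-invariant, locally finite, cobounded net of triples in $\Theta^3(\Lambda G)$. This net exists by properness of the action on triples and the compactness from the previous step. Two vertices are joined if their triples are "close", meaning they lie in a common translate of a fixed compact set. To this I glue a combinatorial horoball over each parabolic point $gp_i$, built on the vertices whose triples are nearly collapsed at $gp_i$, as in Groves--Manning cusped spaces.

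\textbf{Hyperbolicity and the boundary.} Two points remain.
\begin{enumerate}
\item \emph{$X$ is hyperbolic.} I would prove this using Bowditch's annulus-system criterion. The annuli are pairs $(A,B)$ of disjoint closed subsets of $\Lambda G$ with nonempty interiors, taken up to the $G$-action. The crossratio-type separation counts $\#(x,y\mid z,w)$ define a quasimetric on $\Theta^3$ whose geodesics are tree-like, and this yields the thin-triangle property.
\item \emph{$\partial X\cong\Lambda G$ equivariantly.} I would map a sequence of triples $(x_n,y_n,z_n)$ to the point onto which two of its coordinates collapse. Then I check that this is a well-defined continuous bijection from the Gromov boundary. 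Conical points correspond to points reached along geodesic rays staying near an orbit, and parabolic points correspond to the horoball centres.
\end{enumerate}
Once these hold, properness of the action on $X$ follows from properness on $\Theta^3$. Geometric finiteness on $X$ follows because the complement of the horoballs is $G$-cobounded.

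\textbf{Expected obstacle.} The main obstacle is the hyperbolicity of $X$, which requires a purely topological thin-triangle argument with no ambient metric. I would attempt it first through Bowditch's characterization of convergence actions that are uniform on triples. Apply this to the coned-off action in which each $P_i$-horoball is collapsed. Then recover hyperbolicity of the cusped space by the standard result that coning off horoballs and adding combinatorial horoballs preserves hyperbolicity.
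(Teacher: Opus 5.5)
The paper gives no proof of this lemma; it is quoted from Yaman \cite{Yam04}. Read as a self-contained argument, your proposal has a genuine gap at exactly the step that \emph{is} Yaman's theorem. Hyperbolicity of the cusped graph $X$ is only asserted, and the identification $\partial X\cong\Lambda G$, together with properness and geometric finiteness on $X$, all depend on it. In item (1) you invoke Bowditch's annulus/crossratio machinery. That argument gets its uniform constants from cocompactness of the action on $\Theta^3$, which is precisely what fails near parabolic points. Controlling the separation counts in the cusps is the real content of \cite{Yam04}, and you do not say how to do it.

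The fallback in your last paragraph does not work either. Bowditch's theorem on actions that are uniform on triples cannot be applied to a ``coned-off action''. When there are parabolic points, there is no compactum on which $G$ acts properly and cocompactly on triples: the theorem would make $G$ hyperbolic, which is false for a non-uniform lattice in $\mathrm{Isom}(\mathbb{H}^3)$ with $\mathbb{Z}^2$ cusps. Also, on the coned-off graph the $P_i$ fix the cone vertices, so the action is not even proper. Finally, hyperbolicity of a coned-off graph does not transfer to the cusped space without extra input such as fineness or bounded coset penetration. Coning off the cosets of $\mathbb{Z}$ in $\mathbb{Z}^2$ gives a quasi-line, yet the corresponding cusped space is not hyperbolic.

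Several preliminary steps also need repair.
\begin{itemize}
\item Cocompactness of the thick part of $\Theta^3(\Lambda G)$ does not follow from your pointwise contradiction argument. The step ``conical, which is excluded by the escape'' is false. A sequence of triples collapsing to a conical point can leave every $GK$, because conicality only gives returns to a compact set along a subsequence, with unbounded excursions in between. This happens for recurrent rays in a doubly degenerate Kleinian group. The cocompactness is a global consequence of geometric finiteness and should be quoted from Tukia and Gerasimov (Lemma~\ref{2cocompact}).
\item The phrase ``finitely generated relative to the data'' is not a statement, and the hypotheses do not make the $P_i$ finitely generated. For example, take $P*\mathbb{Z}$ with $P$ countable but not finitely generated, acting on its Bowditch boundary. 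A Groves--Manning horoball built on a connected locally finite base with a cocompact $P_i$-action forces $P_i$ to be finitely generated. So in general you need horoballs over a proper left-invariant metric on $P_i$. Yaman's original statement in fact assumes finitely generated parabolic stabilizers.
\item A construction based on triples says nothing when $\#\Lambda G\le 2$, and the lemma as stated does not exclude this case.
\end{itemize}
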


We next introduce the $2$-cocompactness.
Let $M$ be a compactum and put $S^2(M)=M^2/\sim$, where $(x, y)\sim(y, x)$ for every $x, y\in M$.
We denote by $\{x, y\}$ the unordered pair in $S^2(M)$, that is, the equivalence class of $(x, y)$ in $M^2/\sim$.
Let $\Delta(M)=\{(x, x)\in S^2(M): x\in M\}$ be the diagonal in $S^2(M)$ and $\Theta^2(M)=\{\{x, y\}\in S^2(M): x\ne y\}$.
Then $S^2(M)=\Theta^2(M)\sqcup\Delta(M)$ is a compactification of $\Theta^2(M)$.

The following lemma, combining results of Tukia \cite{Tuk98} and Gerasimov
\cite{Ger09}, characterizes geometric finiteness in terms of $2$-cocompactness. The group action of $G$ on $M$ is called $2$-cocompact if the induced action of $G$ on $\Theta^2(M)$ is cocompact.

\begin{lemma}\label{2cocompact}
    Suppose that a group $G$ admits a convergence action on a compactum $M$. Then the action of $G$ on $M$ is $2$-cocompact if and only if it is geometrically finite.
\end{lemma}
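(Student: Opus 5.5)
The plan is to treat the two implications separately, following Tukia \cite{Tuk98} for geometric finiteness $\Rightarrow$ $2$-cocompactness and Gerasimov \cite{Ger09} for the converse. Throughout, I use that cocompactness on $\Theta^2(M)$ means there is a compact $K\subset\Theta^2(M)$ with $GK=\Theta^2(M)$. Since $S^2(M)$ compactifies $\Theta^2(M)$ by adding the diagonal, this amounts to the following: there is $\epsilon>0$ such that every pair $\{x,y\}$ with $x\ne y$ has a translate $\{gx,gy\}$ at distance at least $\epsilon$ from $\Delta(M)$. In the metrizable case this is $\inf_{x\neq y}\sup_{g} d(gx,gy)>0$; in general, replace $\epsilon$ by an entourage.

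\textbf{Geometric finiteness $\Rightarrow$ $2$-cocompactness.} Argue by contradiction. Take pairs $\{x_n,y_n\}$ with $x_n\ne y_n$ whose whole $G$-orbits approach the diagonal. Replace each pair by a translate that is nearly maximally spread, i.e.\ $d(x_n,y_n)\ge \frac12\sup_g d(gx_n,gy_n)$. After passing to a subsequence, $x_n,y_n\to\xi$ for a single point $\xi$. I then split according to the type of $\xi$.

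If $\xi$ lies in $\Omega G$, use proper discontinuity there, together with the fact that $x_n$ or $y_n$ must eventually be pushed by some group element toward a limit point. More simply, pick a group element moving a small neighbourhood of $\xi$ onto a set of definite size; this contradicts near-maximality.

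If $\xi$ is conical, take a convergence sequence $h_k$ with repelling point $\xi$ and $h_k\xi\to a\neq b$, where $b$ is the attracting point. Choose $k=k(n)$ so that $h_k$ separates $x_n$ from $y_n$ by a definite amount. This is possible because a small neighbourhood of $\xi$ is stretched onto a set that contains points near $a$ and points near $b$. Again this contradicts near-maximality.

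If $\xi$ is bounded parabolic, use a compact fundamental domain $F\subset M\setminus\{\xi\}$ for $G_\xi$. At least one of $x_n,y_n$ differs from $\xi$; translate it by $p_n\in G_\xi$ into $F$. If $x_n,y_n\neq\xi$ both stay near $\xi$, then either $p_nx_n$ and $p_ny_n$ stay apart, or the argument reduces to a pair near some point of $F$, which is handled by the previous cases after an induction on the limit point type. Tukia's original argument makes this step precise; I would cite it directly.

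\textbf{$2$-cocompactness $\Rightarrow$ geometric finiteness.} Fix a compact $K$ with $GK=\Theta^2(M)$, and let $\xi\in\Lambda G$ be non-conical. The goal is to show that $\xi$ is bounded parabolic. For each $\eta\ne\xi$, pick $g_\eta$ with $g_\eta\{\xi,\eta\}\in K$. Non-conicality, applied as in Lemma~\ref{producingShortHyperbolicElement}, forces the following: as $\eta\to\xi$, the elements $g_\eta$ cannot form a convergence sequence with repelling point $\xi$ that keeps $g_\eta\xi$ away from the attracting point. Hence $g_\eta\xi$ and $g_\eta\eta$ stay in a compact family only if the $g_\eta$ coincide, up to finitely many choices, with elements $g\,s$ where $s\in G_\xi$.

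From this I would deduce two things. First, $G_\xi$ is infinite. Second, $G_\xi$ acts cocompactly on $M\setminus\{\xi\}$, because every $\eta$ is moved by some element of $G_\xi$ into a fixed compact set determined by $K$ and finitely many coset representatives. Finally, $\Lambda G_\xi=\{\xi\}$ follows from the convergence property, since a hyperbolic element of $G_\xi$ would make $\xi$ conical.

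The main obstacle is exactly this last direction, which is Gerasimov's visibility argument. Showing that the $g_\eta$ lie in finitely many cosets of $G_\xi$ requires the topological visibility property of the action on $\Theta^2(M)$ rather than a metric estimate. I expect to invoke \cite{Ger09} for that step rather than reproduce it.
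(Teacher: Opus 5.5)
At the level of citations you match the paper, which gives no argument of its own for this lemma and simply combines Tukia's theorem (geometric finiteness $\Rightarrow$ $2$-cocompactness) with Gerasimov's theorem (the converse). The problems are in the arguments you sketch around those citations.

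The $\Omega G$ case cannot be done at all. Let $C\subset\Omega G$ be a compact neighbourhood of $\xi$ and fix $\epsilon>0$. Only finitely many $g\in G$ satisfy $\operatorname{diam}(gC)\ge\epsilon$. Indeed, any sequence of distinct elements has a convergence subsequence whose repelling point lies in $\Lambda G$, hence off $C$, so it shrinks $C$ uniformly. Those finitely many homeomorphisms are equicontinuous. Hence $\sup_g d(gx,gy)\to0$ as distinct $x,y\in C$ tend to $\xi$, and no translate spreads the pair. For instance, a Schottky group acting on $S^2$ is geometrically finite in the sense of Definition~\ref{defn:geomfiniteconvergence}, since all its limit points are conical, but it is not $2$-cocompact on $\Theta^2(S^2)$. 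So the lemma must be read for the action on $\Theta^2(\Lambda G)$, i.e.\ with $M=\Lambda G$, which is the setting of the cited results. In that setting this case is empty.

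Your conical case is also invalid as written. Conicality of $\xi$ lets you spread pairs $\{\xi,y\}$; this is the $(1)\Rightarrow(2)$ step of Theorem~\ref{topologicalVersion}. It does not let you spread arbitrary pairs $x_n\neq y_n$ tending to $\xi$. To see this, let $Z$ be the set of limits of such pairs with $d_u(x_n,y_n)\to0$. Then $Z$ is closed and $G$-invariant, so for $M=\Lambda G$ it is either empty, which is exactly $2$-cocompactness, or all of $M$. In a geometrically infinite action $Z$ contains the non-expansive points by Corollary~\ref{conicalOrBoundedParabolic}, so $Z=M$ contains every conical point. Your choice of $k(n)$ uses nothing but conicality of $\xi$, so it would rule this out, and therefore it cannot work.

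Likewise, your parabolic case only transfers the collapsing pair to one converging to a point of $F$, so the ``induction on point type'' is circular. Showing that a single point lies outside $Z$ is already the whole theorem. A global input is needed. One option is Tukia's theorem, as the paper uses. Another is Yaman's theorem (Lemma~\ref{YamanGeomFinite}): after applying it, a geodesic joining two distinct points of $\Lambda G$ cannot lie in a single horoball of the disjoint invariant family, so it meets the cocompact thick part.

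In the converse direction, your claim that the $g_\eta$ can be taken in finitely many cosets of $G_\xi$ does not follow from non-conicality. The correct finiteness statement is that $H_k(\xi)$ has finitely many $G_\xi$-orbits, which the paper extracts from \cite{Ger09} in proving Theorem~\ref{topologicalVersion}. Citing Gerasimov there is right, but the heuristic before the citation should be dropped.
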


\section{Topological Visibility Theory}\label{chapter:visibility}
In this section, we review the topological visibility theory developed by Gerasimov \cite{Ger09}, which will be used to study the geometry of convergence group actions. We recall the notions of entourages, shadows, and horospheres and establish several properties needed in the proof of Theorem~\ref{topologicalVersion}.

\subsection{Entourages}

\begin{definition}
    A neighborhood of the diagonal $\Delta(M)$ in $S^2(M)$ is called an \textit{entourage} of $M$.
    We denote by $\mathrm{Ent}(M)$ the space of entourages of $M$.
\end{definition}

\begin{remark}
    Here, by a neighborhood of $\Delta(M)$, we mean a subset of $S^2(M)$ that contains an open set $U$ with $\Delta(M)\subset U$; the neighborhood itself is not required to be open.
\end{remark}

An entourage $\textbf{u}$ of $M$ defines a graph structure on $M$ as follows.
The vertex set is $M$, and two points $x, y\in M$ are joined if and only if $\{x, y\}\in \textbf{u}$.
We denote by $\delta_{\textbf{u}}$ the corresponding graph metric.

Set $\textbf{u}^n=\{\{x, y\}\in S^2(M): \delta_{\textbf{u}}(x, y)\le n\}$.
Then $\textbf{u}^n$ is also an entourage.
It follows from the Lebesgue covering theorem that, for every $\textbf{u}\in \mathrm{Ent}(M)$ and $n\in \mathbb{N}$, there exists $\textbf{u}\in \mathrm{Ent}(M)$ such that $\textbf{u}^n\subset \textbf{u}$.

\begin{definition}
    Let $\textbf{u}\in \mathrm{Ent}(M)$ be an entourage. A subset $S\subset M$ is called $\textbf{u}$-\textit{small} if its diameter with respect to the metric $\delta_\textbf{u}$ is $0$ or $1$.
\end{definition}

\begin{definition}
    Two entourages $\textbf{u}, \textbf{v}\in \mathrm{Ent}(M)$ are called \textit{unlinked} if $M=A\cup B$ where $A$ is $\textbf{u}$-small and $B$ is $\textbf{v}$-small, denoted by $\textbf{u}\unlinked \textbf{v}$.
    Two entourages $\textbf{u}, \textbf{v}\in \mathrm{Ent}(M)$ are called \textit{linked} if they are not unlinked, denoted by $\textbf{u}\# \textbf{v}$.
\end{definition}

From now on, we only consider self-linked entourages.

\begin{definition}
    Let $\textbf{u}, \textbf{v}\in \mathrm{Ent}(M)$ be entourages. If $\textbf{u}\unlinked \textbf{v}$, we define the \textit{shadow sets}
    $$\mathrm{Sh}_{\textbf{u}}\textbf{v}=\{S\subset M: S\text{ is } \textbf{u}\text{-small and }M-S\text{ is }\textbf{v}\text{-small}\}$$
    and $\mathrm{sh}_{\textbf{u}}\textbf{v}=\cap\mathrm{Sh}_{\textbf{u}}\textbf{v}$.

    Let $\textbf{u}\in \mathrm{Ent}(M)$ be an entourage and let $\xi\in M$. We define $\mathrm{sh}_{\textbf{u}}\xi=\{\xi\}$.
\end{definition}

The above constructions allow us to regard entourages as points representing regions in the internal geometry. We therefore enlarge $M$ by adjoining all entourages and endow the resulting space with a topology.

There is a topology on $\tilde{M}:=M\sqcup\mathrm{Ent}(M)$ defined as follows.
For every open subset $U$ of $M$, the corresponding subset
$$
\tilde{U}=U\cup \{\textbf{u}\in \mathrm{Ent}(M): M-U \text{ is }\textbf{u}\text{-small}\}
$$
is open in $\tilde{M}$.
The topology on $\tilde{M}$ is generated by all subsets of $\mathrm{Ent}(M)$ together with all sets $\tilde{U}$, where $U$ ranges over the open subsets of $M$.

\begin{definition}
    Suppose that $\textbf{b}$ is an entourage of $M$ and that $\textbf{a}, \textbf{c}\in \tilde{M}$ satisfy $\textbf{a}\unlinked \textbf{b}$ and $\textbf{b}\unlinked \textbf{c}$ whenever these conditions are required.
    Given $k\in \mathbb{N}$, we say that $\textbf{b}$ $k$-\textit{lies between} $\textbf{a}$ and $\textbf{c}$ if $\delta_{\textbf{b}}(\mathrm{sh}_{\textbf{b}}\textbf{a}, \mathrm{sh}_{\textbf{b}}\textbf{c})>k$, denoted by $\textbf{a}-\textbf{b}-\textbf{c}(k)$.
\end{definition}

\begin{lemma}\cite[Lemma~T2]{Ger09}\label{T2}
    Suppose that $k\in \mathbb{N}$, that $\textbf{a}, \textbf{d}\in \tilde{M}$, and that $\textbf{b}, \textbf{c}\in \mathrm{Ent}(M)$. If $\textbf{a}-\textbf{b}-\textbf{d}(k+1)$ and $\textbf{b}-\textbf{c}-\textbf{d}(2)$, then $\textbf{a}-\textbf{c}-\textbf{d}(k)$.
\end{lemma}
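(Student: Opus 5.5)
This lemma is quoted from \cite[Lemma~T2]{Ger09}, and within the paper one may simply cite it. If I were to reprove it, I would argue directly from the definitions of shadows and of the graph metrics $\delta_{\textbf{b}}$ and $\delta_{\textbf{c}}$. I have not carried out the key comparison step below, so the outline should be read as a proposal.

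\emph{Step 1: a $\textbf{b}$-small region around $\mathrm{sh}_{\textbf{c}}\textbf{d}$.} Since $\textbf{b}\unlinked\textbf{c}$, choose $S\in\mathrm{Sh}_{\textbf{c}}\textbf{b}$. Then $S$ is $\textbf{c}$-small, $M-S$ is $\textbf{b}$-small, and $\mathrm{sh}_{\textbf{c}}\textbf{b}\subset S$. Because $\delta_{\textbf{c}}(\mathrm{sh}_{\textbf{c}}\textbf{b},\mathrm{sh}_{\textbf{c}}\textbf{d})>2$, I would first show that $S$ itself (and not only its core $\mathrm{sh}_{\textbf{c}}\textbf{b}$) stays at $\delta_{\textbf{c}}$-distance at least $2$ from $\mathrm{sh}_{\textbf{c}}\textbf{d}$. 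This uses the freedom to shrink $S$ within $\mathrm{Sh}_{\textbf{c}}\textbf{b}$ and the fact that $S$ has $\delta_{\textbf{c}}$-diameter at most $1$. Granting this, the closed $\delta_{\textbf{c}}$-ball $N$ of radius $1$ about $\mathrm{sh}_{\textbf{c}}\textbf{d}$ lies in $M-S$. Hence $N$ is $\textbf{b}$-small, i.e.\ it has $\delta_{\textbf{b}}$-diameter at most $1$.

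\emph{Step 2: comparing shadows.} I would show that the $\textbf{c}$-shadow of $\textbf{a}$ and the $\textbf{b}$-shadow of $\textbf{a}$ are compatible. Concretely, any set in $\mathrm{Sh}_{\textbf{b}}\textbf{a}$, after being enlarged by the $\textbf{b}$-small set $N$ where needed, produces a set in $\mathrm{Sh}_{\textbf{c}}\textbf{a}$, and similarly for $\textbf{d}$. The aim is an inclusion of the form $\mathrm{sh}_{\textbf{c}}\textbf{d}\subset N$ together with a relation between $\mathrm{sh}_{\textbf{c}}\textbf{a}$ and $\mathrm{sh}_{\textbf{b}}\textbf{a}$. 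If $\textbf{a}$ or $\textbf{d}$ is a point of $M$, the shadows are singletons and this step is trivial.

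\emph{Step 3: contradiction.} Suppose instead that $\delta_{\textbf{c}}(\mathrm{sh}_{\textbf{c}}\textbf{a},\mathrm{sh}_{\textbf{c}}\textbf{d})\le k$. Take a $\textbf{c}$-path $x_0,\dots,x_m$ with $m\le k$, $x_0\in\mathrm{sh}_{\textbf{c}}\textbf{a}$ and $x_m\in\mathrm{sh}_{\textbf{c}}\textbf{d}$. The last edge lies in $N$, which has $\delta_{\textbf{b}}$-diameter at most $1$. I then want to turn this into a $\textbf{b}$-path of length at most $k+1$ from $\mathrm{sh}_{\textbf{b}}\textbf{a}$ to $\mathrm{sh}_{\textbf{b}}\textbf{d}$, contradicting $\textbf{a}-\textbf{b}-\textbf{d}(k+1)$. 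The extra unit of length should pay for the single jump through $N$ to reach $\mathrm{sh}_{\textbf{b}}\textbf{d}$.

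\emph{Main obstacle.} The delicate part is this comparison of the two graph metrics. A $\textbf{c}$-edge is not a priori a $\textbf{b}$-edge, so I would need to show that the portion of the path outside $N$ can be traded for $\textbf{b}$-steps without loss. The natural route is to use that $\textbf{c}$ sits on the $\textbf{d}$-side of $\textbf{b}$: the set $S$ containing $\mathrm{sh}_{\textbf{c}}\textbf{b}$ is $\textbf{c}$-small while its complement is $\textbf{b}$-small, and this should control $\textbf{c}$-edges that cross between $S$ and $M-S$. I expect Step 3 to require careful bookkeeping with the $\textbf{u}$-small sets. Should it fail, I would fall back on citing Gerasimov's argument verbatim.
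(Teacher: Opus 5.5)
Citing \cite[Lemma~T2]{Ger09} is exactly what the paper does, so on that point you agree with it. The reproof you sketch, however, cannot be completed. The obstacle you flag in Step~3 is fatal, not a matter of bookkeeping, because the statement as printed is false.

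Run your Steps~1--2 to the end. Take $S\in\mathrm{Sh}_{\textbf{c}}\textbf{b}$ and $B=M\setminus S$. The hypothesis $\textbf{b}-\textbf{c}-\textbf{d}(2)$ puts the $\textbf{c}$-shadow of $\textbf{d}$ inside $B$, so $B$ is also a $\textbf{b}$-shadow of $\textbf{d}$; this is your Step~1. Then $\textbf{a}-\textbf{b}-\textbf{d}(2)$ forces every $P\in\mathrm{Sh}_{\textbf{b}}\textbf{a}$ to miss $B$. Hence $P\subset S$ and $P\in\mathrm{Sh}_{\textbf{c}}\textbf{a}$, so both $\mathrm{sh}_{\textbf{c}}\textbf{a}$ and $\mathrm{sh}_{\textbf{c}}\textbf{b}$ lie in the $\textbf{c}$-small set $S$. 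Consequently, for nonempty shadows,
\[
\bigl|\delta_{\textbf{c}}(\mathrm{sh}_{\textbf{c}}\textbf{a},\mathrm{sh}_{\textbf{c}}\textbf{d})-\delta_{\textbf{c}}(\mathrm{sh}_{\textbf{c}}\textbf{b},\mathrm{sh}_{\textbf{c}}\textbf{d})\bigr|\le 1 .
\]
So the $\textbf{c}$-margin between $\textbf{a}$ and $\textbf{d}$ is pinned to the margin in the second hypothesis and cannot grow with $k$. The $\textbf{b}$-margin $k+1$ can be used up entirely inside $S$, where every $\textbf{c}$-step is free. That is exactly where Step~3 would have to trade $\textbf{c}$-steps for $\textbf{b}$-steps.

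Here is a concrete counterexample. Let $M=\mathbb{R}/\mathbb{Z}$ and $I=[0,\frac12]$. Let $J$ be the closed arc from $\frac12-10^{-3}$ through $\frac34$ to $10^{-3}$. Put
\[
\textbf{c}=\{\{x,y\}:x,y\in I\}\cup\{\{x,y\}:|x-y|<\tfrac1{10}\},\qquad \textbf{b}=\{\{x,y\}:x,y\in J\}\cup\{\{x,y\}:|x-y|<10^{-3}\}.
\]
\begin{itemize}
\item Both are self-linked entourages, and $\textbf{b}\unlinked\textbf{c}$ via $M=(\frac12,1)\cup I$.
\item $\mathrm{Sh}_{\textbf{c}}\textbf{b}$ consists of the sets $X$ with $M\setminus J\subset X\subset I$, so $\delta_{\textbf{c}}(\mathrm{sh}_{\textbf{c}}\textbf{b},\frac34)=3$.
\item $\delta_{\textbf{b}}(\frac14,\frac34)>250$, while $\delta_{\textbf{c}}(\frac14,\frac34)=4$, via the path $\frac14,\frac12,0.59,0.68,\frac34$.
\end{itemize}
With $\textbf{a}=\frac14$, $\textbf{d}=\frac34$ and $k=10$, both hypotheses hold and the conclusion fails.

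The same two inclusions do prove the statement with the margins interchanged. If $\textbf{a}-\textbf{b}-\textbf{d}(2)$ and $\textbf{b}-\textbf{c}-\textbf{d}(k+1)$, then $\textbf{a}-\textbf{c}-\textbf{d}(k)$, by the lower half of the displayed estimate. Under the printed hypotheses, what does follow is $\textbf{a}-\textbf{b}-\textbf{c}(k)$, since $\mathrm{sh}_{\textbf{b}}\textbf{c}$ and $\mathrm{sh}_{\textbf{b}}\textbf{d}$ both lie in the $\textbf{b}$-small set $B$.

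The interchanged version suffices where the lemma is used in the proof of Theorem~\ref{topologicalVersion}. From $g\xi-\textbf{u}-\xi(2k+1)$ and $\textbf{u}-\textbf{v}-\xi(k)$ (Lemma~\ref{H1}) it gives $\xi-\textbf{v}-g\xi(k-1)$. A margin of at least $1$ is all that is needed to conclude $h^{-1}g\xi\in K$. So rather than falling back on Gerasimov ``verbatim'', check his Lemma~T2 against this transcription: the inequality as stated here cannot be proved.
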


\subsection{Horoballs}
To prove the implication $(3)\implies(1)$ in Corollary~\ref{conicalOrBoundedParabolic}, we assume the separation condition stated at the beginning of this subsection. The construction of horospheres and the underlying ideas follow those in \cite{Ger09}. The main difference is that the global $2$-cocompactness assumption in \cite{Ger09} is replaced here by that for the orbit of a non-conical point. Consequently, the key properties of horospheres and cusp regions must be established under this weaker hypothesis.

Suppose that a group $G$ admits a topologically minimal convergence action on $M$, i.e. $M=\Lambda G$.
We fix a non-conical point $\xi\in \Lambda^{nc}G$ and assume that there exists a compact subset $K$ of $\Theta^2(M)$ such that $\Theta^2(G\xi)\subset GK$.

Fix $k\ge 3$ and let $\textbf{e}$ be a self-linked entourage such that $\textbf{e}^{2k+1}\subset \Theta^2(M)\setminus K$.
Let $A=G\textbf{e}\subset \mathrm{Ent}(M)$.

\begin{definition}
    A set of entourages is \textit{bounded} if every element is linked with a fixed entourage.

    A set of entourages is \textit{discrete in} $\mathrm{Ent}(M)$ if every bounded subset is finite.
\end{definition}

Here, boundedness and discreteness are understood with respect to the visibility structure rather than a metric on $M$.

\begin{lemma}\cite[Proposition~P]{Ger09}\label{orbitDiscrete}
    If the action of $G$ on $M$ is a convergence group action, then every orbit in $\mathrm{Ent}(M)$ is discrete.
\end{lemma}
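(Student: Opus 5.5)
We need to prove Lemma \ref{orbitDiscrete} (Gerasimov's Proposition P): for a convergence action, every $G$-orbit $G\textbf{u}$ in $\mathrm{Ent}(M)$ is discrete, i.e. for every entourage $\textbf{v}$ only finitely many translates $g\textbf{u}$ are linked with $\textbf{v}$. The plan is to argue by contradiction using the convergence property.

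Suppose $\textbf{u},\textbf{v}$ are entourages and there are infinitely many distinct elements $g\textbf{u}$ with $g\textbf{u}\#\textbf{v}$. Pick corresponding distinct elements $g_n\in G$; distinct translates force distinct group elements. Pass to a subsequence so that $\{g_n\}$ is a convergence sequence with attracting point $b$ and repelling point $a$. Choose an open neighbourhood $W$ of $a$ small enough that $\overline W$ is $\textbf{u}$-small. This is possible because $\textbf{u}$ contains an open neighbourhood of the diagonal, so $\{a,a\}$ has a product-type neighbourhood $W\times W$ inside $\textbf{u}$. Similarly, choose an open neighbourhood $V$ of $b$ with $V$ $\textbf{v}$-small.

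By locally uniform convergence on $M\setminus\{a\}$, for large $n$ we have $g_n(M\setminus W)\subset V$. Set $A_n=g_n W$ and $B_n=g_n(M\setminus W)$, so $M=A_n\cup B_n$. Since $W$ is $\textbf{u}$-small, $A_n$ is $g_n\textbf{u}$-small, because $g_n$ carries $\textbf{u}$-edges to $g_n\textbf{u}$-edges. Since $B_n\subset V$, $B_n$ is $\textbf{v}$-small. Hence $g_n\textbf{u}\unlinked\textbf{v}$, contradicting linkedness.

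The only delicate point is that the paper defines unlinkedness with the $\textbf{u}$-small piece first. Since linkedness is symmetric in the intended sense (the decomposition $M=A\cup B$ has no preferred order), this is harmless. If the convention requires order, one instead takes the roles via $g_n^{-1}$, using the fact that $\{g_n^{-1}\}$ is a convergence sequence with the attracting and repelling points swapped.

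Finally, an arbitrary bounded subset of the orbit is linked with a fixed $\textbf{v}$, so by the above it is finite, which gives discreteness. The main (mild) obstacle is the existence of a $\textbf{u}$-small neighbourhood of a point. This uses only that $\textbf{u}$ contains an open set containing $\Delta(M)$, together with the fact that the product topology on $M^2$ descends to $S^2(M)$.
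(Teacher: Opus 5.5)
Your proof is correct. The paper gives no argument of its own for this lemma and only cites Gerasimov's Proposition~P, so there is no in-paper proof to compare against. Your argument is the natural direct one from the convergence property, and it is complete:
\begin{itemize}
\item the translates $g_n\textbf{u}$ are distinct, so the $g_n$ are distinct and a convergence subsequence exists;
\item $M\setminus W$ is a compact subset of $M\setminus\{a\}$, so locally uniform convergence gives $g_n(M\setminus W)\subset V$ for large $n$;
\item the decomposition $M=g_nW\cup g_n(M\setminus W)$ then witnesses $g_n\textbf{u}\unlinked\textbf{v}$.
\end{itemize}
It also works for an arbitrary compactum $M$.

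Three small remarks:
\begin{itemize}
\item Your caveat about ordering is unnecessary. The relation $\unlinked$ is symmetric by definition, since you can swap $A$ and $B$. In any case, your decomposition already puts the $g_n\textbf{u}$-small piece first and the $\textbf{v}$-small piece second.
\item You only use that $W$ is $\textbf{u}$-small, not $\overline{W}$.
\item The argument never uses that $g_n\textbf{u}\ne g_m\textbf{u}$ beyond extracting distinct group elements. It therefore proves the stronger statement that $\{g\in G: g\textbf{u}\#\textbf{v}\}$ is finite for all entourages $\textbf{u},\textbf{v}$. Applied with $\textbf{v}=\textbf{u}$, together with the standing self-linkedness assumption, this shows that stabilizers of entourages are finite.
\end{itemize}
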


This property allows us to control the number of entourage orbits that interact with a fixed bounded region.
We now use these notions to describe the horosphere associated with a non-conical point.

\begin{definition}
    For $\xi\in M$ and $k\in \mathbb{N}$, we define the \textit{horosphere}
$$
H_{k}(\xi)=\{\textbf{u}\in A: \text{there is no }\textbf{v}\in A \text{ such that } \textbf{u}-\textbf{v}-\xi(k)\}.
$$

    A point $\xi\in M$ is called $k$-\textit{conical} if $H_{k}(\xi)=\varnothing$.
\end{definition}

\begin{lemma}\cite[Lemma~C2]{Ger09}\label{C2}
    Every $2$-conical point is conical in the sense of convergence group actions.
\end{lemma}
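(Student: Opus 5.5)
The plan is to turn the emptiness of $H_2(\xi)$ into an infinite sequence of group elements that keeps a pair of points containing $\xi$ uniformly separated, and then read off conicality from the convergence property. Since $H_2(\xi)=\varnothing$, every $\textbf{u}\in A=G\textbf{e}$ admits some $\textbf{v}\in A$ with $\textbf{u}-\textbf{v}-\xi(2)$. Starting from $\textbf{u}_0=\textbf{e}$, I would choose inductively $\textbf{u}_{n}=g_n\textbf{e}\in A$ with $\textbf{u}_{n-1}-\textbf{u}_{n}-\xi(2)$. This gives a chain of entourages in the orbit of $\textbf{e}$ marching towards $\xi$.

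\textbf{Step 1: the $g_n$ are eventually pairwise distinct.} I would show that $\textbf{u}_m-\textbf{u}_n-\xi(1)$ holds for all $m<n$. The intended tool is Lemma~\ref{T2}, applied with $\textbf{a}=\textbf{u}_m$, $\textbf{b}=\textbf{u}_{n-1}$, $\textbf{c}=\textbf{u}_n$, $\textbf{d}=\xi$. This relation says that the shadow of $\textbf{u}_m$ seen from $\textbf{u}_n$ is nonempty and disjoint from $\xi$, and it should force $\textbf{u}_m\neq\textbf{u}_n$. Granting it, infinitely many distinct entourages $g_n\textbf{e}$ appear, so infinitely many distinct $g_n$ appear, and after passing to a subsequence the $g_n$ are pairwise distinct.

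This is the step I expect to be the main obstacle. Applying Lemma~\ref{T2} degrades the constant $k$ by one at each use, so a naive induction starting from $k=2$ does not close. I would first try to get distinctness more cheaply. Suppose the sequence took only finitely many values. Then some entourage $\textbf{u}$ would satisfy $\textbf{u}-\cdots-\textbf{u}$ through a short cycle of relations of strength $2$, and I would try to derive a contradiction from a single application of Lemma~\ref{T2} together with the definition of shadows. Failing that, I would use Lemma~\ref{orbitDiscrete}: it suffices that all $\textbf{u}_n$ are linked with one fixed entourage while being distinct as a set. Alternatively, I would rechoose the chain (passing to every other term) so that the relations are strong enough for the induction to survive.

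\textbf{Step 2: pulling back to a separated pair.} Fix a point $p\in\mathrm{sh}_{\textbf{e}}\textbf{u}_0$ distinct from $\xi$, which exists because $\textbf{u}_0=\textbf{e}$ is self-linked. The relation $\textbf{u}_0-\textbf{u}_n-\xi(1)$ says that $\delta_{g_n\textbf{e}}(p',\xi)>1$ for points $p'$ in the shadow of $\textbf{u}_0$, in particular for $p'=p$. Applying $g_n^{-1}$ gives $\{g_n^{-1}p,\,g_n^{-1}\xi\}\notin\textbf{e}$. So the pairs $\{g_n^{-1}p,g_n^{-1}\xi\}$ lie in the compact set $S^2(M)\setminus\mathrm{int}\,\textbf{e}\subset\Theta^2(M)$.

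\textbf{Step 3: conclude.} Pass to a subsequence so that $\{g_n^{-1}\}$ is a convergence sequence with attracting point $b$ and repelling point $a$. If $\xi\neq a$ and $p\neq a$, then both $g_n^{-1}\xi$ and $g_n^{-1}p$ would converge to $b$, contradicting the separation from Step 2. Hence one of them is $a$.

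If $a=\xi$, then $g_n^{-1}p\to b$, while $g_n^{-1}\xi$ stays uniformly away from $g_n^{-1}p$. Therefore $g_n^{-1}\xi$ converges to a point different from $b$ (after passing to a subsequence), which is exactly the definition of $\xi$ being conical.

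If instead $a=p$, I would modify Step 2. Fix two distinct candidate points $p_1,p_2$ in the shadow of $\textbf{u}_0$. The repelling point can equal at most one of them, and running the same argument with the other point puts us back in the case $a=\xi$, which finishes the proof.
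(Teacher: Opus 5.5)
The paper does not prove this lemma; it is quoted from \cite[Lemma~C2]{Ger09}, so your argument has to stand on its own. The overall shape is reasonable: a chain of $(2)$-relations towards $\xi$, pulled back by $g_n^{-1}$ to the fixed entourage $\textbf{e}$, then convergence dynamics. Step~3 is correct once Step~2 is available. There are, however, two genuine gaps.

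\textbf{Step~1 is not proved, and it is needed for more than distinctness.} Step~2 uses the anchored relation $\textbf{u}_0-\textbf{u}_n-\xi(1)$ for infinitely many $n$. As you note, Lemma~\ref{T2} lowers the constant by one at each use. From the $(2)$-links you therefore get only $\textbf{u}_{n-2}-\textbf{u}_n-\xi(1)$, at best $\textbf{u}_{n-3}-\textbf{u}_n-\xi(0)$, and then the lemma stops applying. None of your remedies closes this:
\begin{itemize}
\item Passing to every other term replaces $(2)$-links by $(1)$-links. These are weaker, and Lemma~\ref{T2} cannot even use them in its second hypothesis.
\item Lemma~\ref{orbitDiscrete} says bounded subsets of $A$ are finite. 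That is a finiteness statement; it cannot produce infinitely many distinct $\textbf{u}_n$, let alone their relation to $\textbf{u}_0$.
\item The cycle idea excludes only repetitions a few steps apart, because the conclusion of Lemma~\ref{T2} contains unlinkedness and elements of $A$ are self-linked. A long cycle would need roughly as many applications of Lemma~\ref{T2} as its length.
\end{itemize}
What is missing is a non-degrading transitivity of betweenness along the chain (geometrically, that a chain of $2$-steps towards $\xi$ behaves like a quasi-geodesic). Lemma~\ref{T2} as stated does not provide this.

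\textbf{Step~2 misidentifies the shadow.} You pick $p\in\mathrm{sh}_{\textbf{e}}\textbf{u}_0=\mathrm{sh}_{\textbf{e}}\textbf{e}$. Shadows $\mathrm{sh}_{\textbf{u}}\textbf{v}$ are defined only when $\textbf{u}\unlinked\textbf{v}$, and $\textbf{e}$ is self-linked. Formally $\mathrm{Sh}_{\textbf{e}}\textbf{e}=\varnothing$, so this ``shadow'' is all of $M$ and carries no information. The relation $\textbf{u}_0-\textbf{u}_n-\xi(1)$ concerns $\mathrm{sh}_{\textbf{u}_n}\textbf{u}_0$, the shadow of $\textbf{u}_0$ seen from $\textbf{u}_n$, and this set changes with $n$. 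Nothing you have shown puts a fixed $p$, let alone two points $p_1\neq p_2$, into it for infinitely many $n$. So $\{p,\xi\}\notin\textbf{u}_n$ is unjustified, and the two-point trick in Step~3 has no input.

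You can avoid fixed points as follows. Let $h_n=g_n^{-1}$, passed to a convergence subsequence with attracting point $b$. Then $h_n\,\mathrm{sh}_{\textbf{u}_n}\textbf{u}_0=\mathrm{sh}_{\textbf{e}}(h_n\textbf{e})$. For any open $\textbf{e}$-small $U\ni b$, the set $g_n(M\setminus U)$ collapses to the repelling point of $\{h_n\}$, so $M\setminus U$ is eventually $h_n\textbf{e}$-small. Hence $\mathrm{sh}_{\textbf{e}}(h_n\textbf{e})\subset U$ eventually. If these shadows are nonempty, $\delta_{\textbf{e}}(\mathrm{sh}_{\textbf{e}}(h_n\textbf{e}),h_n\xi)>1$ forces $h_n\xi\notin U$. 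Then $\xi$ must be the repelling point, and $h_n\xi$ subconverges to a point different from $b$, which is conicality. This repair still needs the anchored relation and the infinitely many distinct $g_n$ from Step~1, together with nonemptiness of the shadows, and you have none of these.
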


In particular, $\xi$ is non-$k$-conical for every $k\ge 2$.

For an entourage $\textbf{u}\in A$ and $k\in \mathbb{N}$, we denote by
$$
P_{k}(\textbf{u})=\{\xi\in M: \textbf{u}\in H_{k}(\xi)\}.
$$

\begin{lemma}\cite[Lemma~H1]{Ger09}\label{H1}
    Let $\xi\in M$ be a non-$k$-conical point. For every entourage $\textbf{u}$, either $\textbf{u}\in H_{k}(\xi)$ or there exists $\textbf{v}\in H_{k}(\xi)$ such that $\textbf{u}-\textbf{v}-\xi(k)$.
\end{lemma}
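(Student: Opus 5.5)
The plan is to build a chain of entourages moving toward $\xi$ and to show it must stop after finitely many steps; the entourage where it stops is the required $\textbf{v}$. Assume $\textbf{u}\notin H_k(\xi)$ (otherwise there is nothing to prove), and take $\textbf{u}\in A$ as in the setting of the definition of $H_k(\xi)$. By the definition of $H_k(\xi)$ there is $\textbf{v}_1\in A$ with $\textbf{u}-\textbf{v}_1-\xi(k)$. If $\textbf{v}_1\in H_k(\xi)$ we are done. Otherwise there is $\textbf{v}_2\in A$ with $\textbf{v}_1-\textbf{v}_2-\xi(k)$, and we iterate. Two things must be checked. First, every $\textbf{v}_n$ in the chain still lies $k$-between $\textbf{u}$ and $\xi$. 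Second, the chain cannot be continued indefinitely. The hypothesis that $\xi$ is non-$k$-conical enters only through $H_k(\xi)\neq\varnothing$, which keeps the statement from being vacuous. The real work is finiteness.

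For the first point I need a transitivity statement: if $\textbf{u}-\textbf{v}-\xi(k)$ and $\textbf{v}-\textbf{w}-\xi(k)$, then $\textbf{u}-\textbf{w}-\xi(k)$. Lemma~\ref{T2} gives this only with a loss of one in the constant: from $\textbf{u}-\textbf{v}-\xi(k)$ and $\textbf{v}-\textbf{w}-\xi(2)$ (available since $k\ge 3$) it yields $\textbf{u}-\textbf{w}-\xi(k-1)$. Iterating this naively would exhaust the constant, so I would argue directly with shadows instead. Since $\textbf{v}-\textbf{w}-\xi(k)$, the complement of a $\textbf{w}$-small shadow set representing $\mathrm{sh}_{\textbf{w}}\textbf{v}$ is $\textbf{v}$-small. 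Because $\textbf{u}-\textbf{v}-\xi(k)$, the set $\mathrm{sh}_{\textbf{v}}\textbf{u}$ is $\delta_{\textbf{v}}$-far from $\xi$. From these two facts I would deduce that a shadow set for $\mathrm{sh}_{\textbf{w}}\textbf{u}$ can be chosen inside one for $\mathrm{sh}_{\textbf{w}}\textbf{v}$. Then $\delta_{\textbf{w}}(\mathrm{sh}_{\textbf{w}}\textbf{u},\xi)\ge\delta_{\textbf{w}}(\mathrm{sh}_{\textbf{w}}\textbf{v},\xi)>k$. This lossless monotonicity of shadows is the step I expect to be the main obstacle. If it only holds up to a $1$-neighbourhood, my fallback is to run the chain argument with the relation $(2)$ in place of $(k)$. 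I would then recover the constant $k$ at the last link by a single application of Lemma~\ref{T2} with the stronger hypothesis $(k+1)$ on the first link.

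For the second point, let $S=\{\textbf{w}\in A:\textbf{u}-\textbf{w}-\xi(1)\}$. Every $\textbf{w}\in S$ is linked with $\textbf{u}$. Indeed, if $\textbf{u}\unlinked\textbf{w}$, then $\mathrm{sh}_{\textbf{w}}\textbf{u}$ would have to be $\textbf{w}$-close to every point outside a $\textbf{u}$-small set, including $\xi$; this should be a short check using that $\textbf{u}$ is self-linked. Hence $S$ is bounded. Since $A=G\textbf{e}$ is a single $G$-orbit, Lemma~\ref{orbitDiscrete} shows that $S$ is finite. By the first point every $\textbf{v}_n$ lies in $S$.

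It remains to see that the chain does not repeat. The relation $\textbf{v}-\textbf{w}-\xi(2)$ is irreflexive, because $\mathrm{sh}_{\textbf{v}}\textbf{v}$ contains $\xi$ up to distance $1$. Combined with transitivity, the relation $\textbf{v}-\textbf{w}-\xi(k)$ is therefore a strict partial order on the finite set $S$. So the chain $\textbf{v}_1,\textbf{v}_2,\dots$ terminates at some $\textbf{v}_n$ admitting no $\textbf{w}\in A$ with $\textbf{v}_n-\textbf{w}-\xi(k)$. By definition $\textbf{v}_n\in H_k(\xi)$, and $\textbf{u}-\textbf{v}_n-\xi(k)$ holds by transitivity.
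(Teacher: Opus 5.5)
The paper does not prove this lemma; it quotes it from Gerasimov. Judged on its own, your argument has a genuine gap in the finiteness step. The relation $\textbf{u}-\textbf{w}-\xi(1)$ is a statement about $\mathrm{sh}_{\textbf{w}}\textbf{u}$, and that shadow is only defined when $\textbf{u}\unlinked\textbf{w}$. So every element of your set $S$ is \emph{unlinked} from $\textbf{u}$, which is the opposite of what you assert. Nothing makes $S$ bounded, and Lemma~\ref{orbitDiscrete} yields no finiteness. In general $S$ is infinite. Take a free group acting on the boundary of its Cayley tree, with $\textbf{e}$ the entourage of pairs whose geodesic misses a large ball about a vertex. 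Then every orbit entourage coming from a vertex far out along the ray from $\textbf{u}$ to a boundary point $\eta$ lies $k$-between $\textbf{u}$ and $\eta$. The telltale symptom is that you never use that $\xi$ is non-$k$-conical; that hypothesis is not there to avoid vacuity, since without it the conclusion is false. If your argument were valid, it would give $H_k(\eta)\neq\varnothing$ for every $\eta\in M$, so no point would be $k$-conical. In the tree example every boundary point is $k$-conical.

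Your transitivity step can be made rigorous, and it is lossless. Every $S\in\mathrm{Sh}_{\textbf{w}}\textbf{v}$ avoids $\xi$, so $M\setminus S$ is a $\textbf{v}$-small set containing $\xi$. Each member of $\mathrm{Sh}_{\textbf{v}}\textbf{u}$ is at $\delta_{\textbf{v}}$-distance $\ge k\ge 2$ from $\xi$, so it lies inside every such $S$. This gives $\mathrm{sh}_{\textbf{w}}\textbf{u}\subset\mathrm{sh}_{\textbf{w}}\textbf{v}$. Your fallback via Lemma~\ref{T2} would not work: the first link only carries the constant $k$, and each application loses $1$.

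What is missing is using the hypothesis to stop the chain. Suppose $\textbf{u}=\textbf{v}_0,\textbf{v}_1,\dots$ were infinite.
\begin{itemize}
\item Its elements are distinct, so by Lemma~\ref{orbitDiscrete} only finitely many are linked with any fixed entourage.
\item From this, a subsequence converges in $\tilde M$ to a point of $M$. That point must be $\xi$, because $\mathrm{sh}_{\textbf{v}_i}\textbf{u}$ stays $\delta_{\textbf{v}_i}$-far from $\xi$.
\item Take any $\textbf{h}\in A$ and an open neighbourhood $U$ of $\xi$ that is both $\textbf{h}$-small and $\textbf{u}$-small. For large $i$, $M\setminus U$ is $\textbf{v}_i$-small, so it lies in both $\mathrm{Sh}_{\textbf{v}_i}\textbf{h}$ and $\mathrm{Sh}_{\textbf{v}_i}\textbf{u}$. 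Hence it is at $\delta_{\textbf{v}_i}$-distance $\ge k$ from $\xi$.
\item The inclusion argument above then gives $\mathrm{sh}_{\textbf{v}_j}\textbf{h}\subset\mathrm{sh}_{\textbf{v}_j}\textbf{v}_i$ for $j>i$, that is, $\textbf{h}-\textbf{v}_j-\xi(k)$.
\end{itemize}
So no $\textbf{h}\in A$ lies in $H_k(\xi)$, contradicting non-$k$-conicality. Discreteness is what pushes an infinite chain out to $\xi$; it is not what makes $S$ finite.
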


\begin{lemma}\cite[Lemma~H2]{Ger09}\label{H2}
    If $\textbf{a}\in A$ and $\textbf{u}\unlinked \textbf{a}$, then $\operatorname{diam}_{\delta_\textbf{a}}(P_{k}(\textbf{u}))\le 2k+1$ for every $k\in \mathbb{N}$.
\end{lemma}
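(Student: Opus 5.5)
This lemma is quoted from \cite[Lemma~H2]{Ger09}, so the plan is to reproduce Gerasimov's argument; it uses only the definitions of horospheres and lying-between together with Lemma~\ref{T2}. Throughout, write $\delta=\delta_{\textbf{a}}$ and $s(\cdot)=\mathrm{sh}_{\textbf{a}}(\cdot)$.

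\textbf{Step 1: reduction.} Take two points $\xi,\eta\in P_k(\textbf{u})$, so that $\textbf{u}\in H_k(\xi)\cap H_k(\eta)$. Since $\textbf{a}\in A$, the definition of $H_k(\xi)$ forbids $\textbf{u}-\textbf{a}-\xi(k)$. The condition $\textbf{u}\unlinked\textbf{a}$ is given, and points of $M$ are always unlinked with $\textbf{a}$. Hence failure of $k$-lying-between means exactly
$$
\delta\bigl(s(\textbf{u}),\{\xi\}\bigr)\le k ,\qquad\text{and likewise}\qquad \delta\bigl(s(\textbf{u}),\{\eta\}\bigr)\le k .
$$
So both $\xi$ and $\eta$ lie within $\delta$-distance $k$ of the set $s(\textbf{u})$.

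\textbf{Step 2: the shadow is small.} Next I show that $s(\textbf{u})$ has $\delta$-diameter at most $1$. Since $\textbf{u}\unlinked\textbf{a}$, the family $\mathrm{Sh}_{\textbf{u}}\textbf{a}$ is nonempty. The shadow $\mathrm{sh}_{\textbf{a}}\textbf{u}$ is the intersection of sets $S$ that are $\textbf{a}$-small with $\textbf{u}$-small complement. The symmetric family $\mathrm{Sh}_{\textbf{a}}\textbf{u}$ is also nonempty, because complements interchange the two families. Therefore $s(\textbf{u})$ is contained in an $\textbf{a}$-small set and has $\delta$-diameter at most $1$.

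\textbf{Step 3: triangle inequality.} Choose $x,y\in s(\textbf{u})$ with $\delta(\xi,x)\le k$ and $\delta(\eta,y)\le k$. Then
$$
\delta(\xi,\eta)\le \delta(\xi,x)+\delta(x,y)+\delta(y,\eta)\le k+1+k=2k+1 .
$$
Taking the supremum over $\xi,\eta\in P_k(\textbf{u})$ gives the bound $2k+1$.

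\textbf{Main obstacle.} The delicate point is the bookkeeping of which shadow sits in which graph metric. The definition of lying-between uses the shadow $\mathrm{sh}_{\textbf{b}}$ computed in the middle entourage $\textbf{b}=\textbf{a}$, so I must use that $\textbf{a}$ is the middle term and lies in $A$. This is what makes the horosphere condition applicable to $\textbf{a}$. I also need to confirm that $\mathrm{sh}_{\textbf{a}}\textbf{u}$ is nonempty; if it were empty, distances to it would be infinite and the condition above would fail vacuously in the wrong direction. Nonemptiness should follow from self-linkedness of $\textbf{u}$, since two disjoint sets in the family would make $M$ a union of a $\textbf{u}$-small and a $\textbf{u}$-small set, i.e. $\textbf{u}\unlinked\textbf{u}$. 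If the intersection is defined differently in \cite{Ger09}, I would instead pick any single $S\in\mathrm{Sh}_{\textbf{a}}\textbf{u}$ and run the same estimate with $S$ in place of $s(\textbf{u})$.
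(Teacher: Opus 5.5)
Your proof is correct. The paper gives no argument of its own and only cites \cite[Lemma~H2]{Ger09}, and what you wrote is the standard one-line proof: since $\textbf{a}\in A$, membership $\xi\in P_k(\textbf{u})$ forces $\delta_{\textbf{a}}(\mathrm{sh}_{\textbf{a}}\textbf{u},\xi)\le k$, the shadow lies in an $\textbf{a}$-small member of the nonempty family $\mathrm{Sh}_{\textbf{a}}\textbf{u}$, and the triangle inequality gives $2k+1$. Your closing worry about nonemptiness is unnecessary: if $\mathrm{sh}_{\textbf{a}}\textbf{u}$ were empty, then $\textbf{u}-\textbf{a}-\xi(k)$ would hold for every $\xi$ and $P_k(\textbf{u})=\varnothing$. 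Also, the self-linkedness argument you sketch would only show that members of $\mathrm{Sh}_{\textbf{a}}\textbf{u}$ pairwise intersect, not that their full intersection is nonempty.
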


The next lemma is the key separation property of the orbit $G\xi$. It shows that distinct orbit points can be separated by an entourage lying between them.

\begin{lemma}\label{separatingOrbit}
    For any $g, h\in G$ with $g\xi\ne h\xi$, there exists $\textbf{u}\in A$ such that $g\xi-\textbf{u}-h\xi(2k+1)$.
\end{lemma}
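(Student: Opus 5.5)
We reduce to a fixed compact set via the hypothesis $\Theta^2(G\xi)\subset GK$, and then transport an entourage from the orbit $A=G\textbf{e}$ by the group action.

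First, since $g\xi\ne h\xi$, the pair $\{g\xi,h\xi\}$ lies in $\Theta^2(G\xi)$. So there are $f\in G$ and $\{x,y\}\in K$ with $\{g\xi,h\xi\}=f\{x,y\}$. Since $A$ is $G$-invariant and the relation $\textbf{a}-\textbf{u}-\textbf{c}(m)$ is $G$-equivariant (shadows and the graph metrics $\delta_{\textbf{u}}$ are defined naturally), it suffices to find $\textbf{u}\in A$ with $x-\textbf{u}-y(2k+1)$. The entourage $f\textbf{u}$ then does the job.

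Next we use the choice of $\textbf{e}$. We have $\textbf{e}^{2k+1}\subset\Theta^2(M)\setminus K$ and $\{x,y\}\in K$, so $\{x,y\}\notin\textbf{e}^{2k+1}$. By definition of $\textbf{e}^{n}$ this means $\delta_{\textbf{e}}(x,y)>2k+1$. For a point $\xi'\in M$ we have $\mathrm{sh}_{\textbf{e}}\xi'=\{\xi'\}$, and points of $M$ count as unlinked with everything where required. Hence $\delta_{\textbf{e}}(\mathrm{sh}_{\textbf{e}}x,\mathrm{sh}_{\textbf{e}}y)=\delta_{\textbf{e}}(x,y)>2k+1$, which is exactly $x-\textbf{e}-y(2k+1)$. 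So $\textbf{u}=\textbf{e}\in A$ works for the normalized pair.

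Transporting back, set $\textbf{u}=f\textbf{e}\in A$. Since $\delta_{f\textbf{e}}(fx,fy)=\delta_{\textbf{e}}(x,y)$, we obtain $g\xi-\textbf{u}-h\xi(2k+1)$; the order of the endpoints is irrelevant by symmetry.

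The only point needing care is the convention that $\textbf{e}^{n}$ is taken inside $S^2(M)$ as the set of pairs at $\delta_{\textbf{e}}$-distance at most $n$. One must check that $\{x,y\}\notin\textbf{e}^{2k+1}$ indeed gives strict inequality $>2k+1$, including the case where $x,y$ lie in different components of the graph, where the distance is $\infty$. Equivariance also needs checking: $g$ maps $\textbf{e}$-edges to $g\textbf{e}$-edges. Both are routine, so the main obstacle is purely bookkeeping.
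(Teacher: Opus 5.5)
Your proof is correct and follows the same argument as the paper. You use $\Theta^2(G\xi)\subset GK$ to write $\{g\xi,h\xi\}=f\{x,y\}$ with $\{x,y\}\in K$, deduce $\delta_{\textbf{e}}(x,y)>2k+1$ from $\textbf{e}^{2k+1}\cap K=\varnothing$, and take the $f$-translate of $\textbf{e}$. Your choice $\textbf{u}=f\textbf{e}$ is the correct translate under the natural action, since $\delta_{f\textbf{e}}(a,b)=\delta_{\textbf{e}}(f^{-1}a,f^{-1}b)$. The paper instead writes $f^{-1}\textbf{e}$, which is a convention slip that does not affect the lemma.
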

\begin{proof}
    By our assumption, there exists $f\in G$ such that $\{g\xi, h\xi\}\in fK$.
    Hence $\delta_{f^{-1}\textbf{e}}(g\xi, h\xi)>2k+1$.
    Therefore, the entourage $\textbf{u}=f^{-1}\textbf{e}\in A$ has the required property.
\end{proof}

The preceding separation lemma implies the following finiteness property, which will allow us to obtain finitely many horosphere orbits.

\begin{lemma}\label{finitenessHoroball}
    For every $\textbf{u}\in A$, the intersection $P_{k}(\textbf{u})\cap G\xi$ is finite.
\end{lemma}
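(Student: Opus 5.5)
We want to show that $P_k(\textbf{u})\cap G\xi$ is finite for a fixed $\textbf{u}\in A$. The approach is to combine the separation property of Lemma~\ref{separatingOrbit} with the diameter bound of Lemma~\ref{H2} and the discreteness of entourage orbits from Lemma~\ref{orbitDiscrete}.

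Suppose that $g\xi\ne h\xi$ both lie in $P_k(\textbf{u})$, so that $\textbf{u}\in H_k(g\xi)\cap H_k(h\xi)$. By Lemma~\ref{separatingOrbit} there is $\textbf{v}\in A$ with $g\xi-\textbf{v}-h\xi(2k+1)$. In other words, $\delta_{\textbf{v}}(g\xi,h\xi)>2k+1$, so $\operatorname{diam}_{\delta_\textbf{v}}(P_k(\textbf{u}))>2k+1$.

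We now argue by cases on whether $\textbf{u}$ and $\textbf{v}$ are linked.

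If $\textbf{u}\unlinked\textbf{v}$, then Lemma~\ref{H2}, applied with $\textbf{a}=\textbf{v}\in A$, gives $\operatorname{diam}_{\delta_\textbf{v}}(P_k(\textbf{u}))\le 2k+1$. This contradicts the previous paragraph.

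Hence $\textbf{u}\#\textbf{v}$. So every pair of distinct points of $P_k(\textbf{u})\cap G\xi$ is separated, in the sense of Lemma~\ref{separatingOrbit}, by some $\textbf{v}$ in the set
$$
B=\{\textbf{v}\in A:\textbf{v}\#\textbf{u}\}.
$$

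The set $B$ is bounded by definition, since each of its elements is linked with $\textbf{u}$. Because $A=G\textbf{e}$ is a single orbit, Lemma~\ref{orbitDiscrete} shows that $B$ is finite.

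Each $\textbf{v}\in B$ splits $M$ into small pieces. For each $\textbf{v}\in B$, the relation "$\delta_{\textbf{v}}(x,y)\le 2k+1$" should partition $P_k(\textbf{u})\cap G\xi$ into boundedly many classes. Then two points lying in the same class for every $\textbf{v}\in B$ must coincide, and finiteness of $B$ gives finiteness of $P_k(\textbf{u})\cap G\xi$.

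The main obstacle is the claim that each single $\textbf{v}$ distinguishes only finitely many classes. A fixed $\textbf{v}$ could in principle separate infinitely many orbit points.

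To handle this, I would first try to sharpen the separating entourage using the horosphere condition, via Lemma~\ref{H1} and Lemma~\ref{T2}. Since $\textbf{u}\in H_k(g\xi)$, no element of $A$ lies $k$-between $\textbf{u}$ and $g\xi$. A separator $\textbf{v}$ with $g\xi-\textbf{v}-h\xi(2k+1)$ must then have $\textbf{u}$ within $\delta_\textbf{v}$-distance about $k$ of one side, and by symmetry of the other side as well. This should force $\textbf{v}$ to be unlinked from $\textbf{u}$, which is the contradictory case above.

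If this sharpening works, it would show directly that $P_k(\textbf{u})\cap G\xi$ has at most one point. If it only partially works, I would fall back on the finite-$B$ counting argument, bounding the number of classes by covering $M$ with $\textbf{v}$-small sets.
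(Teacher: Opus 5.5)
Up to the finiteness of $B=\{\textbf{v}\in A:\textbf{v}\#\textbf{u}\}$ you reproduce the paper's proof exactly: Lemma~\ref{separatingOrbit} gives a separator, Lemma~\ref{H2} forces it to be linked with $\textbf{u}$, and Lemma~\ref{orbitDiscrete} makes $B$ finite. The problem is the last step. You call it the main obstacle and leave it unresolved, and your preferred repair cannot work as described.

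The ``sharpening'' via Lemmas~\ref{H1} and~\ref{T2} is circular. The condition $\textbf{u}\in H_k(g\xi)$ only says that no $\textbf{v}\in A$ satisfies $\textbf{u}-\textbf{v}-g\xi(k)$. That relation is only defined when $\textbf{v}\unlinked\textbf{u}$, because it is phrased through $\mathrm{sh}_{\textbf{v}}\textbf{u}$. For the separators you actually have, which are linked with $\textbf{u}$, the horosphere condition therefore says nothing and cannot force them to be unlinked. For unlinked separators it merely reproduces Lemma~\ref{H2}. Its advertised conclusion, that $P_k(\textbf{u})\cap G\xi$ has at most one point, is also much stronger than the statement. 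Nothing available supports it, since Lemma~\ref{H2} only bounds a diameter.

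In fact there is no obstacle, and your fallback is the right argument once it is made precise. A single $\textbf{v}$ can separate infinitely many pairs, but by compactness it cannot separate infinitely many points pairwise.

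Put $\textbf{w}=\bigcap_{\textbf{v}\in B}\textbf{v}$. As a finite intersection of neighborhoods of $\Delta(M)$, it is an entourage. If $x\ne y$ lie in $P_k(\textbf{u})\cap G\xi$, some $\textbf{v}\in B$ has $\delta_{\textbf{v}}(x,y)>2k+1\ge 1$. Hence $\{x,y\}\notin\textbf{v}\supseteq\textbf{w}$.

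Each point of $M$ has an open neighborhood $U$ with $S^2(U)\subset\textbf{w}$: pull back to $M^2$ an open neighborhood of the diagonal contained in $\textbf{w}$. Compactness then yields finitely many such sets $U_1,\dots,U_N$ covering $M$. Each $U_i$ contains at most one point of $P_k(\textbf{u})\cap G\xi$, so this set has at most $N$ points.

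Equivalently, take one finite cover by $\textbf{v}$-small open sets for each $\textbf{v}\in B$ and record the tuple of indices. This replaces your ``classes'' of the relation $\delta_{\textbf{v}}\le 2k+1$, which is not transitive and so does not partition anything.

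This uniform discreteness with respect to $\textbf{w}$ is exactly what the paper's closing phrase ``discrete in the compact space $M$, and hence finite'' relies on. Mere discreteness would not suffice, since an infinite discrete subset of a compact space need not be closed.
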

\begin{proof}
    Suppose that $g\xi\ne h\xi$ are points in $P_{k}(\textbf{u})\cap G\xi$.
    By Lemma~\ref{separatingOrbit}, there exists $\textbf{v}\in A$ such that $g\xi-\textbf{v}-h\xi(2k+1)$.
    Lemma~\ref{H2} implies that $\textbf{v}\#\textbf{u}$.
    By Lemma~\ref{orbitDiscrete}, the orbit $A$ is discrete in $\mathrm{Ent}(M)$, and hence there are only finitely many such $\textbf{v}$.
    This implies that $P_{k}(\textbf{u})\cap G\xi$ is discrete in the compact space $M$, and hence finite.
\end{proof}

\subsection{Proof of Theorem~\ref{topologicalVersion}}
We now prove Theorem~\ref{topologicalVersion}. The implications $(1)\implies(2)$ and $(2)\implies(3)$ are straightforward, whereas the converse implication $(3)\implies(1)$ is the main part of the proof and follows from the horoball construction developed in the previous subsection.
\\
    \paragraph{\textit{(1)}$\implies$\textit{(2)}.}
    Suppose $\xi$ is a conical point in $M$.
    Then there exists a sequence $\{g_n\}$ in $G$ and points $a\ne b\in M$ such that $g_n(M-\{\xi\})\to a$ and $g_n\xi\to b$.
    Choose a compact neighborhood $K$ of $\{a, b\}$ in $\Theta^2(M)$.
    Hence for every $\eta\in M\setminus\{\xi\}$, we have $\{g_n\xi, g_n\eta\}\in K$ for all sufficiently large $n\in \mathbb{N}$.
    Consequently, $\{\xi, \eta\}\in GK$ for every $\eta\in M\setminus\{\xi\}$.

    Suppose $\xi$ is a bounded parabolic point in $M$.
    Then the action of the stabilizer $Stab_G(\xi)$ on $M\setminus\{\xi\}$ is cocompact.
    Choose a compact subset $D$ of $M\setminus\{\xi\}$ such that $Stab_G(\xi)\cdot D=M\setminus\{\xi\}$.
    Let $K=\{\{\xi, \zeta\}\in \Theta^2(M): \zeta\in D\}$.
    Then $K$ is compact and $\{\xi, \eta\}\in Stab_G(\xi)\cdot K\subset GK$.
\\
    \paragraph{\textit{(2)}$\implies$\textit{(3)}.}
    This follows directly from the fact that $G$ acts on $M$ by homeomorphisms.
\\
    \paragraph{\textit{(3)}$\implies$\textit{(1)}.}
    Suppose $\xi\in \Lambda^{nc}G$ and the orbit $G\xi$ is discrete with respect to $d_u$.
    Consider the set
    $$X=\{(\textbf{u}, g\xi)\in A\times G\xi: \textbf{u}\in H_{k}(g\xi)\}.$$
    The group $G$ acts on $X$ by $h(\textbf{u}, g\xi)=(h\textbf{u}, hg\xi)$.
    On the one hand, by Lemma~\ref{finitenessHoroball}, $X/G=\{[(\textbf{e}, g\xi)]: g\xi\in P_{k}(\textbf{e})\}$ is finite.
    On the other hand,
    $X/G=\{[(\textbf{u}, \xi)]: \textbf{u}\in H_{k}(\xi)\}/Stab_{G}(\xi)$.
    Therefore, $H_{k}(\xi)/Stab_{G}(\xi)$ is finite.

    Choose $\textbf{u}_1, \cdots, \textbf{u}_{m}\in H_{k}(\xi)$ representing the elements of $H_{k}(\xi)/Stab_{G}(\xi)$.
    Let
    $$K=\{\eta\in M: \{\xi, \eta\}\notin \textbf{u}_{i} \text{ for some }1\le i\le m\}.$$

    It is clear that $\overline{K}\subset M\setminus\{\xi\}$.
    We will show that $Stab_G(\xi)\overline{K}=M\setminus\{\xi\}$.

    For any $g\xi\ne \xi$ in $G\xi$, by Lemma~\ref{separatingOrbit} there exists $\textbf{u}\in A$ such that $\xi-\textbf{u}-g\xi(2k+1)$.
    By Lemma~\ref{H1} and Lemma~\ref{T2}, there exists $\textbf{v}\in H_{k}(\xi)$ such that $\xi-\textbf{v}-g\xi(k)$.
    Hence there exist $1\le i\le m$ and $h\in Stab_{G}(\xi)$ such that $\textbf{v}=h\textbf{u}_i$.
    Thus, $(\xi=)h^{-1}\xi-\textbf{u}_i-h^{-1}g\xi(k)$.
    By the definition of betweenness, we have $h^{-1}g\xi\in K$, i.e. $g\xi\in hK$.
    The argument above shows that $G\xi\setminus\{\xi\}\subset Stab_{G}(\xi)K$.

    Since the action is topologically minimal, the orbit $G\xi$ is dense in $M$. 
    It follows that $\overline{Stab_G(\xi)K}$ is equal to $M$.
    Hence $Stab_G(\xi)$ is infinite.
    Since $\xi$ is non-conical, the limit set $\Lambda Stab_{G}(\xi)=\{\xi\}$, i.e. $\xi$ is a parabolic point.
    (Otherwise, the group $Stab_{G}(\xi)$ would contain hyperbolic elements, a contradiction.)
    Hence the action of $Stab_{G}(\xi)$ on $M\setminus\{\xi\}$ is properly discontinuous.
    Therefore, $Stab_{G}(\xi)\overline{K}=M\setminus \{\xi\}$, i.e. $\xi$ is bounded parabolic.
\\
\paragraph{\textbf{Notation}} We denote by
$$\Lambda^{e}G:=\Lambda^{c}G\cup \Lambda^{bp}G$$
the set of \textit{expansive points}, and by
$$\Lambda^{ne}G:=\Lambda G\setminus \Lambda^{e}G$$
the set of \textit{non-expansive points}. By Theorem~\ref{topologicalVersion}, expansive points are precisely the points on which $G$ acts expansively.

\section{The Orbit Uniform Metric}\label{chapter:orbit}
From now on, we assume that a group $G$ admits a convergence action on a compact metrizable space $M$. 
In this section, we introduce the main construction of this paper: the orbit uniform metrics associated with a convergence group action. These metrics endow $M$ with an intrinsic metric structure determined by the group action and serve as a fundamental tool throughout the paper.

\begin{definition}
    Given a compatible metric $d$ on $M$, the corresponding \textit{orbit uniform metric} on $M$ is defined by
    $$
    d_{u}(\xi, \eta)=\sup_{g\in G} d(g\xi, g\eta).
    $$
\end{definition}

Note that the supremum is finite since $M$ is compact.
We first verify that $d_u$ is indeed a metric.

\begin{proposition}
    The function $d_u$ is a metric on $M$.
\end{proposition}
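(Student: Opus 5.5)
The plan is to verify the metric axioms directly from the definition, using only that each $g\in G$ acts on $M$ as a bijection and that $d$ is a metric.

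\emph{Finiteness.} Since $M$ is compact and $d$ is compatible, $d$ is bounded by $\operatorname{diam}_d(M)<\infty$. Hence $d_u(\xi,\eta)\le \operatorname{diam}_d(M)$, and $d_u$ is a well-defined real-valued function on $M\times M$.

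\emph{Positivity and symmetry.} Non-negativity and symmetry are inherited termwise: each quantity $d(g\xi,g\eta)$ is non-negative and symmetric in $\xi,\eta$, so the supremum is too. If $\xi=\eta$, every term vanishes, so $d_u(\xi,\xi)=0$. For definiteness, take the identity element $e\in G$. Then
$$
d_u(\xi,\eta)\ge d(\xi,\eta),
$$
so $d_u(\xi,\eta)=0$ forces $d(\xi,\eta)=0$, and hence $\xi=\eta$.

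\emph{Triangle inequality.} For any $g\in G$ and $\xi,\eta,\zeta\in M$,
$$
d(g\xi,g\zeta)\le d(g\xi,g\eta)+d(g\eta,g\zeta)\le d_u(\xi,\eta)+d_u(\eta,\zeta).
$$
Taking the supremum over $g$ on the left gives $d_u(\xi,\zeta)\le d_u(\xi,\eta)+d_u(\eta,\zeta)$.

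There is no real obstacle here. The only points needing care are that the supremum is finite, which follows from compactness, and that $d_u\ge d$ via $g=e$, which gives definiteness. The inequality $d_u\ge d$ is also worth recording separately, since it shows that the $d_u$-topology is finer than the original topology of $M$. Note that $d_u$ is generally not compatible with the topology of $M$, but the statement asks only that it be a metric.
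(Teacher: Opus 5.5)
Your proof is correct and follows essentially the same route as the paper: you check the metric axioms directly and get the triangle inequality from the triangle inequality for $d$ applied to $g\xi, g\eta, g\zeta$. The only differences are that you bound every term $d(g\xi,g\zeta)$ and then take the supremum, where the paper picks a $g$ nearly attaining the supremum and lets $\varepsilon\to 0$, and that you spell out definiteness via $g=e$, which the paper leaves as ``clear''.
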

\begin{proof}
    It is clear that, for every $\xi, \eta\in M$, we have $d_u(\xi, \eta)=0$ if and only if $\xi=\eta$, and that $d_u(\xi, \eta)=d_u(\eta, \xi)$.
    
    Let $\xi, \eta, \zeta$ be points in $M$.
    For any $\varepsilon>0$, there exists $g\in G$ such that $d(g\xi, g\zeta)\ge d_u(\xi, \zeta)-\varepsilon$.
    Then
    $$
        d_u(\xi, \zeta)\le d(g\xi, g\zeta)+\varepsilon\le d(g\xi, g\eta)+d(g\eta, g\zeta)+\varepsilon\le d_u(\xi, \eta)+d_u(\eta, \zeta)+\varepsilon.
    $$
    Since $\varepsilon>0$ is arbitrary, it follows that
    $$
    d_u(\xi, \zeta)\le d_u(\xi, \eta)+d_u(\eta, \zeta).
    $$
\end{proof}

In general, the topology induced by an orbit uniform metric differs from the original topology on $M$. Thus, an orbit uniform metric provides a genuinely new metric structure on $M$, rather than merely reformulating the original metric $d$.
Note that the action of $G$ on $(M, d_u)$ is isometric and $d\le d_u$.
\\
\medskip
\paragraph{\textbf{Notation}} Unless otherwise specified, all topological and metric notions refer to the original topology on $M$ and the compatible metric $d$, respectively. 

We now show that the topology induced by the orbit uniform metric is intrinsic to the convergence action.

\begin{proposition}
    The topology induced by $d_u$ is independent of the choice of the compatible metric $d$.
    Moreover, the associated uniform structure is also independent of this choice, i.e., if $d$ and $d^\prime$ are two compatible metrics on $M$, then $\mathrm{id}_M:(M,d_u)\to(M,d_u^\prime)$ is uniformly continuous.
\end{proposition}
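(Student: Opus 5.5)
By symmetry it suffices to prove one direction: for every $\varepsilon>0$ there is $\delta>0$ such that $d_u(\xi,\eta)<\delta$ implies $d'_u(\xi,\eta)\le\varepsilon$. Uniform continuity of $\mathrm{id}_M:(M,d_u)\to(M,d'_u)$ then holds in both directions, so the two uniform structures coincide. In particular the induced topologies coincide, which gives the first assertion.

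The key input is that $M$ is compact, so any two compatible metrics are uniformly equivalent. Concretely, $\mathrm{id}_M:(M,d)\to(M,d')$ is a continuous map from a compact metric space and is therefore uniformly continuous. Fix $\varepsilon>0$. I would choose $\delta>0$ such that $d(x,y)<\delta$ implies $d'(x,y)<\varepsilon$ for all $x,y\in M$.

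Now suppose $d_u(\xi,\eta)<\delta$. Then for every $g\in G$ we have $d(g\xi,g\eta)\le d_u(\xi,\eta)<\delta$, since $d_u$ is a supremum over $G$. By the choice of $\delta$, this gives $d'(g\xi,g\eta)<\varepsilon$ for every $g\in G$. Taking the supremum over $g$ yields $d'_u(\xi,\eta)\le\varepsilon$, which is exactly uniform continuity. The important point is that the same $\delta$ works for every $g$: the modulus of uniform continuity of $\mathrm{id}:(M,d)\to(M,d')$ does not depend on $g$, because it is applied to the pair of points $(g\xi,g\eta)$ rather than to $g$ itself. So the supremum passes through without any loss.

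I do not expect a real obstacle here. The only subtle point is to use uniform continuity on the whole of $M$, which comes from compactness, rather than mere continuity. Pointwise continuity would give a $\delta$ depending on the point $g\xi$, and that would fail to be uniform over the orbit.
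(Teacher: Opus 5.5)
Your argument is correct and is essentially the paper's proof: uniform continuity of $\mathrm{id}_M:(M,d)\to(M,d')$ from compactness, applied with a single $\delta$ to every pair $(g\xi,g\eta)$, then passing to the supremum over $G$ and interchanging $d$ and $d'$. Taking the supremum gives $d'_u(\xi,\eta)\le\varepsilon$, which you correctly state and which is all uniform continuity requires.
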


\begin{proof}
    Let $d$ and $d^\prime$ be two compatible metrics on $M$.
    Then the identity map $\mathrm{id}_M:(M,d)\to(M,d^\prime)$ is continuous.
    Since $M$ is compact, the map is uniformly continuous, i.e., for every $\varepsilon>0$ there exists $\delta>0$ such that $d(\xi,\eta)<\delta$ implies $d^\prime(\xi,\eta)<\varepsilon$.

    Indeed, if $d_u(\xi,\eta)<\delta$, then $d(g\xi,g\eta)<\delta$ for every $g\in G$. Hence $d^\prime(g\xi,g\eta)<\varepsilon$ for every $g\in G$, and therefore $d_u^\prime(\xi,\eta)<\varepsilon$.

    By interchanging the roles of $d$ and $d^\prime$, we conclude that the two metrics are uniformly equivalent.
\end{proof}

In the metric setting, we can state Theorem~\ref{topologicalVersion} as Corollary~\ref{conicalOrBoundedParabolic}.

\begin{theorem}[=Theorem~\ref{dGComplete}]
    Every orbit uniform metric is complete.
\end{theorem}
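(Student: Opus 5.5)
The plan is the standard argument that a supremum of a family of complete-compatible metrics is complete. It uses only three facts: $d\le d_u$, compactness of $(M,d)$, and the continuity of each homeomorphism $g\in G$. The convergence property of the action is not needed.

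Let $\{x_n\}$ be a $d_u$-Cauchy sequence in $M$. Since $d\le d_u$, the sequence is also $d$-Cauchy. The metric space $(M,d)$ is compact, hence complete, so $x_n\to x$ in the original topology for some $x\in M$. The candidate $d_u$-limit is this point $x$. It remains to upgrade $d$-convergence to $d_u$-convergence, which we do uniformly over the group.

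Fix $\varepsilon>0$ and choose $N$ such that $d_u(x_n,x_m)<\varepsilon$ for all $n,m\ge N$. Unwinding the definition of $d_u$ gives $d(gx_n,gx_m)<\varepsilon$ for every $g\in G$ and all $n,m\ge N$. Now fix $g\in G$ and $n\ge N$, and let $m\to\infty$. Because $g$ acts by a homeomorphism, $gx_m\to gx$ in $(M,d)$. Since $d(gx_n,\cdot)$ is continuous, we get $d(gx_n,gx)\le\varepsilon$. The index $N$ was chosen independently of $g$, so taking the supremum over $g\in G$ yields $d_u(x_n,x)\le\varepsilon$ for all $n\ge N$. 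Hence $x_n\to x$ in $d_u$, and $d_u$ is complete.

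The only step needing care is the order of quantifiers in the previous paragraph. The limit in $m$ must be taken for each fixed $g$ separately, and only afterwards the supremum over $g$. This works because the Cauchy estimate holds uniformly in $g$. No step is a genuine obstacle: the argument goes through for any group acting on a compact metric space by homeomorphisms.
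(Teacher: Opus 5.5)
Your proof is correct and is essentially the paper's argument: $d\le d_u$ plus compactness of $(M,d)$ gives a $d$-limit $x$. Letting $m\to\infty$ in the $g$-uniform Cauchy estimate $d(gx_n,gx_m)<\varepsilon$, for each fixed $g$, then yields $d_u(x_n,x)\le\varepsilon$ for all $n\ge N$. As you observe, the paper's proof also never uses the convergence property, only that $G$ acts by homeomorphisms.
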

\begin{proof}
    Let $\{\xi_n\}$ be a $d_u$-Cauchy sequence in $M$.
    Since $d_u\ge d$, it is also a Cauchy sequence with respect to $d$.
    Since $M$ is compact, the sequence converges to some point $\xi\in M$.
    For any $\varepsilon>0$, there exists $N>0$ such that $d_u(\xi_n, \xi_m)<\varepsilon$ for every $n, m>N$.
    Note that $\lim\limits_{m\to \infty}g\xi_m=g\xi$ for every $g\in G$.
    Hence
    $$d(g\xi_n, g\xi)=\lim_{m\to\infty}d(g\xi_n, g\xi_m)\le \varepsilon$$
    for every $g\in G$ and every $n>N$.
    Therefore, for every $\varepsilon>0$, there exists $N>0$ such that $d_u(\xi_n, \xi)\le \varepsilon$ for every $n>N$; that is, $\{\xi_n\}$ $d_u$-converges to $\xi$.
\end{proof}

As an immediate consequence of Corollary~\ref{conicalOrBoundedParabolic} and Theorem~\ref{dGComplete}, we obtain Theorem~\ref{geomInfiniteUncountable}.

\begin{corollary}[=Theorem~\ref{geomInfiniteUncountable}]
    The action of $G$ on $M$ is geometrically infinite if and only if the set of non-conical limit points is uncountable.
\end{corollary}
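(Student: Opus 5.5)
The plan is to treat the two implications separately, with Corollary~\ref{conicalOrBoundedParabolic} and Theorem~\ref{dGComplete} as the main inputs.

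\emph{If the non-conical limit points are uncountable, the action is geometrically infinite.} Suppose instead the action is geometrically finite. Then every non-conical limit point is bounded parabolic. There are only finitely many $G$-orbits of bounded parabolic points (a standard consequence of geometric finiteness, e.g.\ via Lemma~\ref{YamanGeomFinite}). Each orbit is countable when $G$ is countable. This is where countability of $G$ enters: a convergence group on a metrizable compactum with an uncountable limit set is countable, since the action is properly discontinuous on the $\sigma$-compact space $\Theta^3(M)$. Hence $\Lambda^{nc}G$ is countable.

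\emph{If the action is geometrically infinite, the non-conical limit points are uncountable.} Set $F=\Lambda^{ne}G$, the set of non-expansive limit points; it is nonempty by assumption. I will work in $(\Lambda G,d_u)$.

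First, $\Lambda G$ is $d$-closed in $M$, and $d\le d_u$, so $\Lambda G$ is $d_u$-closed. By Theorem~\ref{dGComplete}, $(\Lambda G,d_u)$ is therefore a complete metric space.

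Next, by Corollary~\ref{conicalOrBoundedParabolic}(2), a point of $\Lambda G$ is $d_u$-isolated exactly when it is expansive. So $F$ is the set of non-isolated points of $(\Lambda G,d_u)$.

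The key step is to show that $F$ is $d_u$-closed and has no $d_u$-isolated points. Closedness is immediate: the set of isolated points of any metric space is open, so its complement $F$ is closed. Now take $\xi\in F$. It is a $d_u$-limit of other points $\xi_n\in\Lambda G$. If infinitely many $\xi_n$ lie in $F$, we are done. Otherwise the $\xi_n$ are eventually expansive, hence $d_u$-isolated in $\Lambda G$, and I must find non-expansive points near $\xi$. This is the main obstacle: a complete space whose non-isolated points form a single point can still be countable, for example $\{0\}\cup\{1/n\}$.

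To get around it I will use the group action. The orbit $G\xi$ lies in $F$, since expansiveness is $G$-invariant. By Corollary~\ref{conicalOrBoundedParabolic}(3) the orbit $G\xi$ is not $d_u$-discrete. So some point $g\xi$ is a $d_u$-limit of distinct points $h_n\xi\in F$. Since $G$ acts by $d_u$-isometries, translating by $g^{-1}$ shows that $\xi$ itself is a $d_u$-limit of distinct points $g^{-1}h_n\xi$ of $F$. Thus $\xi$ is not isolated in $F$.

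Therefore $F$ is a nonempty perfect subset of a complete metric space, hence a nonempty perfect complete metric space in its own right. Such a space is uncountable, by the standard Baire category argument (a countable complete space would have an isolated point). Finally $F\subset\Lambda^{nc}G$, because conical points are expansive. Hence $\Lambda^{nc}G$ is uncountable.
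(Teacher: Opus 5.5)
Your proof is correct and follows the same route as the paper. Geometric finiteness makes every non-conical point bounded parabolic, hence the non-conical set is countable; in the geometrically infinite case, $\Lambda^{ne}G$ is a nonempty $d_u$-perfect set (by Corollary~\ref{conicalOrBoundedParabolic}(3) and the isometric action) inside the complete space $(M,d_u)$, so it is uncountable. You also supply two details the paper leaves implicit: the $d_u$-closedness of $\Lambda^{ne}G$, as the complement of the isolated points of the $d_u$-closed set $\Lambda G$, which is needed to transfer completeness from Theorem~\ref{dGComplete}; and the countability of $G$ together with finiteness of the number of orbits of bounded parabolic points, which underlies the paper's claim that bounded parabolic points are countable.
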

\begin{proof}
\textit{Necessity direction}. By definition, if the action is geometrically finite, then every non-conical point is a bounded parabolic point. It is clear that the set of bounded parabolic points is countable.
\\
\textit{Sufficiency direction}. Suppose that the action is geometrically infinite. By definition, the set $\Lambda^{ne}G$ is non-empty.
By Corollary~\ref{conicalOrBoundedParabolic}, every point $\xi$ in $\Lambda^{ne}G$ is not $d_u$-isolated in $G\xi\subset \Lambda^{ne}G$, i.e. the space $\Lambda^{ne}G$ is $d_u$-perfect.
By Theorem~\ref{dGComplete}, $(\Lambda^{ne}G, d_u)$ is a perfect complete metric space, and hence its cardinality is at least that of the continuum.
\end{proof}

\begin{remark}
    The spaces $M$ and $\Lambda^{ne}G$ are not $d_u$-compact. 
    
    Since $M$ contains infinitely many conical points, which are $d_u$-isolated, the $d_u$-topology of $M$ is not compact.
    
    Consider $\xi\in \Lambda^{ne}G$ and a sequence $\{g_n\}$ in $G$ such that $g_n\xi\to \eta\in \Lambda^{e}G$.
    Such a sequence exists since the orbit $G\xi$ is dense in $\Lambda G$.
    Then the sequence $\{g_n\xi\}$ has no $d_u$-convergent subsequence.
\end{remark}

\section{Escaping Hyperbolic Elements}\label{chapter:escaping}
In this section, we prove Theorem~\ref{geomInfiniteEscapingGeodesic}. The overall strategy follows that of \cite{YY26}. However, unlike the setting of Gromov hyperbolic spaces considered there, a convergence group action does not come equipped with an ambient geometry. Consequently, several geometric notions used in \cite{YY26} are no longer available and must be replaced by new concepts adapted to the present setting.

The main point is to prove the converse direction: if there is no escaping sequence of hyperbolic elements, then the action is geometrically finite. To this end, we first establish the implication under the following non-escaping assumption and then complete the proof of the theorem in Subsection~\ref{subsec:proofTheorem1}.

The following condition captures the absence of escaping hyperbolic elements in terms of the orbit uniform metric.
\begin{definition}
    Given a subset $A$ of $G$ and $\varepsilon>0$, we say that $A$ has the $\varepsilon$-\textit{NEHE property} (Non-Escaping Hyperbolic Elements) if $d_{u}(g^+, g^-)>\varepsilon$ for every hyperbolic element $g\in A$.
\end{definition}

\subsection{Translation lengths}
In this subsection, we introduce two notions of translation length for hyperbolic elements in the boundary setting and investigate their fundamental properties. These constructions constitute the central step of the proof: they replace the role of geometric translation length in the absence of an ambient hyperbolic space and allow us to recover the necessary quantitative estimates from the orbit uniform metric.

\begin{definition}
    Let $g\in G$ be a hyperbolic element. Its \textit{topological translation length} is defined by
    $$
    \tau(g):=\inf_{\xi\in M\setminus\{g^-, g^+\}} \sup_{n\in \mathbb{Z}} d(g^n\xi, g^{n+1}\xi).
    $$
    Its $d_u$-\textit{translation length} is defined by
    $$
    \tau_u(g):=\inf_{\xi\in M\setminus\{g^+, g^-\}} d_u(\xi, g\xi).
    $$

    Let $c$ be a hyperbolic conjugacy class in $G$ (i.e., consisting of hyperbolic elements). Its \textit{topological translation length} is defined by
    $$
    \tau(c):=\sup_{g\in c}\tau(g).
    $$
\end{definition}

The first notion reflects the dynamical behavior of a hyperbolic element with respect to the original topology, whereas the second measures its displacement in the orbit uniform metric. The comparison between these two quantities allows us to transfer dynamical information into estimates in the orbit uniform metric.

\begin{remark}
    The topological translation length is not invariant under conjugation, whereas the $d_u$-translation length is.
\end{remark}

\begin{lemma}\label{threeTranslationLengths}
    $\tau(g)\le \tau([g])\le \tau_u(g)$ for every hyperbolic element $g\in G$.
\end{lemma}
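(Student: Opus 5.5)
The first inequality $\tau(g)\le\tau([g])$ holds by definition: $\tau([g])=\sup_{h\in[g]}\tau(h)$ and $g\in[g]$. The content of the lemma is therefore the second inequality $\tau([g])\le\tau_u(g)$. Since $\tau_u$ is conjugation-invariant (by the preceding remark), this reduces to showing $\tau(h)\le\tau_u(h)$ for every $h=fgf^{-1}\in[g]$. Taking the supremum over $h$ and using $\tau_u(h)=\tau_u(g)$ then gives the claim. So it suffices to prove the pointwise inequality $\tau(g)\le\tau_u(g)$ for an arbitrary hyperbolic element $g$.

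To prove $\tau(g)\le\tau_u(g)$, fix $\varepsilon>0$ and choose $\xi\in M\setminus\{g^+,g^-\}$ with $d_u(\xi,g\xi)<\tau_u(g)+\varepsilon$. I would use this same $\xi$ as a test point in the infimum defining $\tau(g)$. For every $n\in\mathbb{Z}$, the definition of $d_u$ with the group element $g^n$ gives
$$
d(g^n\xi,g^{n+1}\xi)=d(g^n\xi,g^n(g\xi))\le \sup_{h\in G}d(h\xi,hg\xi)=d_u(\xi,g\xi).
$$
Taking the supremum over $n$ yields $\sup_{n}d(g^n\xi,g^{n+1}\xi)\le d_u(\xi,g\xi)<\tau_u(g)+\varepsilon$. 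Hence $\tau(g)<\tau_u(g)+\varepsilon$, and letting $\varepsilon\to0$ gives $\tau(g)\le\tau_u(g)$.

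Applied to each conjugate $h$, this gives $\tau(h)\le\tau_u(h)=\tau_u(g)$, and so $\tau([g])\le\tau_u(g)$. The only point needing care is the conjugation invariance of $\tau_u$, which the remark asserts. I would verify it directly: $d_u$ is $G$-invariant, and $f$ maps $M\setminus\{g^\pm\}$ bijectively onto $M\setminus\{(fgf^{-1})^\pm\}$ because $(fgf^{-1})^\pm=fg^\pm$. Then
$$
\tau_u(fgf^{-1})=\inf_{\eta\notin\{fg^\pm\}}d_u(\eta,fgf^{-1}\eta)=\inf_{\xi\notin\{g^\pm\}}d_u(f\xi,fg\xi)=\tau_u(g),
$$
using the substitution $\eta=f\xi$. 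I do not expect any genuine obstacle here; the argument is a direct unwinding of the definitions.
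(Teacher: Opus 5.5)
Your proof is correct and is essentially the paper's argument. The paper bounds $\tau(hgh^{-1})$ in one computation, using the substitution $\eta=h^{-1}\xi$ and the estimate $d(hg^n\eta,hg^{n+1}\eta)\le d_u(\eta,g\eta)$; you split the same computation into the pointwise bound $\tau\le\tau_u$ and the conjugation invariance of $\tau_u$.
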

\begin{proof}
    The inequality $\tau(g)\le \tau([g])$ follows directly from the definition.

    For every $h\in G$, since $(hgh^{-1})^\pm=hg^\pm$, it follows that
    \begin{align*}
        \tau(hgh^{-1})&=\inf_{\xi\in M\setminus\{hg^+, hg^-\}}\sup_{n\in \mathbb{Z}} d(hg^nh^{-1}\xi, hg^{n+1}h^{-1}\xi)\\
        &=\inf_{\eta=h^{-1}\xi\in M\setminus\{g^+, g^-\}}\sup_{n\in \mathbb{Z}} d(hg^{n}\eta, hg^{n+1}\eta)\\
        &\le \inf_{\eta\in M\setminus\{g^+, g^-\}}d_{u}(\eta, g\eta)\\
        &=\tau_u(g).
    \end{align*}

    Hence $\tau([g])\le \tau_u(g)$.
\end{proof}

Before studying small translation lengths, we first show that every hyperbolic element has positive topological translation length.

\begin{lemma}\label{translationLengthShort}
    Let $g\in G$ be a hyperbolic element. Then $\tau(g)>0$.
\end{lemma}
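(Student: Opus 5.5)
We need to show that $\inf_{\xi\in M\setminus\{g^\pm\}}\sup_{n\in\mathbb{Z}} d(g^n\xi,g^{n+1}\xi)>0$. The plan is to reduce the infimum over the noncompact set $M\setminus\{g^+,g^-\}$ to a compact fundamental domain for the cyclic action of $\langle g\rangle$, and then use continuity and the absence of fixed points there.

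\emph{Step 1: fundamental domain.} The group $\langle g\rangle$ acts properly discontinuously and cocompactly on $M\setminus\{g^+,g^-\}$, by the standard north--south dynamics of a hyperbolic element in a convergence action. Concretely, choose disjoint closed neighborhoods $U^+$ of $g^+$ and $U^-$ of $g^-$ such that $g(M\setminus \operatorname{int}U^-)\subset \operatorname{int}U^+$. This is possible since $g^n\to g^+$ locally uniformly off $g^-$; after replacing $g$ by a power if necessary, and noting that $\tau(g^m)\le m\,\tau(g)$ by the triangle inequality, it suffices to bound $\tau(g^m)$ below. Set
\[
D=\overline{M\setminus (U^+\cup U^-)}\cup\big(U^+\setminus g\operatorname{int}U^+\big).
\]
We check that $D$ is compact, misses $g^\pm$, and satisfies $\bigcup_n g^nD=M\setminus\{g^+,g^-\}$. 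For the last property, any $\xi\ne g^\pm$ has forward iterates entering $U^+$ and backward iterates entering $U^-$, so some iterate lies in $D$.

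\emph{Step 2: pointwise positivity.} For $\xi\in M\setminus\{g^+,g^-\}$, the quantity $\sup_n d(g^n\xi,g^{n+1}\xi)$ depends only on the $\langle g\rangle$-orbit of $\xi$. Hence the infimum over all of $M\setminus\{g^+,g^-\}$ equals the infimum over $D$. Moreover,
\[
\sup_n d(g^n\xi,g^{n+1}\xi)\ge d(\xi,g\xi)=:f(\xi),
\]
and $f$ is continuous on $M$. It vanishes only at fixed points of $g$, which are exactly $g^+$ and $g^-$, and these lie outside $D$.

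\emph{Step 3: conclude.} Since $f$ is a continuous positive function on the compact set $D$, it attains a positive minimum $c$. Therefore $\tau(g)\ge c>0$. (If we passed to a power $g^m$ in Step 1, we obtain $\tau(g)\ge \tau(g^m)/m>0$.)

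\emph{Main obstacle.} The only delicate point is Step 1, namely producing the compact fundamental domain $D$ rigorously from the convergence property. The key ingredient is that $\{g^n\}_{n\ge 1}$ is a convergence sequence with attracting point $g^+$ and repelling point $g^-$, and symmetrically for $g^{-n}$. Local uniformity then gives the inclusion $g^N(M\setminus\operatorname{int}U^-)\subset\operatorname{int}U^+$ for large $N$. A simpler alternative avoids fundamental domains altogether: argue by contradiction with a sequence $\xi_k$ satisfying $\sup_n d(g^n\xi_k,g^{n+1}\xi_k)\to0$. Translate each $\xi_k$ by a power of $g$ so that it lies in the fixed compact set $M\setminus(\operatorname{int}U^+\cup\operatorname{int}U^-)$. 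A limit point $\xi$ then satisfies $d(\xi,g\xi)=0$, so $\xi$ is a fixed point of $g$, yet $\xi\notin\{g^\pm\}$, which is a contradiction.
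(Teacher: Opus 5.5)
Your main argument (Steps 1--3) is correct and is essentially the paper's proof. The paper simply invokes cocompactness of $\langle g\rangle$ on $M\setminus\{g^+,g^-\}$ to get a compact $K$ with $\langle g\rangle K=M\setminus\{g^+,g^-\}$ and takes the positive minimum of $\xi\mapsto d(\xi,g\xi)$ on $K$; you do the same but build such a set $D$ by hand. Passing to a power $g^m$ is harmless but unnecessary: $D$ then already meets every $\langle g\rangle$-orbit, so you could minimize $d(\xi,g\xi)$ on it directly.

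Your closing ``simpler alternative'' does not work as written. Under your normalization $g(M\setminus\operatorname{int}U^-)\subset\operatorname{int}U^+$, the whole orbit of any point of $\operatorname{int}U^-\cap g^{-1}(\operatorname{int}U^+)$ stays in $\operatorname{int}U^-\cup\operatorname{int}U^+$ and never meets $M\setminus(\operatorname{int}U^+\cup\operatorname{int}U^-)$. So that argument still needs the piece $U^+\setminus g\operatorname{int}U^+$ of $D$, i.e.\ a genuine fundamental domain.
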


\begin{proof}
    Note that the action of $\langle g\rangle$ on $M\setminus\{g^-, g^+\}$ is cocompact. Hence we may choose a compact subset $K\subset M\setminus\{g^-, g^+\}$ such that $\langle g\rangle K=M\setminus\{g^-, g^+\}$.

    Consider the continuous function $\varphi: K\to \mathbb{R}_+$ defined by $\xi\mapsto d(\xi, g\xi)$.
    Since $K$ is compact and $\langle g\rangle K=M\setminus\{g^-, g^+\}$, we obtain $\tau(g)\ge \min\varphi>0$.
\end{proof}

The next lemma provides the key finiteness property needed for the proof. It shows that, under the NEHE assumption, hyperbolic elements with small translation lengths can occur in only finitely many conjugacy classes.

\begin{lemma}
    Suppose that $A$ is a subset of $G$ with the $\varepsilon$-NEHE property for some $\varepsilon>0$. Then there exists a constant $r=r(\varepsilon)>0$ such that there are only finitely many hyperbolic conjugacy classes $c$ with $c\cap A\ne \varnothing$ and $\tau(c)\le r$.
\end{lemma}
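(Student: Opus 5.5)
The plan is to move each relevant conjugacy class to a representative whose fixed points are uniformly far apart in the original metric $d$. Then I would show, using the convergence property, that only finitely many hyperbolic elements can have both well-separated fixed points and small topological translation length. I will take $r=\varepsilon/4$.

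\emph{Step 1 (normalization).} Let $c$ be a hyperbolic conjugacy class with $c\cap A\neq\varnothing$ and $\tau(c)\le r$, and pick $g\in c\cap A$. By the $\varepsilon$-NEHE property, $d_u(g^+,g^-)>\varepsilon$, so there is $h\in G$ with $d(hg^+,hg^-)>\varepsilon$. Set $g_c=hgh^{-1}\in c$. Then $g_c^\pm=hg^\pm$, so $d(g_c^+,g_c^-)>\varepsilon$. Moreover $\tau(g_c)\le\tau(c)\le r$ by definition of $\tau(c)$. Distinct classes give distinct elements $g_c$. It therefore suffices to show that there are only finitely many hyperbolic $f\in G$ with $d(f^+,f^-)>\varepsilon$ and $\tau(f)\le r$.

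\emph{Step 2 (a point moved little but far from the fixed points).} For such an $f$, since $\tau(f)\le r<\varepsilon/3$, there is $\xi\notin\{f^\pm\}$ with $d(f^k\xi,f^{k+1}\xi)\le r'$ for all $k\in\mathbb{Z}$, for some $r'$ slightly larger than $r$ and still below $\varepsilon/3$. The orbit $f^k\xi$ tends to $f^-$ as $k\to-\infty$ and to $f^+$ as $k\to+\infty$, moving in steps of length at most $r'$. Because $d(f^+,f^-)>\varepsilon$ and $r'<\varepsilon/3$, the orbit cannot jump from the $\varepsilon/3$-ball around $f^-$ to the $\varepsilon/3$-ball around $f^+$. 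Concretely, take the last index $k$ with $d(f^k\xi,f^-)<\varepsilon/3$; then $f^{k+1}\xi$ satisfies $d(f^{k+1}\xi,f^-)\ge\varepsilon/3$ and, by the triangle inequality, $d(f^{k+1}\xi,f^+)>\varepsilon-\varepsilon/3-r'\ge\varepsilon/3$. This yields $\eta_f$ with $d(\eta_f,f^\pm)\ge\varepsilon/3$ and $d(\eta_f,f\eta_f)\le r'$.

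\emph{Step 3 (contradiction via convergence).} Suppose there are infinitely many distinct such $f_n$. Pass to a convergence subsequence with attracting point $b$ and repelling point $a$, and further arrange that $f_n^+\to p$, $f_n^-\to q$ and $\eta_{f_n}\to\eta$. Then $d(p,q)\ge\varepsilon$ and $d(\eta,p),d(\eta,q)\ge\varepsilon/3$.
\begin{itemize}
\item If $a=b$: the argument of Case~2 in Lemma~\ref{hyperbolicElementFixPointConvergence} applies. If $p\ne a$, then $f_nf_n^+\to a$, whereas $f_nf_n^+=f_n^+\to p$, so $p=a$. Applying the same argument to $f_n^{-1}$ gives $q=a$. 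This contradicts $d(p,q)\ge\varepsilon$.
\item If $a\ne b$: the argument of Case~1 there shows $f_n^+\to b$, and, applied to $f_n^{-1}$, that $f_n^-\to a$. Hence $p=b$ and $q=a$. Now $\eta$ stays a definite distance from $a$, so local uniform convergence on $M\setminus\{a\}$ gives $f_n\eta_{f_n}\to b$. On the other hand $d(f_n\eta_{f_n},\eta_{f_n})\le r'$, so in the limit $d(\eta,b)\le r'<\varepsilon/3$. This contradicts $d(\eta,p)\ge\varepsilon/3$.
\end{itemize}

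The main obstacle I expect is Step 2. The infimum in $\tau$ only guarantees small steps along \emph{some} orbit, and that orbit must be exploited to produce a point uniformly away from both fixed points. This is where the constraint $r<\varepsilon/3$ is essential. Everything else reduces to the two cases of Lemma~\ref{hyperbolicElementFixPointConvergence}.
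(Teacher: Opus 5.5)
Your proof is correct. Step~1 is exactly the paper's normalization: conjugate so that $d(g^+,g^-)\ge\varepsilon$, and inject the classes into the set of such elements with $\tau\le r$. Step~2 matches the paper's choice of an orbit point of $g$ lying away from both fixed points.

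The routes diverge in how finiteness is extracted. The paper uses the formulation of the convergence property as properness of the action on $\Theta^3(M)$. Because the steps are small, it picks $n$ with both $g^n\xi$ and $g^{n+1}\xi$ at distance $\ge\varepsilon/4$ from $g^\pm$. Then the triple $(g^-,g^n\xi,g^+)$ and its $g$-image lie in one fixed compact set $X\subset\Theta^3(M)$, and finiteness is immediate. You instead use the sequential definition: you argue by contradiction with a convergence subsequence and track the limits of the fixed points and of your slowly moved point $\eta_f$. This needs only one well-placed point, but requires extra dynamical input about where fixed points accumulate.

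That input is lighter than you make it. You never need Case~1 of Lemma~\ref{hyperbolicElementFixPointConvergence}. The trick you use when $a=b$ works in general: if $p\neq a$, then $f_n^+=f_nf_n^+\to b$. Hence $p,q\in\{a,b\}$, and $p\neq q$ forces $\{p,q\}=\{a,b\}$. This rules out $a=b$ and gives $d(\eta,b)\ge\varepsilon/3$ directly, contradicting $d(\eta,b)\le r'$.

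Alternatively, Step~2 already gives $d(f\eta_f,f^\pm)\ge\varepsilon/3-r'>0$. So you could finish with the paper's one-line properness argument, using the compact set of triples with $d(x,z)\ge\varepsilon$ and $d(x,y),d(y,z)\ge\varepsilon/3-r'$.

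In either version, make explicit that $r'$ is a single constant with $r<r'<\varepsilon/3$, chosen before $f$ (e.g.\ $r'=7\varepsilon/24$). The limit in Step~3, or the choice of a fixed compact set, needs a bound independent of $f$ and strictly below $\varepsilon/3$.
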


\begin{proof}
    Set $r=\frac{1}{20}\varepsilon$.
    We claim that $r$ satisfies the required property.

    Let $c$ be a hyperbolic conjugacy class intersecting $A$.
    By definition, we have $\tau(g)\le r$ for every $g\in c\cap A$.
    Since $(hgh^{-1})^\pm=hg^\pm$, the NEHE property implies that there exists $g\in G$ such that $\tau(g)\le r$ and $d(g^+, g^-)\ge \varepsilon$.

    Define
    \begin{align}
        B&:=\{g\in G: \tau(g)\le r\text{ and }d(g^+, g^-)\ge \varepsilon\},\notag\\
        C&:=\{\text{hyperbolic conjugacy classes } c: \tau(c)\le r\text{ and }c\cap A\ne \varnothing\}.\notag
    \end{align}
    The discussion above gives an injective map $C\to B$.

    Let $g\in B$. Choose $\xi\in M\setminus\{g^-, g^+\}$ such that
    $$
    \sup_{n\in \mathbb{Z}}d(g^n\xi, g^{n+1}\xi)\le \inf_{\eta\in M\setminus\{g^-, g^+\}} \sup_{n\in \mathbb{Z}}d(g^n\eta, g^{n+1}\eta)+r\le 2r.
    $$

    Since $\lim\limits_{n\to\infty}g^n\xi=g^+$, $\lim\limits_{n\to -\infty} g^n\xi=g^-$, and $d(g^+, g^-)\ge \varepsilon$, there exists $n\in \mathbb{Z}$ such that $d(g^n\xi, g^-)\ge \frac{1}{4}\varepsilon$, $d(g^n\xi, g^+)\ge \frac{1}{4}\varepsilon$, $d(g^{n+1}\xi, g^-)\ge \frac{1}{4}\varepsilon$, and $d(g^{n+1}\xi, g^+)\ge \frac{1}{4}\varepsilon$.

    Set
    $$
    X=\{(x,y,z)\in M^3:
    d(x,z)\ge \varepsilon,\ 
    d(x,y)\ge \tfrac{1}{4}\varepsilon,\ 
    d(y,z)\ge \tfrac{1}{4}\varepsilon\}.
    $$
    Then $X$ is a compact subset of $\Theta^3(M)$.

    The discussion above shows that the triples $(g^-,g^n \xi,g^+)$ and $g(g^-,g^n \xi,g^+)=(g^-,g^{n+1}\xi,g^+)$ lie in $X$ for every $g\in B$. Hence $gX\cap X\neq\varnothing$ for every $g\in B$.

    Since the action of $G$ on $\Theta^3(M)$ is proper, the set $B$ is finite. Consequently, the set $C$ is finite as well.
\end{proof}

\begin{corollary}\label{translationLengthLowerBound}
    Suppose that $A$ is a subset of $G$ with the $\varepsilon$-NEHE property for some $\varepsilon>0$.
    Then there exists a constant $r^\prime>0$ such that $\tau(c)>r^\prime$ for every hyperbolic conjugacy class $c$ intersecting $A$.

    In particular, $\tau_u(g)>r^\prime$ for every hyperbolic element $g\in A$.
\end{corollary}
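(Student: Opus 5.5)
The plan is to combine the preceding finiteness lemma with the positivity from Lemma~\ref{translationLengthShort}, and then pass to $\tau_u$ using Lemma~\ref{threeTranslationLengths}.

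Let $r=r(\varepsilon)>0$ be the constant from the preceding lemma. Let $C$ denote the set of hyperbolic conjugacy classes $c$ with $c\cap A\ne\varnothing$ and $\tau(c)\le r$. By that lemma, $C$ is finite. For each $c\in C$, pick a representative $g_c\in c$. Lemma~\ref{translationLengthShort} gives $\tau(g_c)>0$, and since $\tau(c)=\sup_{g\in c}\tau(g)\ge\tau(g_c)$, we get $\tau(c)>0$. Because $C$ is finite, the quantity
$$r_0:=\min_{c\in C}\tau(c)$$
is strictly positive. If $C$ is empty, set $r_0:=r$.

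Now choose $r^\prime:=\tfrac12\min\{r,r_0\}>0$. Let $c$ be any hyperbolic conjugacy class meeting $A$. If $c\notin C$, then $\tau(c)>r>r^\prime$. If $c\in C$, then $\tau(c)\ge r_0>r^\prime$. In either case $\tau(c)>r^\prime$, which proves the first assertion. The factor $\tfrac12$ is there only so that the inequality is strict.

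For the second assertion, take a hyperbolic element $g\in A$. Its class $[g]$ meets $A$, so the first part gives $\tau([g])>r^\prime$. Lemma~\ref{threeTranslationLengths} gives $\tau([g])\le\tau_u(g)$, and therefore $\tau_u(g)>r^\prime$.

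There is no genuine obstacle here; the substantive work is all in the preceding lemma. The one point to check is that the positivity of $\tau$ for a single element transfers to its conjugacy class. This holds because $\tau(c)$ is a supremum over $c$, so it is bounded below by $\tau$ of any one member, and that member has positive $\tau$.
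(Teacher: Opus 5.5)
Your proof is correct and is the argument the paper leaves implicit, since it states the corollary without proof. The preceding lemma gives finiteness of the classes with $\tau(c)\le r$, Lemma~\ref{translationLengthShort} together with $\tau(c)\ge\tau(g_c)$ gives a positive minimum over those finitely many classes, and Lemma~\ref{threeTranslationLengths} transfers the bound to $\tau_u$.
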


\begin{lemma}\label{lineNearOrbit}
    Suppose that $G$ has the $\varepsilon$-NEHE property for some $\varepsilon>0$.
    Then for every non-conical point $\xi$, the orbit $G\xi$ is $d_u$-discrete, i.e. there exists a constant $r_\xi>0$, depending on $\xi$, such that
    $$d_u(\xi, g\xi)\ge r_\xi$$
    for every $g\in G$ with $g\xi\ne \xi$.
\end{lemma}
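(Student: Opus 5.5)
We argue by contradiction. Suppose $\xi$ is non-conical and $G\xi$ is not $d_u$-discrete at $\xi$. Then there is a sequence $\{g_n\}$ in $G$ with $g_n\xi\ne\xi$ and $d_u(\xi,g_n\xi)\to 0$. Since $d\le d_u$, we also have $g_n\xi\to\xi$ in $M$, so Lemma~\ref{producingShortHyperbolicElement} applies. After passing to a subsequence (still denoted $\{g_n\}$), for each fixed $i$ the element $g_jg_i$ is hyperbolic for all sufficiently large $j$. The plan is to build from this a hyperbolic element of $G$ whose $d_u$-translation length is arbitrarily small. This contradicts Corollary~\ref{translationLengthLowerBound}, applied with $A=G$, which gives $\tau_u(h)>r'$ for every hyperbolic $h\in G$.

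The natural candidate is $h_{ij}=g_jg_i$ with $i<j$ large. The key estimate, using that $G$ acts on $(M,d_u)$ by isometries, is
$$
d_u(\xi,g_jg_i\xi)\le d_u(\xi,g_j\xi)+d_u(g_j\xi,g_jg_i\xi)=d_u(\xi,g_j\xi)+d_u(\xi,g_i\xi),
$$
which is as small as we like once $i,j$ are large. Since $\tau_u(h)=\inf_{\eta\notin\{h^\pm\}}d_u(\eta,h\eta)$, it remains to check that $\xi\notin\{h_{ij}^+,h_{ij}^-\}$. This is immediate: fixed points of hyperbolic elements are conical, and $\xi$ is non-conical. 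Hence $\tau_u(g_jg_i)\le d_u(\xi,g_i\xi)+d_u(\xi,g_j\xi)$. We first choose $i$ with $d_u(\xi,g_i\xi)<r'/2$, and then $j$ large enough that $g_jg_i$ is hyperbolic and $d_u(\xi,g_j\xi)<r'/2$. This gives $\tau_u(g_jg_i)<r'$, the desired contradiction. The constant $r_\xi$ then exists by negating the assumption; it depends on $\xi$ only through this contradiction argument, and in fact any $r_\xi<r'/2$ works.

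The main obstacle I expect is ensuring the hypotheses of Corollary~\ref{translationLengthLowerBound} are met exactly as stated. Its proof runs through the finiteness lemma for conjugacy classes with $\tau(c)\le r$ together with Lemma~\ref{threeTranslationLengths}, so we should confirm that the uniform bound $r'$ genuinely covers all hyperbolic elements of $G$ simultaneously. It does: under the $\varepsilon$-NEHE property only finitely many classes have $\tau(c)\le r$, and each has positive $\tau$ by Lemma~\ref{translationLengthShort}. Beyond this, the only delicate point is the order of quantifiers in Lemma~\ref{producingShortHyperbolicElement}: hyperbolicity of $g_jg_i$ holds only for $j$ large depending on $i$. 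That is why $i$ is fixed first and $j$ chosen afterwards, and the estimate above tolerates this order.
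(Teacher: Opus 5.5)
Your proof is correct and follows the paper's argument essentially step for step. It argues by contradiction, uses Lemma~\ref{producingShortHyperbolicElement} to make $g_jg_i$ hyperbolic with $i$ fixed before $j$, applies the isometric triangle inequality, and contradicts the uniform bound $\tau_u>r'$ from Corollary~\ref{translationLengthLowerBound}. Your explicit check that $\xi\notin\{(g_jg_i)^{+},(g_jg_i)^{-}\}$, because $\xi$ is non-conical, supplies a step the paper leaves implicit.

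The only thing to delete is the closing claim that any $r_\xi<r'/2$ works. Your argument rules out a \emph{sequence} with $g_n\xi\neq\xi$, $g_n\xi\to\xi$ in $M$ and $d_u(\xi,g_n\xi)<r'/2$, which is what Lemma~\ref{producingShortHyperbolicElement} needs. It does not rule out a single $g$ with $0<d_u(\xi,g\xi)<r'/2$. So that uniform bound does not follow from what you wrote, and the lemma does not need it.
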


\begin{proof}
    By Corollary~\ref{translationLengthLowerBound}, there exists a constant $r^\prime>0$ such that $\tau_u(g)>r^\prime$ for every hyperbolic element $g\in G$.
    By the definition of the $d_u$-translation length, we have $d_u(g\xi, \xi)>r^\prime$ for every hyperbolic element $g\in G$.

    Suppose, on the contrary, that for every $n\in \mathbb{N}$ there exists an element $g_n\in G$ such that $g_n\xi\ne \xi$ and $d_u(\xi, g_n\xi)\to 0$.
    In particular, $\lim\limits_{n\to\infty}g_n\xi=\xi$.

    Lemma~\ref{producingShortHyperbolicElement} gives a subsequence $\{g_{n_i}\}$ of $\{g_n\}$.
    Choose sufficiently large $i\in \mathbb{N}$ such that $d_u(\xi, g_{n_i}\xi)<\frac{1}{2}r^\prime$, and then choose sufficiently large $j\in \mathbb{N}$ such that $d_u(\xi, g_{n_j}\xi)<\frac{1}{2}r^\prime$ and the product $g_{n_j}g_{n_i}$ is hyperbolic.
    By Corollary~\ref{translationLengthLowerBound}, we have
    $$
    d_u(\xi, g_{n_j}g_{n_i}\xi)\ge r^\prime.
    $$
    However,
    \begin{align}
        d_u(\xi, g_{n_j}g_{n_i}\xi)&\le d_u(\xi, g_{n_j}\xi)+d_u(g_{n_j}\xi, g_{n_j}g_{n_i}\xi)\notag\\
        &=d_u(\xi, g_{n_j}\xi)+d_u(\xi, g_{n_i}\xi)\notag\\
        &< \frac{1}{2}r^\prime+\frac{1}{2}r^\prime\notag\\
        &=r^\prime,\notag
    \end{align}
    which is a contradiction.
\end{proof}

\subsection{Proof of Theorem~\ref{geomInfiniteEscapingGeodesic}}\label{subsec:proofTheorem1}
With the preceding estimates established, we now prove Theorem~\ref{geomInfiniteEscapingGeodesic}.
\\
    \paragraph{\textit{Necessity direction}.}
By Lemma~\ref{2cocompact}, the action of $G$ on $M$ is $2$-cocompact. Let $K$ be a compact subset of $\Theta^2(M)$ such that $GK=\Theta^2(M)$. Consider the function $\delta: K\to \mathbb{R}_+$, $\{x, y\}\mapsto d(x, y)$. This continuous function has a positive minimum $\varepsilon>0$ on the compact set $K$. It implies the $\varepsilon$-NEHE property for $G$.
\\
    \paragraph{\textit{Sufficiency direction}.}
Suppose that $G$ has the $\varepsilon$-NEHE property for some $\varepsilon>0$. Let $\xi\in \Lambda^{nc}G$ be a non-conical point.
By Proposition~\ref{lineNearOrbit}, the orbit $G\xi$ is $d_u$-discrete.
By Corollary~\ref{conicalOrBoundedParabolic}, the point $\xi$ is a bounded parabolic point.
Hence, $\Lambda G$ consists only of conical points and bounded parabolic points, i.e. the action is geometrically finite.

\section{Further Properties of Orbit Uniform Metrics}\label{chapter:global}

Corollary~\ref{conicalOrBoundedParabolic} characterizes the local behavior of the $d_u$-topology through expansive and non-expansive points, providing a foundation for the study of its global structure. We now apply this characterization to establish several global properties of the $d_u$-topology. In particular, we show that expansive points are $d_u$-dense in $\Lambda G$ by analyzing $d_u$-convergent sequences.

We first relate $d_u$-convergence to the convergence dynamics of the action. The following proposition shows that a non-trivial $d_u$-convergent sequence determines its attracting and repelling points uniquely and imposes strong restrictions on the behavior of all other points.

\begin{proposition}\label{generalDuConvergence}
    Suppose that a non-constant sequence $\{g_n\}$ in $G$ and points $\xi, \eta\in \Lambda^{ne}G$ satisfy $g_n\xi \duto \eta$. Then
    \begin{itemize}
        \item[(1)] $g_n^{-1}\eta\duto \xi$.
        \item[(2)] The attracting and repelling points of $\{g_n\}$ are $\eta$ and $\xi$, respectively.
        \item[(3)] For every point $\zeta\in M\setminus\{\xi\}$, the sequence $\{g_n\zeta\}$ $d_u$-diverges.
        \item[(4)] For every subsequence $\{g_{n_i}\}$ of $\{g_n\}$, the sequence $\{g_{n_i}^{-1}g_{n_{i+1}}\xi\}$ $d_u$-converges to $\xi$.
    \end{itemize}
\end{proposition}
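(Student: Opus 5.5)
Parts (1), (3) and (4) follow from the fact that $G$ acts on $(M,d_u)$ by isometries, together with the triangle inequality. Part (2) is where the convergence dynamics and the hypothesis $\xi,\eta\in\Lambda^{ne}G$ enter. Throughout, I read ``non-constant'' as meaning that $\{g_n\}$ contains no subsequence taking only finitely many values. If a single element $g$ occurred infinitely often, then $g\xi=\eta$ and none of the dynamical conclusions would make sense. So I will assume, after the usual reduction, that every subsequence has a further subsequence of distinct elements, and that convergence subsequences therefore exist.

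For (1), I use $G$-invariance of $d_u$:
$$d_u(g_n^{-1}\eta,\xi)=d_u(\eta,g_n\xi)\to 0.$$
For (4), the same invariance gives
$$d_u(g_{n_i}^{-1}g_{n_{i+1}}\xi,\xi)=d_u(g_{n_{i+1}}\xi,g_{n_i}\xi)\le d_u(g_{n_{i+1}}\xi,\eta)+d_u(\eta,g_{n_i}\xi),$$
and both terms on the right tend to $0$.

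For (2), I take an arbitrary convergence subsequence $\{g_{n_i}\}$ with attracting point $a$ and repelling point $b$. It suffices to show $a=\eta$ and $b=\xi$: then every convergence subsequence has the same pair of points, so the whole sequence is a convergence sequence with attracting point $\eta$ and repelling point $\xi$. Since $d\le d_u$, we have $g_{n_i}\xi\to\eta$ in the original topology.
\begin{itemize}
\item If $\xi\ne b$, locally uniform convergence gives $g_{n_i}\xi\to a$, so $a=\eta$.
\item If $\xi=b$ and $\eta\ne a$, then $\xi$ is the repelling point of a convergence sequence whose images of $\xi$ converge to a point different from the attracting point. This makes $\xi$ conical, contradicting $\xi\in\Lambda^{ne}G$.
\end{itemize}
Hence $a=\eta$ in all cases. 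For $b$, I apply the same argument to $\{g_{n_i}^{-1}\}$. This sequence has attracting point $b$ and repelling point $a$, and by (1) it satisfies $g_{n_i}^{-1}\eta\to\xi$. Using now that $\eta$ is non-conical, the same case analysis gives $b=\xi$.

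For (3), fix $\zeta\ne\xi$ and suppose some subsequence satisfies $g_{n_i}\zeta\duto\omega$. By (2) and $d\le d_u$, we have $g_{n_i}\zeta\to\eta$ in $M$, so $\omega=\eta$. Then
$$d_u(\zeta,\xi)=d_u(g_{n_i}\zeta,g_{n_i}\xi)\le d_u(g_{n_i}\zeta,\eta)+d_u(\eta,g_{n_i}\xi)\to0,$$
which contradicts $\zeta\ne\xi$. So no subsequence of $\{g_n\zeta\}$ $d_u$-converges.

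The main obstacle is (2): specifically, justifying the reduction to distinct elements from ``non-constant'', and applying the non-conicality of $\xi$ and of $\eta$ on the correct sides. The remaining parts are formal consequences of isometry.
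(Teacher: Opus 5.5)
Your proof is correct and follows the paper's for (1), (2) and (4). In (2) you use the same case split on an arbitrary convergence subsequence: non-conicality of $\xi$ identifies the attracting point, and then (1) applied to the inverse sequence, together with non-conicality of $\eta$, identifies the repelling point.

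The one genuine difference is (3). The paper argues that the limit must be $\eta$ and then re-applies (2) to the pair $(\zeta,\eta)$. As written, that step presupposes $\zeta\in\Lambda^{ne}G$, which is not given; only the inverse-sequence half of (2), which uses that $\eta$ is non-conical, is actually needed. Your isometry estimate $d_u(\zeta,\xi)=d_u(g_{n_i}\zeta,g_{n_i}\xi)\le d_u(g_{n_i}\zeta,\eta)+d_u(\eta,g_{n_i}\xi)\to 0$ needs no hypothesis on $\zeta$ at all. It also gives the slightly stronger conclusion that no subsequence of $\{g_n\zeta\}$ $d_u$-converges.

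Your reading of ``non-constant'' as ``no element occurs infinitely often'' is the right one. The paper's passage from convergence subsequences to ``$g_nx\to\eta$ locally uniformly'' tacitly requires it. Under the literal reading, (2) fails: take a sequence that is eventually a fixed $g$ with $g\xi=\eta$.
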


\begin{proof}
We prove the four statements in order.
\begin{itemize}
    \item[(1)] Since the action of $G$ is isometric with respect to $d_u$, we have
    $$d_{u}(\xi, g_n^{-1}\eta)=d_u(g_n\xi, \eta)\to 0.$$

    \item[(2)] We show that every convergence subsequence has the same attracting and repelling points.
    Let $\{g_{n_i}\}$ be a convergence subsequence with attracting point $b\in M$ and repelling point $a\in M$.
    
    If $a\ne \xi$, then $\xi\in M\setminus\{a\}$ and hence $g_{n_i}\xi\to b$. Thus, $\eta=\lim\limits_{i\to\infty}g_{n_i}\xi=b$.
    
    If $a=\xi$, then, since $\xi$ is not conical, we must also have $b=\lim\limits_{i\to\infty}g_{n_i}\xi=\eta$.
    
    Thus, in either case, $b=\eta$.

    Applying the same argument to the inverse sequence $\{g_{n_i}^{-1}\}$, whose attracting and repelling points are $a$ and $b$, respectively, we obtain $a=\xi$.
    Since the convergence subsequence was chosen arbitrarily, every convergence subsequence of $\{g_n\}$ has attracting point $\eta$ and repelling point $\xi$. Consequently, $g_nx\to \eta$ locally uniformly for $x\in M\setminus\{\xi\}$.

    \item[(3)] Suppose that the sequence $\{g_n\zeta\}$ $d_u$-converges. By (2), its limit can only be $\eta$.
    Applying (2) to the pair $\zeta,\eta$, we see that $\zeta$ must be the repelling point of the convergence sequence $\{g_n\}$. Since this repelling point is $\xi$, it follows that $\zeta=\xi$.
    Hence $\{g_n\zeta\}$ $d_u$-diverges for every $\zeta\in M\setminus\{\xi\}$.

    \item[(4)] We have
    $$
    d_{u}(g_{n_i}^{-1}g_{n_{i+1}}\xi, \xi)=d_{u}(g_{n_{i+1}}\xi, g_{n_i}\xi)\to 0.
    $$
    Therefore, $g_{n_i}^{-1}g_{n_{i+1}}\xi\duto \xi$.
\end{itemize}
\end{proof}

Proposition~\ref{generalDuConvergence}(4) shows that every $d_u$-convergent sequence gives rise to a sequence of the form $g_n\xi\duto\xi$ for some $\xi\in M$. By Corollary~\ref{conicalOrBoundedParabolic}, such sequences exist if and only if $\xi$ is a non-expansive point. The following proposition shows that, when $\xi$ is non-expansive, such a sequence can be chosen to consist entirely of hyperbolic elements.

\begin{lemma}\label{hyperbolicElementsGconverge}
    Suppose that $\xi\in \Lambda G$ is a non-expansive point. Then there exists a sequence $\{g_n\}$ of hyperbolic elements in $G$ such that $g_n\xi\duto \xi$.
\end{lemma}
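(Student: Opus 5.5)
Since $\xi$ is non-expansive, Corollary~\ref{conicalOrBoundedParabolic} says the orbit $G\xi$ is not $d_u$-discrete and $\xi$ is not $d_u$-isolated. First I would upgrade this to a sequence $\{f_n\}$ in $G$ with $f_n\xi\ne\xi$ and $d_u(f_n\xi,\xi)\to 0$. If $G\xi$ fails to be $d_u$-discrete at some orbit point $h\xi$, then there are $f'_n$ with $f'_n\xi\ne h\xi$ and $f'_n\xi\duto h\xi$. Because $G$ acts by $d_u$-isometries, setting $f_n=h^{-1}f'_n$ gives $f_n\xi\duto\xi$ with $f_n\xi\ne\xi$. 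Since $d\le d_u$, we also get $f_n\xi\to\xi$ in the original topology. Finally, $\xi$ is non-conical because it is non-expansive.

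Next I would apply Lemma~\ref{producingShortHyperbolicElement} to $\{f_n\}$. After passing to a subsequence $\{f_{n_i}\}$, it says that for each fixed $i$ the product $f_{n_j}f_{n_i}$ is hyperbolic for all sufficiently large $j$. For each $i$ I pick $j(i)>i$ large enough that $f_{n_{j(i)}}f_{n_i}$ is hyperbolic and $d_u(f_{n_{j(i)}}\xi,\xi)<1/i$. Then I set $g_i=f_{n_{j(i)}}f_{n_i}$.

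The estimate is the same triangle-inequality computation as in the proof of Lemma~\ref{lineNearOrbit}:
\[
d_u(g_i\xi,\xi)\le d_u(f_{n_{j(i)}}f_{n_i}\xi, f_{n_{j(i)}}\xi)+d_u(f_{n_{j(i)}}\xi,\xi)=d_u(f_{n_i}\xi,\xi)+d_u(f_{n_{j(i)}}\xi,\xi).
\]
Both terms tend to $0$, so $g_i\xi\duto\xi$, and each $g_i$ is hyperbolic, which proves the lemma.

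The main point needing care is the first step. Corollary~\ref{conicalOrBoundedParabolic}(2) gives directly that $\xi$ is not $d_u$-isolated in $\Lambda G$, but that does not by itself produce approximating points \emph{inside the orbit} $G\xi$. I would therefore use the equivalence with (3), non-discreteness of $G\xi$, and translate by $h^{-1}$ as described. I would also check that the lemma needs no non-constancy or distinctness requirement on the $g_i$; if one is wanted, hyperbolicity gives $g_i\xi\ne\xi$, because $\xi$ is non-conical and so is not a fixed point of a hyperbolic element.
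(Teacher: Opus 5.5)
Your proof is correct and follows essentially the same route as the paper. You take $f_n$ with $f_n\xi\ne\xi$ and $f_n\xi\duto\xi$ from Corollary~\ref{conicalOrBoundedParabolic}, pass it through Lemma~\ref{producingShortHyperbolicElement}, and set $g_i=f_{n_{j(i)}}f_{n_i}$ with $j(i)$ large. You are also slightly more careful than the paper in two places. First, you explicitly justify $f_n\xi\ne\xi$ via non-discreteness of the orbit and translation by $h^{-1}$. Second, your triangle inequality goes through the correct intermediate point $f_{n_{j(i)}}\xi$. The paper's displayed chain instead goes through $h_{n_i}\xi$ and asserts $d_u(h_{n_j}h_{n_i}\xi,h_{n_i}\xi)=d_u(h_{n_j}\xi,\xi)$, which does not follow from $d_u$-invariance, although the final bound is the same.
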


\begin{proof}
    By Corollary~\ref{conicalOrBoundedParabolic}, there exists a sequence $\{h_n\}$ in $G$ such that $h_n\xi\duto \xi$.
    By Lemma~\ref{producingShortHyperbolicElement}, there is a subsequence $\{h_{n_i}\}$ such that, for each fixed $i\in \mathbb{N}$, the element $h_{n_j}h_{n_i}$ is hyperbolic for all sufficiently large $j\in \mathbb{N}$.
    Hence we can find a sequence $\{g_i\}$ of the form $g_i=h_{n_j}h_{n_i}$ such that $n_j>n_i$ and each $g_i$ is hyperbolic.
    Therefore, 
    \begin{align*}
        d_{u}(g_i\xi, \xi)&= d_{u}(h_{n_j}h_{n_i}\xi, \xi)\\
        &\le d_{u}(h_{n_j}h_{n_i}\xi, h_{n_i}\xi)+d_u(h_{n_i}\xi, \xi)\\
        &=d_{u}(h_{n_j}\xi, \xi)+d_{u}(h_{n_i}\xi, \xi)\\
        &\to 0.
    \end{align*}
\end{proof}

The preceding proposition enables us to approximate every non-expansive point by fixed points of hyperbolic elements. We now exploit this construction to establish a global property of the $d_u$-topology. In particular, we show that expansive points are $d_u$-dense.

\begin{theorem}[=Theorem~\ref{expansiveDense}]
    The set $\Lambda^{e}G$ of conical points and bounded parabolic points is $d_u$-dense in $\Lambda G$.
\end{theorem}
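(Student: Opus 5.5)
Fix a non-expansive point $\xi\in\Lambda^{ne}G$ and $\varepsilon>0$. The goal is a point of $\Lambda^eG$ within $d_u$-distance $\varepsilon$ of $\xi$; since $\Lambda^eG$ contains all points of $\Lambda G$ that are not in $\Lambda^{ne}G$, this suffices. The natural candidates are fixed points of hyperbolic elements, which are conical. By Lemma~\ref{hyperbolicElementsGconverge} we choose hyperbolic $g_n$ with $g_n\xi\duto\xi$. We then aim to show that $d_u(g_n^+,\xi)\to 0$, or at least $d_u(g_n^-,\xi)\to0$, along a subsequence.

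The key estimate comes from iterating. Set $\delta_n=d_u(g_n\xi,\xi)$. Since $G$ acts by $d_u$-isometries,
\[
d_u(g_n^k\xi,g_n^{k+1}\xi)=\delta_n \quad\text{for all } k\in\mathbb{Z}.
\]
This alone does not bound $d_u(g_n^k\xi,\xi)$ uniformly in $k$, because the errors accumulate. I would instead use the limiting behaviour: $g_n^k\xi\to g_n^+$ in $M$ as $k\to\infty$. The question is whether this convergence also holds in $d_u$.

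A cleaner route avoids this by using Theorem~\ref{dGComplete} and a Cantor-type construction. Choose hyperbolic $g_1$ with $d_u(g_1\xi,\xi)<\varepsilon/2$. Then choose $g_2$ with $d_u(g_2\xi,\xi)$ so small that $d_u(g_1g_2\xi,g_1\xi)<\varepsilon/4$; this is the same quantity, since $d_u$ is $G$-invariant. Continue in this way and set $x_m=g_1g_2\cdots g_m\xi$. Then $\{x_m\}$ is $d_u$-Cauchy, with $d_u(x_m,\xi)<\varepsilon$, and by completeness it converges to some point $x$. The task is to arrange the choices so that $x$ is expansive.

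The main obstacle is controlling the limit type. If $x$ were non-expansive, the construction would give no contradiction by itself. My first attempt is instead the direct route: show that $d_u(g^+,\xi)$ is small whenever $d_u(g\xi,\xi)$ is small and $g$ is hyperbolic with $g^\pm$ near $\xi$. Lemma~\ref{producingShortHyperbolicElement} gives $g=h_jh_i$ with $h_i\xi\duto\xi$. For $j\gg i$, the element $h_j$ maps $h_i\overline U$ into a small $V$ around $\xi$, and $g^+\in\bigcap_k g^k\overline U$.

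For any $f\in G$ we then estimate $d(fg^+,f\xi)$. Write
\[
g^+=\lim_k g^k\eta \quad\text{for } \eta\in U,
\]
and use the triangle inequality through $g^k\xi$. We have $d(fg^k\xi,f\xi)\le d_u(g^k\xi,\xi)$, and this grows in $k$ only if the nested sets fail to shrink uniformly.

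The plan is to handle this by replacing the pair $(\xi, g^+)$ with a sequence of pairs, in order to show $d_u(g_n^+,\xi)\to0$. Argue by contradiction. Suppose there are $f_n$ with $d(f_ng_n^+,f_n\xi)\ge c>0$, and pass to convergence subsequences of $\{f_n\}$ and $\{f_ng_nf_n^{-1}\}$. Lemma~\ref{hyperbolicElementFixPointConvergence} and Proposition~\ref{generalDuConvergence}(2) then pin down the attracting and repelling points of these sequences. This forces $f_ng_n^+$ and $f_n\xi$ to converge to the same point, which contradicts $d(f_ng_n^+,f_n\xi)\ge c$. This last step is the one I expect to be delicate.
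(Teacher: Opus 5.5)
Your final step has a genuine gap. To apply Lemma~\ref{hyperbolicElementFixPointConvergence} to the conjugates $k_n=f_ng_nf_n^{-1}$, you need the two fixed points $k_n^{+}=f_ng_n^{+}$ and $k_n^{-}=f_ng_n^{-}$ to converge to the \emph{same} point, and nothing in your plan provides this. Proposition~\ref{generalDuConvergence}(2) concerns only the sequence $\{g_n\}$ itself. It gives $g_n^{\pm}\to\xi$ in the original topology, and that information is lost once you apply the varying elements $f_n$.

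If you run your argument with only the tools you cite, you get the following. Pass to subsequences with $f_n\xi\to\zeta$ and $f_ng_n^{+}\to\eta\neq\zeta$. Then $\eta$ must be the attracting point of $\{k_n\}$. Since $d(k_nf_n\xi,f_n\xi)\le d_u(g_n\xi,\xi)\to0$, the point $\zeta$ is forced to be the repelling point, i.e.\ $f_ng_n^{-}\to\zeta$. The configuration in which $f_n\xi$ and $f_ng_n^{-}$ tend to $\zeta$ while $f_ng_n^{+}$ tends to $\eta\ne\zeta$ is consistent with everything the cited lemmas say. So no contradiction is forced.

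The missing idea is that the $g_n$ form an escaping sequence: $d_u(g_n^{+},g_n^{-})\to0$. The paper obtains this from the translation-length machinery of Section~\ref{chapter:escaping}. First, $\tau_u(g_n)\le d_u(\xi,g_n\xi)\to0$. If infinitely many $g_n$ had $d_u(g_n^{+},g_n^{-})>\varepsilon$, that infinite set would have the $\varepsilon$-NEHE property. Corollary~\ref{translationLengthLowerBound} would then bound $\tau_u$ below on it, a contradiction.

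Alternatively, you can kill the bad configuration directly by properness of the action on $\Theta^3(M)$. The points $f_ng_n^{j}\xi$, $j\ge0$, form a chain with $d$-steps at most $d_u(g_n\xi,\xi)$, running from near $\zeta$ to near $\eta$. So the chain contains a point $y_n$ such that both $(k_n^{-},y_n,k_n^{+})$ and its $k_n$-image lie in a fixed compact subset of $\Theta^3(M)$. This contradicts properness for infinitely many distinct $k_n$; that is exactly what the finiteness lemma behind that corollary encodes.

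Once $d_u(g_n^{+},g_n^{-})\to0$ is available, both $f_ng_n^{\pm}\to\eta$ and Lemma~\ref{hyperbolicElementFixPointConvergence} applies. It gives $f_ng_n\xi\to\eta$, contradicting $f_ng_n\xi\to\zeta$. This is precisely the paper's proof. Your final plan is right in outline, but it omits its essential input.
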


\begin{proof}
Let $\xi\in \Lambda G$ be a non-expansive point.
By Lemma~\ref{hyperbolicElementsGconverge}, there exists a sequence $\{g_n\}$ of hyperbolic elements in $G$ such that $g_n\xi\duto\xi$.

Let $A=\{g_n:n\in \mathbb{N}\}$.
By the definition of the $d_u$-translation length, $\tau_u(g_n)\le d_u(\xi,g_n\xi)\to 0$.
By Corollary~\ref{translationLengthLowerBound}, any infinite subset of $A$ cannot have $\varepsilon$-NEHE property for any $\varepsilon>0$, i.e. $d_u(g_n^+,g_n^-)\to 0$.

\textit{Claim}: $g_n^+\duto\xi$.

Suppose, on the contrary, that $d_u(g_{n_i}^+,\xi)>\varepsilon$ for some subsequence $\{g_{n_i}\}$ and some $\varepsilon>0$.
By definition, there exists a sequence $\{h_i\}$ in $G$ such that $d(h_ig_{n_i}^+,h_i\xi)>\varepsilon$.
By passing to a subsequence, we may assume that $h_ig_{n_i}^+\to\eta$ and $h_i\xi\to\zeta$ for some distinct points $\eta,\zeta\in M$.
Since $d_u(g_{n_i}^+,g_{n_i}^-)\to 0$, the sequence $\{h_ig_{n_i}^-\}$ also converges to $\eta$.

Consider the sequence $\{h_ig_{n_i}h_i^{-1}\}$ of hyperbolic elements in $G$.
Note that $(h_ig_{n_i}h_i^{-1})^+=h_ig_{n_i}^+\to\eta$ and $(h_ig_{n_i}h_i^{-1})^-=h_ig_{n_i}^-\to\eta$.
By Lemma~\ref{hyperbolicElementFixPointConvergence}, $\{h_ig_{n_i}h_i^{-1}\}$ is a convergence sequence whose attracting and repelling points are both $\eta$.

Since $\lim\limits_{i\to\infty}h_i\xi=\zeta\ne\eta$, it follows that
$h_ig_{n_i}\xi=(h_ig_{n_i}h_i^{-1})(h_i\xi)\to\eta$.
However, $d(h_ig_{n_i}\xi,h_i\xi)\le d_u(g_{n_i}\xi,\xi)\to 0$, which implies that $h_ig_{n_i}\xi\to\zeta$, a contradiction.

Since $g_n^+$ is a conical point for every $n\in \mathbb{N}$, we conclude that $\xi$ belongs to the $G$-closure of $\Lambda^{e}G$.
\end{proof}

\bibliographystyle{amsalpha}
\bibliography{bibfile}

\end{document}